\newtheorem{lemma}{Lemma}[section]
\newtheorem{definition}{Definition}[section]
\newtheorem{theorem}{Theorem}[section]
\newtheorem{corollary}{Corollary}[section]
\newtheorem{proposition}{Proposition}[section]
\newtheorem{remark}{Remark}[section]
\newtheorem{example}{Example}[section]
\begin{document}
\title{\Large{\sf{\sf{ Trust--Region Problems with Linear Inequality Constraints: Exact SDP Relaxation, Global Optimality  and Robust Optimization\thanks{The authors are grateful to the referees for their valuable suggestions and helpful comments which have contributed to the final preparation of the paper. Research was partially supported by a grant from the Australian Research Council.}}}}}

\date{\today}

\author{
 \ \textsc{V. Jeyakumar}\thanks{Department of Applied Mathematics, University of New South Wales,
Sydney 2052, Australia. E-mail: v.jeyakumar@unsw.edu.au}\ \ and \  \textsc{G. Y.
Li}\thanks{Department of Applied Mathematics, University of New South
Wales, Sydney 2052, Australia. E-mail: g.li@unsw.edu.au} }
\date{Revised Version: \today}
\maketitle


\maketitle

\begin{abstract}
The trust-region problem, which minimizes a nonconvex quadratic function over a ball, is a key subproblem in trust-region methods for solving nonlinear optimization problems. It enjoys many attractive properties such as an exact semi-definite linear programming relaxation (SDP-relaxation) and strong duality. Unfortunately, such properties do not, in general,
 hold for an extended trust-region problem having extra linear constraints. This paper shows that two useful and powerful features of the classical
 trust-region problem continue to hold for an extended trust-region problem with linear inequality constraints under a new dimension condition. First, we establish
  that the class of extended trust-region problems has an exact SDP-relaxation, which holds without the Slater constraint qualification. This is achieved
 by proving that a system of quadratic and affine functions involved in the  model satisfies a range-convexity whenever the dimension condition is fulfilled.
 Second, we show that the dimension condition together with the Slater condition ensures that a set of combined first and second-order Lagrange
 multiplier conditions is necessary and sufficient for global optimality of the extended trust-region problem and consequently for strong duality. Through simple examples we also provide an insightful account of our development from SDP-relaxation to strong duality. Finally, we show that the dimension condition
is easily satisfied for the extended trust-region model that arises from the reformulation of a robust least squares problem (LSP) as well as a robust second order cone programming model problem (SOCP) as an equivalent semi-definite linear programming problem. This leads us to conclude that, under mild assumptions, solving a robust (LSP) or (SOCP) under matrix-norm uncertainty or polyhedral uncertainty is equivalent to solving a semi-definite linear programming problem and so, their solutions can be validated in polynomial time.
\medskip



\end{abstract}
\newpage
\section{Introduction}
Consider the extended trust-region model problem with linear inequality constraints
\begin{eqnarray*}
(P) & \min_{x \in \mathbb{R}^n}  &  x^TAx+a^Tx \\
& \mbox{ s.t. } & \|x-x_0\|^2 \le \alpha, \\
& & b_i^Tx \le \beta_i, \, i=1,\ldots,m,
\end{eqnarray*}
where $A$ is a symmetric $(n \times n)$ matrix, $a,b_i,x_0 \in \mathbb{R}^n$ and $\alpha,\beta_i \in \mathbb{R}, \alpha > 0$, $i=1,\ldots,m$. Model problems of this form arise from the application of the
trust region method for solving constrained optimization problems \cite{trust0}, such as nonlinear programming problems with linear inequality constraints, nonlinear optimization problems with discrete variables \cite{kurt,pardalos} (see Section 2) and robust optimization problems \cite{Bert-survey,robust_book} under matrix norm \cite{bert} or polyhedral uncertainty \cite{robust_book,jl_siam,jl_ORL2} (see Section 5). The model (P) with a single linear inequality constraint, where $m=1$ and $x_0=0$, has recently been examined in the literature (see \cite{kurt,beck-eldar} and other references therein).
\medskip

In the special case of (P) where $(b_i, \beta_i)=(0, 0)$, it is the well-known trust-region model, and it has been extensively studied from both theoretical and algorithmic points of view \cite{trust-1,Henry,powel,Yuan_MP}. The classical trust-region problem enjoys exact semi-definite programming relaxation (SDP-relaxation) and admits strong duality. Moreover, its solution can be found by solving a dual Lagrangian system. Unfortunately, these results are, in general, no longer true for our extended trust-region model (P). Indeed, even in the simplest case of (P) with a single linear inequality constraint, it has been shown that the SDP-relaxation is not exact (see \cite{beck-eldar,Shuzhong} and other references therein). However, in the case of single inequality constraint, exact SDP-relaxation and strong duality hold under a dimension condition (see \cite{beck-eldar} and Corollary 4.2 in Section 4).

In this paper, we make the following contributions which extend the attractive features of the classical trust-region model to our extended trust-region model (P) under a new dimension condition:
\begin{itemize}

\item[{\bf (i)}] Exploiting a hidden convexity property of the extended trust-region system of (P), we establish that the SDP-relaxation of our extended trust-region problems (P)
 is exact whenever a dimension condition is fulfilled.

The dimension condition requires that the number of inequalities must be strictly less than the multiplicity of the minimum eigenvalue of the matrix $A$. It guarantees a joint-range-convexity for the extended trust-region system of (P).

The exact SDP-relaxation is derived without the standard Slater condition. For related  exact relaxation result for problems involving uniform quadratic systems (see \cite{Beck0} and other reference therein).

\item[{\bf (ii)}] We present a necessary and sufficient condition for global optimality for our model problem (P). Consequently, we derive strong duality between (P) and its Lagrangian dual problem under the Slater condition. Also, we obtain two forms of S-lemma for extended trust-region systems. In the case of (P) with two linear (bound) constraints our result provides a more general dimension condition than the corresponding condition, given recently in \cite{beck-eldar}.

\item[{\bf (iii)}] Under suitable, but commonly used, uncertainty sets of robust optimization, we show that the dimension condition
is easily satisfied for our extended trust-region model that arises from the reformulation of a robust least squares model problem (LSP) as well as a second order cone programming model problem (SOCP) as a semi-definite linear programming problem. As a result, we establish a complete characterization of the solution of a robust (LSP) and a robust (SOCP) in terms of the solution of a semi-definite linear programming problem.

\end{itemize}

The significance of our contributions is that:
\begin{itemize}
\item[{\bf (i)}] Our dimension condition, expressed in terms of original data, not only reveals a hidden convexity of the extended trust-region problems but also allows direct applications to solving robust optimization problems such as the robust (LSP) and (SOCP) models. These models are increasingly becoming the models of choice for efficiently solving many classes of hard problems by relaxation or reformulation techniques \cite{Second_order,bn,robust_book}.

\item[{\bf (ii)}] Our results show that a worst-case solution of a least-squares problems or a second-order cone programming problem in the face of data uncertainty, especially in the case of a matrix-norm uncertainty or a polyhedral uncertainty, can be found by solving a semi-definite linear programming problem.

\item[{\bf (iii)}] Our approach suggests further extensions of global optimality, strong duality and exact SDP-relaxation results to broad classes of extended trust-region models with (uniformly) convex quadratic constraints by way of examining joint-range convexity properties of the corresponding systems (see Section 6).

\end{itemize}

The outline of the paper is as follows. In Section 2, we introduce the dimension condition and establish a joint-range convexity property. In Section 3 we derive exact relaxation results for (P) and  illustrate the results with numerical examples. In Section 4, we show that the dimension condition together with the Slater condition ensures that a combined first and second-order Lagrange multiplier condition is necessary and sufficient for global optimality of (P) and guarantees strong duality between (P) and its Lagrangian dual. In Section 5, we  present an application of strong duality to S-lemma and consequently to robust optimization problems \cite{robust_book}. In Section 6, we show how our dimension condition can be extended to obtain corresponding exact relaxation and strong duality results for  trust-regions problems with certain convex quadratic inequalities. Finally, in Appendix, for the sake of self-containment, we describe some useful technical results  that are related to non-convex quadratic 
systems and robust optimization.

\section{Hidden Convexity of Extended Trust Regions}
In this section, we derive an important hidden convexity property of extended trust-region quadratic systems which will play a key role
in our study of exact relaxation and strong duality later on.

We begin by fixing the notation and definitions that will be used later in the paper. The
real line is denoted by $\mathbb{R}$ and the $n$-dimensional
real Euclidean space is denoted by $\mathbb{R}^n$. The set of all
non-negative vectors of $\mathbb{R}^n$ is denoted by
$\mathbb{R}^n_{+}.$ The space of all $(n \times
n)$ symmetric real matrices  is denoted by $S^{n\times n}$. The $(n \times n)$
identity matrix is denoted by $I_n$. The notation $A \succeq B$
means that the matrix $A-B$ is positive semi-definite. Moreover, the
notation $A \succ B$ means the matrix $A-B$ is positive definite. The set consists of all $n\times n$ positive semidefinite matrices is denoted by $S^n_+$. Let $A,B \in S^{n\times n}$. The (trace) inner product of $A$
and $B$ is defined by $A \cdot
B=\sum_{i=1}^{n}\sum_{j=1}^{n}a_{ij}b_{ji}$, where $a_{ij}$ is
the $(i,j)$ element of $A$ and $b_{ji}$ is the $(j,i)$ element of
$B$. A useful fact about the trace inner product is that $A \cdot
(xx^T)=x^TAx$ for all $x \in \mathbb{R}^n$ and $A \in S^{n\times n}$.
For a matrix $A \in S^{n \times n}$, ${\rm Ker}(A):=\{d \in \mathbb{R}^n: Ad=0\}$. For a subspace $L$, we use ${\rm dim}\, L$ to denote the dimension of $L$.

As in \cite{Beck0,beck-eldar}, the study of exact relaxation and strong duality requires the examination of topological and geometrical properties of the set
$$U(f,g_0,g_1,\ldots,g_m):=\{(f(x),g_0(x),g_1(x),\ldots,g_m(x)):x \in \mathbb{R}^n\}+\mathbb{R}_+^{m+2},$$ where
$f(x)=x^TAx+a^Tx+\gamma$, $g_0(x)=\|x-x_0\|^2-\alpha$ and $g_i(x)=b_i^Tx-\beta_i$, $i=1,\ldots,m$, $A \in S^{n \times n}$, $a,x_0,b_i \in \mathbb{R}^n$ and $\gamma,\alpha,\beta_i \in \mathbb{R}$, $i=1,\ldots,m$.

We note that the range set $U(f,g_0,g_1,\ldots,g_m)$ is the sum of the nonnegative orthant and the image of the quadratic mapping $\{(f(x),g_0(x),g_1(x),\ldots,g_m(x)):x \in \mathbb{R}^n\}$. Hence,
the range set $U(f,g_0,g_1,\ldots,g_m)$ is convex whenever the image of the quadratic mapping  is convex.
It is known that the joint-range convexity of quadratic mappings has a close relationship with strong duality of an associated optimization problem. For example,
Fradkov and Yakubovich \cite{FY,Yakubovich} used convexity of the joint-range $\{(f(x),g_0(x)):x \in \mathbb{R}^n\}$ in the case of homogeneous (not necessarily convex) quadratic functions $f,g_0$ (cf. \cite{Dine}) to show that strong duality holds for quadratic optimization problem with single quadratic constraint, under the Slater condition.

Recently, Polyak \cite{Polyak} established a strong duality result for homogenous nonconvex quadratic problems involving two quadratic
constraints by showing that the joint-range of three homogenous quadratic functions is convex under a positive definiteness condition. On the other hand, the image of  three nonhomogeneous quadratic function is, in general, not convex.   See \cite{Beck0,S_lemma,Polyak} for more detailed discussion for joint-range convexity of quadratic functions.

We begin by showing that the set $U(f,g_0,g_1,\ldots,g_m)$ is always closed.

\begin{proposition}\label{prop:2.1}
Let $f(x)=x^TAx+a^Tx+\gamma$, $g_0(x)=\|x-x_0\|^2-\alpha$ and $g_i(x)=b_i^Tx-\beta_i$, $i=1,\ldots,m$, $A \in S^{n \times n}$, $a,x_0,b_i \in \mathbb{R}^n$ and $\gamma,\alpha,\beta_i \in \mathbb{R}$, $i=1,\ldots,m$. Then
$U(f,g_0,g_1,\ldots,g_m)$ is  closed.
\end{proposition}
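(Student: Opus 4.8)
The plan is to verify that $U:=U(f,g_0,g_1,\ldots,g_m)$ is sequentially closed directly from the definition. I would start with an arbitrary sequence $(z_k)_k\subseteq U$ converging to some $z=(z^0,z^1,\ldots,z^{m+1})\in\mathbb{R}^{m+2}$, and aim to produce $\bar x\in\mathbb{R}^n$ and $s\in\mathbb{R}_+^{m+2}$ with $z=\big(f(\bar x),g_0(\bar x),g_1(\bar x),\ldots,g_m(\bar x)\big)+s$. By the very definition of $U$, for each $k$ there are $x_k\in\mathbb{R}^n$ and $s_k=(s_k^0,\ldots,s_k^{m+1})\in\mathbb{R}_+^{m+2}$ with
\[
z_k=\big(f(x_k)+s_k^0,\ g_0(x_k)+s_k^1,\ g_1(x_k)+s_k^2,\ \ldots,\ g_m(x_k)+s_k^{m+1}\big).
\]

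The crucial step, and the only place where the structure of the data is used, is to show that $(x_k)_k$ is bounded. Since $(z_k)_k$ converges, its second coordinate $z_k^1=g_0(x_k)+s_k^1$ is bounded above by some constant $M$; as $s_k^1\ge 0$ this forces $g_0(x_k)=\|x_k-x_0\|^2-\alpha\le M$, hence $\|x_k-x_0\|^2\le M+\alpha$, so $(x_k)_k$ lies in a bounded subset of $\mathbb{R}^n$. This is exactly where the coercivity (uniform convexity) of the quadratic $g_0$ is essential; if that coordinate carried a general indefinite quadratic the set need not be closed, so I would flag this as the heart of the argument.

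Having bounded $(x_k)_k$, I would pass to a subsequence with $x_k\to\bar x$. Continuity of $f$ and of each $g_i$ gives $f(x_k)\to f(\bar x)$ and $g_i(x_k)\to g_i(\bar x)$, whence each slack converges: $s_k^0=z_k^0-f(x_k)\to z^0-f(\bar x)$, $s_k^1\to z^1-g_0(\bar x)$, and $s_k^{i+1}\to z^{i+1}-g_i(\bar x)$ for $i=1,\ldots,m$. Since $s_k^j\ge 0$ for all $j$ and $\mathbb{R}_+^{m+2}$ is closed, the limiting vector $s:=\big(z^0-f(\bar x),\,z^1-g_0(\bar x),\,\ldots,\,z^{m+1}-g_m(\bar x)\big)$ lies in $\mathbb{R}_+^{m+2}$, and by construction $z=\big(f(\bar x),g_0(\bar x),\ldots,g_m(\bar x)\big)+s\in U$. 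This shows $U$ is closed. Apart from the boundedness extraction, the remaining steps are routine continuity and the closedness of the orthant, so I do not expect any further obstacle.
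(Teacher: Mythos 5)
Your proof is correct and follows essentially the same route as the paper's: both arguments extract boundedness of the sequence $(x_k)$ from the $g_0$-coordinate (the ball constraint), pass to a convergent subsequence, and conclude by continuity and closedness of the nonnegative orthant. No issues.
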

\begin{proof}
Let $(r^k,s_0^k,s_1^k,\ldots,s_m^k) \in U(f,g_0,g_1,\ldots,g_m)$ with $$(r^k,s_0^k,s_1^k,\ldots,s_m^k) \rightarrow (r,s_0,s_1,\ldots,s_m).$$
By the definition, for each $k$, there exists $x^k \in \mathbb{R}^n$
\begin{equation}\label{eq:00}
f(x^k) \le r^k, \ \|x^k-x_0\|^2 \le \alpha+s_0^{k}, \ b_1^Tx^k \le \beta_1+s_1^k \,,  \ldots, \, b_m^T x^k \le \beta_m+s_m^k.
\end{equation}
This implies that $x^k$ is bounded, and so, by passing to subsequences, we may assume that $x_k \rightarrow x$. Then, passing limits in (\ref{eq:00}), we have
\[
f(x) \le r, \|x-x_0\|^2 \le \alpha+s_0, b_1^Tx \le \beta_1+s_1,\ldots,b_m^Tx \le \beta_m+s_m.
\]
That is to say, $(r,s_0,s_1,\ldots,s_m) \in U(f,g_0,g_1,\ldots,g_m)$. So, $U(f,g_0,g_1,\ldots,g_m)$ is closed.
\end{proof}
The following simple one-dimensional example shows that the set $U(f,g_0,g_1,\ldots,g_m)$ is, in general, not a convex set.
\begin{example} \label{ex:0} {\bf (Nonconvexity of $U(f,g_0,g_1,\ldots,g_m)$)}
For (P), let $n=1$, $m=1$, $f(x)=x-x^2$, $g_0(x)=x^2-1$ and $g_1(x)=-x$. Then, $f(x)=x^TAx+a^Tx+r$ with $A=-1$, $a=1$ and $r=0$, $g_0(x)=\|x-x_0\|^2-\alpha$ with $x_0=0$, $\alpha=1$ and $g_1(x)=b_1^Tx-\beta_1$  with $b_1=1$ and  $\beta_1=0$.

Then, the set $U(f,g_0,g_1)$ is not a convex set. To see this, note that $f(0)=0$, $g_0(0)=-1$ and $g_1(0)=0$, and $f(1)=0$, $g_0(1)=0$ and $g_1(1)=-1$. So, $(0,-1,0) \in U(f,g_0,g_1)$ and $(0,0,-1) \in U(f,g_0,g_1)$. However, the mid point $(0,-\frac{1}{2},-\frac{1}{2}) \notin U(f,g_0,g_1)$.
Otherwise, there exists $x \in \mathbb{R}$ such that
\[
x-x^2 \le 0,  x^2-1 \le -\frac{1}{2} \mbox{ and } -x \le -\frac{1}{2}.
\]
It is easy to check that the above inequality system has no solution. This is a contradiction, and hence $(0,-\frac{1}{2},-\frac{1}{2}) \notin U(f,g_0,g_1)$. Thus,  $U(f,g_0,g_1)$ is not convex.

%
\end{example}

The following dimension condition plays a key role in the rest of the paper.
Recall that, for a matrix $A\in S^n$, $\lambda_{\rm min}(A)$ denotes the smallest eigenvalue of $A$.
\begin{definition}{\bf (Dimension condition)} Consider the system of functions
$f(x)=x^TAx+a^Tx+\gamma$, $g_0(x)=\|x-x_0\|^2-\alpha$ and $g_i(x)=b_i^Tx-\beta_i$, $i=1,\ldots,m$, where $A \in S^{n \times n}$, $a,x_0,b_i \in \mathbb{R}^n$ and $\gamma,\alpha,\beta_i \in \mathbb{R}$. Let
${\rm dim}~{\rm span}\{b_1,\ldots,b_m\}=s$, $s \le n$. Then, we say that the dimension condition holds for the system whenever
\begin{equation}\label{eq:DC}
 {\rm dim}~{\rm Ker}(A -\lambda_{\rm min}(A)I_n) \ge s +1.
\end{equation}
\end{definition}
In other words, the dimension condition states that the multiplicity of the minimum
eigenvalue of $A$ is at least $s+1$.

 Recall that the optimal value function $h:\mathbb{R}^{m+1}\rightarrow \mathbb{R} \cup \{+ \infty\}$ of (P) is given by
\begin{eqnarray*}
 & & h(r,s_1,\ldots,s_m) \\
&= & \left\{ \begin{array}{ccc}
\displaystyle \min_{x \in \mathbb{R}^n}\{f(x): \|x-x_0\|^2 \le \alpha+r, \, b_i^Tx \le \beta+s_i,\, i=1,\ldots,m\}, & (r,s_1,\ldots,s_m) \in D, & \\
+\infty, & \mbox{otherwise,} &
            \end{array}
\right.
\end{eqnarray*}
where $D=\{(r,s_1,\ldots,s_m): \|x-x_0\|^2 \le \alpha+r, b_i^Tx \le \beta_i+s_i \mbox{ for some } x \in \mathbb{R}^n\}$.

\begin{theorem}{\bf (Dimension condition revealing hidden convexity)}\label{prop:2.2}
Let $f(x)=x^TAx+a^Tx+\gamma$, $g_0(x)=\|x-x_0\|^2-\alpha$ and $g_i(x)=b_i^Tx-\beta_i$, $i=1,\ldots,m$, $A \in S^{n \times n}$, $a,x_0,b_i \in \mathbb{R}^n$ and $\gamma,\alpha,\beta_i \in \mathbb{R}$.
Suppose that the dimension condition (\ref{eq:DC}) is satisfied.
Then, $$U(f,g_0,g_1,\ldots,g_m):=\{(f(x),g_0(x),g_1(x),\ldots,g_m(x)):x \in \mathbb{R}^n\}+\mathbb{R}_+^{m+2}$$ is a convex set.
\end{theorem}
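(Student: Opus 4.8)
The plan is to reduce convexity of $U:=U(f,g_0,g_1,\ldots,g_m)$ to a statement about pairs of points. It suffices to prove that for all $x,y\in\mathbb{R}^n$ and $\lambda\in(0,1)$ there is $z\in\mathbb{R}^n$ with $f(z)\le\lambda f(x)+(1-\lambda)f(y)$, $g_0(z)\le\lambda g_0(x)+(1-\lambda)g_0(y)$ and $g_i(z)\le\lambda g_i(x)+(1-\lambda)g_i(y)$ for $i=1,\ldots,m$; the cases $\lambda\in\{0,1\}$ being trivial. Indeed, given two points of $U$, written as $(f(x),g_0(x),\ldots,g_m(x))+e$ and $(f(y),g_0(y),\ldots,g_m(y))+e'$ with $e,e'\in\mathbb{R}_+^{m+2}$, their $\lambda$-convex combination then dominates $(f(z),g_0(z),\ldots,g_m(z))$ componentwise and hence lies in $U$. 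Throughout I would write $\bar x=\lambda x+(1-\lambda)y$ and $d=x-y$, and use the two elementary identities $\lambda f(x)+(1-\lambda)f(y)-f(\bar x)=\lambda(1-\lambda)\,d^TAd$ and $\lambda g_0(x)+(1-\lambda)g_0(y)-g_0(\bar x)=\lambda(1-\lambda)\|d\|^2$, together with the affineness of each $g_i$, which gives $\lambda g_i(x)+(1-\lambda)g_i(y)=g_i(\bar x)$.

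Then I would split on the sign of $d^TAd$. If $d^TAd\ge 0$, the choice $z=\bar x$ works immediately from the identities above (the $g_0$-inequality because $\|d\|^2\ge 0$). So suppose $d^TAd<0$; in particular $d\ne 0$ and $\mu:=\lambda_{\rm min}(A)<0$. This is where the dimension condition enters: since ${\rm dim}\,{\rm Ker}(A-\mu I_n)\ge s+1$ and ${\rm dim}\big(({\rm span}\{b_1,\ldots,b_m\})^\perp\big)=n-s$, the subspace $M:={\rm Ker}(A-\mu I_n)\cap({\rm span}\{b_1,\ldots,b_m\})^\perp$ has dimension at least $(s+1)+(n-s)-n=1$. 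I would fix a unit vector $v\in M$ and look for $z$ on the line $\bar x+\mathbb{R}v$.

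The point of this choice is that it collapses everything to two scalar quadratic inequalities. Since $b_i^Tv=0$, the affine constraints are matched with equality for every $t$. Since $Av=\mu v$ and $\|v\|=1$, a direct expansion gives $g_0(\bar x+tv)=g_0(\bar x)+2pt+t^2$ and $f(\bar x+tv)=f(\bar x)+qt+\mu t^2$, where $p=(\bar x-x_0)^Tv$ and $q=2\mu\bar x^Tv+a^Tv$. So it remains to find $t\in\mathbb{R}$ with $\psi(t):=t^2+2pt-\lambda(1-\lambda)\|d\|^2\le 0$ and $\phi(t):=\mu t^2+qt-\lambda(1-\lambda)\,d^TAd\le 0$. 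Here $\{t:\psi(t)\le 0\}$ is a nondegenerate closed interval $I=[t_1,t_2]$ with $t_1<0<t_2$ (because $\psi(0)=-\lambda(1-\lambda)\|d\|^2<0$ and $d\ne 0$), whose endpoints $t_1,t_2$ are the roots of $\psi$; while $\phi$, being strictly concave with $\phi(0)=-\lambda(1-\lambda)\,d^TAd>0$, has two real roots $\tau_-<0<\tau_+$ and satisfies $\{t:\phi(t)\le 0\}=(-\infty,\tau_-]\cup[\tau_+,\infty)$.

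The crux is therefore to show $I\not\subseteq(\tau_-,\tau_+)$, i.e. that $\phi$ cannot be strictly positive on all of $I$. The key observation is that the two ``defect'' quadratics differ by an affine function: a short computation yields $\phi(t)-\mu\psi(t)=\rho t-c'$, where $\rho=a^Tv+2\mu x_0^Tv$ and $c'=\lambda(1-\lambda)\,d^T(A-\mu I_n)d\ge 0$, the non-negativity coming from $A\succeq\mu I_n$. If $\phi>0$ throughout $I$, then evaluating at $t_1$ and $t_2$, where $\psi$ vanishes, gives $\phi(t_j)=\rho t_j-c'>0$, so $\rho t_1>c'\ge 0$ and $\rho t_2>c'\ge 0$; since $t_1<0<t_2$ this is impossible for any sign of $\rho$ (and impossible for $\rho=0$ too). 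Hence some $t^\ast\in I$ satisfies $\phi(t^\ast)\le 0$, and $z=\bar x+t^\ast v$ meets all the required inequalities. I expect the main obstacle to be exactly this step: recognizing that the eigenvector-and-orthogonality choice forced by the dimension condition makes $A$ act on the perturbation as the scalar $\mu$, so that $\phi-\mu\psi$ collapses to an affine function, after which $0\in I$ and $c'\ge 0$ force the contradiction. Everything else is routine expansion.
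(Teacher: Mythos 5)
Your proof is correct, and it takes a genuinely different route from the paper's. The paper identifies $U(f,g_0,\ldots,g_m)$ with the epigraph of the optimal value function $h(r,s_1,\ldots,s_m)$ and proves $h$ convex by rewriting the inner problem with the convexified objective $f(x)-\lambda_{\min}(A)\|x-x_0\|^2$ and showing, via a boundary-attainment argument, that this convex problem always has a minimizer on the sphere $\|x-x_0\|^2=\alpha+r$; the dimension condition enters there, through the same vector $v\in\big(\bigcap_i b_i^{\perp}\big)\cap{\rm Ker}(A-\lambda_{\min}(A)I_n)$ that you use. You instead verify convexity directly at the level of pairs of points: given $x,y$ you produce a witness $z$ on the line $\bar x+\mathbb{R}v$, which collapses the problem to two scalar quadratics $\psi,\phi$ in $t$, and the observation that $\phi-\mu\psi$ is affine (precisely because $v$ is a $\lambda_{\min}$-eigenvector, so $A$ acts as the scalar $\mu$ along the perturbation) forces the two solution sets to intersect. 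Your version is more elementary and self-contained: it avoids the value-function machinery, never needs attainment of minima, and isolates exactly where $A\succeq\lambda_{\min}(A)I_n$ and the dimension condition are used. What the paper's route buys in exchange is reusable structure: the identification $U={\rm epi}\,h$ and the convexity and lower semicontinuity of $h$ are invoked again verbatim in the proof of the exact SDP-relaxation theorem (Theorem 3.1) to get the zero duality gap, whereas from your argument one would have to extract those facts separately.
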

\begin{proof}

%

We first note that, if $A$ is positive semidefinite, then $f,g_i$, $i=0,1,\ldots,m$, are all convex functions. So, $U(f,g_0,g_1,\ldots,g_m)$ is always convex in this case. Therefore,
we may assume that $A$ is not positive semidefinite and hence $\lambda_{\rm min}(A)<0$.

{\bf [${\bf U(f,g_0,g_1,\ldots,g_m)=\mbox{epi}h}$]}.
Let $D=\{(r,s_1,\ldots,s_m): \|x-x_0\|^2 \le \alpha+r, b_i^Tx \le \beta_i+s_i \mbox{ for some } x \in \mathbb{R}^n\}$. Clearly, $D$ is a convex set.
Then, by the definition, we have
$U(f,g_0,g_1,\ldots,g_m) 
={\rm epi}h.$
\medskip

\noindent
{\bf [Convexity of the value function $h$]}. To see this,  we claim that, for each $(r,s_1,\ldots,s_m) \in D$, the minimization problem  $$\min_{x \in \mathbb{R}^n}\{f(x)-\lambda_{\min}(A)\|x-x_0\|^2: \|x-x_0\|^2 \le \alpha+r, b_i^Tx \le \beta_i+s_i\}$$ attains its minimum at some $\overline{x} \in \mathbb{R}^n$ with $\|\overline{x}-x_0\|^2=\alpha+r$ and $b_i^T\overline{x} \le \beta_i+s_i$. Granting this, we have
\begin{eqnarray*}
 & & \min_{x \in \mathbb{R}^n}\{f(x)-\lambda_{\min}(A)\|x-x_0\|^2: \|x-x_0\|^2 \le \alpha+r, \, b_i^Tx \le \beta_i+s_i\}\\
& = & f(\overline{x})- \lambda_{\min}(A) (\alpha+r)  \\
& \ge & \min_{x \in \mathbb{R}^n}\{f(x): \|x-x_0\|^2 \le \alpha+r, \, b_i^Tx \le \beta+s_i\}- \lambda_{\min}(A) (\alpha+r) \\
& = & \min_{x \in \mathbb{R}^n}\{f(x)- \lambda_{\min}(A) (\alpha+r) : \|x-x_0\|^2 \le \alpha+r, \, b_i^Tx \le \beta+s_i\} \\
& \ge &  \min_{x \in \mathbb{R}^n}\{f(x)-\lambda_{\min}(A)\|x-x_0\|^2: \|x-x_0\|^2 \le \alpha+r, \, b_i^Tx \le \beta_i+s_i\},
\end{eqnarray*}
where the last inequality follows by $\lambda_{\min}(A)<0$. This yields that
\begin{eqnarray*}
& & \min_{x \in \mathbb{R}^n}\{f(x): \|x-x_0\|^2 \le \alpha+r, \, b_i^Tx \le \beta+s_i,\, i=1,\ldots,m\} \\
& = & \min_{x \in \mathbb{R}^n}\{f(x)-\lambda_{\min}(A)\|x-x_0\|^2: \|x-x_0\|^2 \le \alpha+r, \, b_i^Tx \le \beta_i+s_i\}+\lambda_{\min}(A) (\alpha+r).
\end{eqnarray*}
Note that
\[
F(x):=f(x)-\lambda_{\min}(A)\|x-x_0\|^2= x^T(A-\lambda_{\rm min}(A)I_n)x+(a+2\lambda_{\min}(A)x_0)^Tx+(\gamma-\lambda_{\min}(A)\|x_0\|^2)
\]
is a convex function, and so, $(r,s_1,\ldots,s_m) \mapsto \min_{x \in \mathbb{R}^n}\{F(x): \|x-x_0\|^2 \le \alpha+r, b_i^Tx \le \beta_i+s_i\}$ is also convex. It follows that
\[
(r,s_1,\ldots,s_m) \mapsto   \min_{x \in \mathbb{R}^n}\{f(x): \|x-x_0\|^2 \le \alpha+r, \, b_i^Tx \le \beta+s_i,\, i=1,\ldots,m\}
\]
is convex. Therefore, $h$ is convex, and so, $U(f,g_0,g_1,\ldots,g_m)={\rm epi}h$ is a convex set.
\medskip

\noindent
{\bf [Attainment of minimizer on the sphere]} To see the claim, we proceed by the method of contradiction and suppose that any minimizer $x^*$ of
$$\min_{x \in \mathbb{R}^n}\{F(x): \|x-x_0\|^2 \le \alpha+r, b_i^Tx \le \beta_i+s_i\}$$
satisfy $\|x^*-x_0\|^2<\alpha+r$ and $b_i^Tx^* \le \beta_i+s_i$. We note that there exists $v \in \mathbb{R}^n \backslash\{0\}$ such that
\begin{equation}\label{eq:0}
v \in \big( \bigcap_{i=1}^mb_i^{\bot} \big) \cap {\rm Ker}(A -\lambda_{\rm min}(A)I_n).
\end{equation}
[Otherwise, $\big( \bigcap_{i=1}^mb_i^{\bot} \big) \cap {\rm Ker}(A -\lambda_{\rm min}(A)I_n)=\{0\}$. Recall from our dimension condition that ${\rm dim}{\rm Ker}(A -\lambda_{\rm min}(A)I_n) \ge s+1$ where $s$ is the dimension of ${\rm span}\{b_1,\ldots,b_m\}$.  Then, it follows from the dimension theorem that
\begin{eqnarray*}
n+1 & =&  (s+1) +(n-s) \\
& \le & {\rm dim}{\rm Ker}(A -\lambda_{\rm min}(A)I_n)+{\rm dim}(\bigcap_{i=1}^mb_i^{\bot}) \\
& = & {\rm dim}\big({\rm Ker}(A -\lambda_{\rm min}(A)I_n)+\bigcap_{i=1}^mb_i^{\bot} \big)+{\rm dim} \big( \bigcap_{i=1}^mb_i^{\bot}  \cap {\rm Ker}(A -\lambda_{\rm min}(A)I_n) \big)\\
& \le &  n,
\end{eqnarray*}
which is impossible, and hence (\ref{eq:0}) holds.]
Fix an arbitrary minimizer $x^*$ of
$\min_{x \in \mathbb{R}^n}\{F(x): \|x-x_0\|^2 \le \alpha+r, b_i^Tx \le \beta_i+s_i\}$. We now split the discussion into two cases:
Case 1, $(a+2\lambda_{\min}(A)x_0)^Tv=0$; \ Case 2, $(a+2\lambda_{\min}(A)x_0)^Tv \neq 0$.

Suppose that case 1 holds, i.e., $(a+2\lambda_{\min}(A)x_0)^Tv=0$. Consider $x(t)=x^*+tv$. As $\|x^*-x_0\|^2 < \alpha+r$, there exists $t_0>0$ such that $\|x(t_0)-x_0\|^2= \alpha+r$. Note that
$b_i^Tx(t_0)=b_i^T(x^*+t_0v) =b_i^Tx^* \le \beta_i+s_i$
 and \begin{eqnarray*}
F(x(t_0))&= & (x^*+t_0v)^T(A-\lambda_{\rm min}(A)I_n)(x^*+t_0v) \\
& & +(a+2\lambda_{\min}(A)x_0)^T(x^*+t_0v)+(\gamma-\lambda_{\rm min}(A)\|x_0\|^2) \\
&= & (x^*)^T(A+\lambda_{\rm min}(A)I_n)x^*+ (a+2\lambda_{\min}(A)x_0)^Tx^*+(\gamma-\lambda_{\rm min}(A)\|x_0\|^2)\\
& = & F(x^*).
 \end{eqnarray*}
This contradicts our assumption that any minimizer $x^*$ of $\min_{x \in \mathbb{R}^n}\{F(x): \|x-x_0\|^2 \le \alpha+r, b_i^Tx \le \beta_i+s_i\}$ satisfy $\|x^*-x_0\|^2<\alpha+r$.

Suppose that case 2 holds, i.e., $(a+2\lambda_{\min}(A)x_0)^Tv \neq 0$. By replacing $v$ with $-v$ if necessary, we may assume without loss of generality that
$(a+2\lambda_{\min}(A)^Tx_0)^Tv < 0.$
As $\|x^*-x_0\|^2 < \alpha+r$, there exists $t_0>0$ such that $\|x(t)-x_0\|^2 \le \alpha+r$ for all $t \in (0,t_0]$. Note that
$b_i^Tx(t_0)=b_i^T(x^*+t_0v) =b_i^Tx^* \le \beta_i+s_i$
 and \begin{eqnarray*}
F(x(t_0))&= & (x^*+t_0v)^T(A-\lambda_{\rm min}(A)I_n)(x^*+t_0v) \\
& & +(a+2\lambda_{\min}(A)x_0)^T(x^*+t_0v)+(\gamma-\lambda_{\rm min}(A)\|x_0\|^2) \\
& < & (x^*)^T(A-\lambda_{\rm min}(A)I_n)x^*+ (a+2\lambda_{\min}(A)x_0)^Tx^*+(\gamma-\lambda_{\rm min}(A)\|x_0\|^2)\\
& = & F(x^*).
 \end{eqnarray*}
This contradicts our assumption that $x^*$ is a minimizer.
\end{proof}
As a consequence, we deduce the hidden convexity of the well-known trust region system.
\begin{corollary}{\rm (Polyak \cite[Theorem 2.2]{Polyak})}
Let $f(x)=x^TAx+a^Tx+\gamma$ and $g_0(x)=\|x-x_0\|^2-\alpha$ where
$A \in S^{n \times n}$, $a,x_0 \in \mathbb{R}^n$ and $\gamma,\alpha \in \mathbb{R}$. Then, $U(f,g_0)$ is convex.
\end{corollary}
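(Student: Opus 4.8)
The plan is to obtain this as the special case $m=0$ of Theorem~\ref{prop:2.2}. When the model $(P)$ carries no linear inequality constraints, the family $\{b_1,\ldots,b_m\}$ is empty, so ${\rm span}\{b_1,\ldots,b_m\}=\{0\}$ and hence $s=0$ in the dimension condition. Consequently, condition~(\ref{eq:DC}) reduces to
\[
{\rm dim}\,{\rm Ker}\bigl(A-\lambda_{\min}(A)I_n\bigr)\ge 1,
\]
which is automatically valid: since $A$ is symmetric, $\lambda_{\min}(A)$ is a genuine eigenvalue of $A$, so its eigenspace ${\rm Ker}(A-\lambda_{\min}(A)I_n)$ contains a nonzero vector and therefore has dimension at least one. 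Thus the hypothesis of Theorem~\ref{prop:2.2} is met for free, and the theorem immediately yields that $U(f,g_0)$ is convex.

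I would present the argument in essentially two lines: first observe that $s=0$ in the absence of the $b_i$'s, then note that a symmetric matrix attains its smallest eigenvalue on a nontrivial eigenspace, so that the dimension condition is satisfied, and finally invoke Theorem~\ref{prop:2.2}. If a self-contained argument is preferred, one may instead re-run the proof of Theorem~\ref{prop:2.2} with the intersection $\bigl(\bigcap_{i=1}^m b_i^{\bot}\bigr)\cap{\rm Ker}(A-\lambda_{\min}(A)I_n)$ replaced simply by ${\rm Ker}(A-\lambda_{\min}(A)I_n)$; the dimension-counting step then becomes trivial because no hyperplanes $b_i^{\bot}$ intervene, and the vector $v$ is just any nonzero element of the minimal eigenspace of $A$.

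There is no genuine obstacle here; the only points worth stating explicitly are the convention ${\rm span}(\emptyset)=\{0\}$ and the elementary spectral fact that ${\rm Ker}(A-\lambda_{\min}(A)I_n)\ne\{0\}$ for symmetric $A$. Everything else is inherited from Theorem~\ref{prop:2.2}: the closedness of $U(f,g_0)$ (Proposition~\ref{prop:2.1}), the identification $U(f,g_0)={\rm epi}\,h$ with $h$ the optimal value function, and the convexity of $h$ obtained via the convex shift $F(x)=f(x)-\lambda_{\min}(A)\|x-x_0\|^2$ together with attainment of a minimizer on the sphere $\|x-x_0\|^2=\alpha+r$. This recovers Polyak's hidden-convexity result for the classical trust-region system as an immediate corollary of our dimension condition.
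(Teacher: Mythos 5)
Your proposal is correct and follows essentially the same route as the paper: the paper's proof sets $b_i=0$ so that ${\rm dim}\,{\rm span}\{b_1,\ldots,b_m\}=0$, notes that the dimension condition reduces to ${\rm dim}\,{\rm Ker}(A-\lambda_{\min}(A)I_n)\ge 1$, which always holds, and invokes Theorem~\ref{prop:2.2}. Your added remarks (the convention ${\rm span}(\emptyset)=\{0\}$ and the spectral fact that the minimal eigenspace is nontrivial) are harmless elaborations of the same argument.
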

\begin{proof}
Let $b_i=0$, $i=1,\ldots,m$ $($so, ${\rm dim}\, {\rm span}\{b_1,\ldots,b_m\}=0$ $)$.
Then the dimension condition (\ref{eq:DC}) reduces to
${\rm dim}{\rm Ker}(A -\lambda_{\rm min}(A)I_n) \ge 1$ which is always satisfied.
So, Theorem \ref{prop:2.2} shows the  $U(f,g_0)$ is always convex.
\end{proof}

\begin{remark}{\bf (Observations on the Dimension Condition)} {\rm
We observe that the dimension condition (\ref{eq:DC}) in the case of quadratic programs with one linear inequality constraint, i.e. $m=1$ in (\ref{eq:DC}), has been used to establish strong duality in \cite{beck-eldar}. This conclusion is deduced in Corollary 4.2 of Section 4.

Moreover, a close inspection of the proof of Theorem \ref{prop:2.2} suggests that,
for a general system of quadratic functions $f(x)=x^TAx+a^Tx+\gamma$, $g_0(x)=\|x-x_0\|^2-\alpha$ and $g_i(x)=\|Bx\|^2+b_i^Tx-\beta_i$, $i=1,\ldots,m$ with
 $B \in \mathbb{R}^{l \times n}$ for some $l \in \mathbb{N}$, $U(f,g_0,g_1,\ldots,g_m)$ can be shown to be convex under a modified dimension condition.
This will be given later in Section 6.}
\end{remark}
\setcounter{equation}{0}
\section{Exact SDP Relaxations}
In this section, we establish that a semi-definite relaxation of the model problem (P)
\begin{eqnarray*}
(P) & \min_{x \in \mathbb{R}^n}  &  x^TAx+a^Tx \\
& \mbox{ s.t. } & \|x-x_0\|^2 \le \alpha, \\
& & b_i^Tx \le \beta_i, \, i=1,\ldots,m,
\end{eqnarray*}
is exact under the dimension condition. Importantly, it holds without the Slater condition. To formulate a SDP relaxation of (P), let us introduce the following $(n+1) \times (n+1)$ matrices: $M= \left(\begin{array}{cc}
A & a/2 \\
a^T/2 & 0
           \end{array}\right),$
\begin{equation}\label{eq:Hi}
 H_0=\left(\begin{array}{cc}
I_n & -x_0 \\
-x_0^T & \|x_0\|^2-\alpha
           \end{array}\right) \mbox{ and } H_i=\left(\begin{array}{cc}
0 & b_i/2 \\
b_i^T/2 & -\beta_i
           \end{array}\right), i=1,\ldots,m.
\end{equation}
Note that $x^TAx+a^Tx={\rm Tr}(MX)$, $\|x-x_0\|^2-\alpha={\rm Tr}(H_0X)$ and $b_i^Tx-\beta_i={\rm Tr}(H_iX)$ where
$X=\tilde{x}\tilde{x}^T$ with $\tilde{x}=(x^T,1)^T$. Thus, the model problem can be equivalently rewritten as
\begin{eqnarray*}
& \min_{X \in S^{n+1}_+} & {\rm Tr}(MX)  \\
& \mbox{ s.t. } &  {\rm Tr}(H_0 X) \le 0, \\
& & {\rm Tr}(H_i X) \le 0, i=1,\ldots,m \\
& & X_{n+1,n+1}=1, {\rm rank}(X)=1,
\end{eqnarray*}
where ${\rm rank}(X)$ denotes the rank of the matrix $X$ and $X_{n+1,n+1}$ is the element of $X$ that lies in the $n+1^{\rm th}$ row and $n+1^{\rm th}$ column. By removing the rank one constraint, we obtain the following  semi-definite relaxation of (P)
\begin{eqnarray*}
(SDRP) & \min_{X \in S^{n+1}_+} & {\rm Tr}(MX)  \\
& \mbox{ s.t. } &  {\rm Tr}(H_0 X) \le 0, \\
& & {\rm Tr}(H_i X) \le 0, i=1,\ldots,m \\
& & X_{n+1,n+1}=1.
\end{eqnarray*}
The semi-definite relaxation problem (SDRP) is a convex program over a matrix space. Its convex dual problem can be stated as follows
\begin{eqnarray*}
(D) & &\displaystyle  \max_{\mu \in \mathbb{R}, \;\lambda_i \ge 0, i=0,\ldots, m} \{ \mu : M+\sum_{i=0}^m \lambda_i H_i \succeq \left( \begin{array}{cc}
0& 0 \\
 0 & \mu
 \end{array}
\right)\} \\
& = & \max_{\lambda_i \ge 0, \atop i=0,\ldots,m} \min_{x \in \mathbb{R}^n}\{x^TAx+a^Tx+ \lambda_0(\|x-x_0\|^2-\alpha)+\sum_{i=1}^m\lambda_i (b_i^Tx-\beta_i)\},
\end{eqnarray*}
which coincides with the Lagrangian dual problem of (P). Clearly, (SDRP) and (D) are semi-definite linear programming problems and hence can be solved efficiently, whereas the original problem (P) which is a non-convex quadratic program with multiple constraints, is, in general, a computationally hard problem. Therefore, it is of interest to study when the semi-definite relaxation is exact in the sense that $\min(P)=\min(SDRP)$. For related most recent results on exact SDP relaxations, see \cite{jl-orl}.

If $A$ is positive semidefinite, then the  problem (P) is a convex quadratic optimization problem which is known to enjoy nice properties such as strong duality and exact relaxation. {\it Therefore, from now on, we assume that $A$ is not positive semidefinite and so, has at least one negative eigenvalue.}

\begin{theorem}\label{th:value} {\bf (Exact SDP-relaxation)} Suppose that the dimension condition (\ref{eq:DC}) is satisfied.
Then, the semi-definite relaxation is exact, i.e., $\min(P) = \min(SDRP)$.
\end{theorem}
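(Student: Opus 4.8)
The plan is to deduce exactness of the SDP relaxation directly from the hidden-convexity result, Theorem~\ref{prop:2.2}, applied to the functions $f(x)=x^TAx+a^Tx$ (so $\gamma=0$), $g_0(x)=\|x-x_0\|^2-\alpha$ and $g_i(x)=b_i^Tx-\beta_i$. Since the dimension condition holds, the set $U(f,g_0,g_1,\ldots,g_m)$ is convex, and by Proposition~\ref{prop:2.1} it is also closed. The strategy is the standard separation argument: weak duality always gives $\min(SDRP)\le\min(P)$ (indeed $\min(D)\le\min(SDRP)\le\min(P)$), so it suffices to prove $\min(P)\le\min(D)$, i.e. that the Lagrangian dual value is at least $\min(P)$; combined with weak duality this forces all three values to coincide, which in particular yields $\min(P)=\min(SDRP)$.

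First I would dispose of trivial cases: if (P) is infeasible both sides are $+\infty$, and if $\min(P)=-\infty$ then, since $A$ is not positive semidefinite, one checks this cannot happen on a bounded feasible set — the ball constraint $\|x-x_0\|^2\le\alpha$ keeps the feasible region compact, so $\min(P)$ is finite and attained. Write $\mu^*=\min(P)\in\mathbb{R}$. Then the point $(0,0,\ldots,0)$ (in $\mathbb{R}^{m+2}$, first coordinate shifted) lies \emph{on the boundary} of, but the point $(\mu^*-\varepsilon,0,\ldots,0)$ does \emph{not} lie in, the convex closed set $U(f,g_0,g_1,\ldots,g_m)$: if it did, there would be an $x$ with $f(x)\le\mu^*-\varepsilon$, $g_0(x)\le0$, $g_i(x)\le0$, contradicting optimality. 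Hence I can strictly separate: there is a nonzero vector $(\lambda_f,\lambda_0,\lambda_1,\ldots,\lambda_m)$ with $\lambda_f(\mu^*-\varepsilon)+\sum\lambda_i\cdot 0 < \inf\{\lambda_f r+\lambda_0 s_0+\sum\lambda_i s_i : (r,s_0,\ldots,s_m)\in U\}$. Because $U$ contains $\mathbb{R}_+^{m+2}$ translates, all the multipliers $\lambda_f,\lambda_0,\ldots,\lambda_m$ must be nonnegative; and $\lambda_f>0$ (if $\lambda_f=0$ the separation would say the ``lower'' quadratic-image shifted by the orthant is bounded below by something positive in directions not involving $f$, which, after also using a Slater-type feasible interior point of the ball, forces a contradiction — here I would instead argue $\lambda_f\ne0$ by noting that a feasible $x$ with $\|x-x_0\|^2<\alpha$ exists, e.g.\ $x=x_0$ if $x_0$ satisfies the linear constraints, or more carefully by perturbing; failing that, one normalizes and takes $\varepsilon\to0$ using closedness). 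Normalizing $\lambda_f=1$, the separation inequality becomes exactly $\mu^*-\varepsilon < \inf_x\{f(x)+\lambda_0 g_0(x)+\sum_{i}\lambda_i g_i(x)\} = \inf_x\{x^TAx+a^Tx+\lambda_0(\|x-x_0\|^2-\alpha)+\sum\lambda_i(b_i^Tx-\beta_i)\}$, which is a feasible value for the dual $(D)$. Letting $\varepsilon\downarrow0$ gives $\min(D)\ge\mu^*=\min(P)$, and with weak duality $\min(P)=\min(D)=\min(SDRP)$.

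The main obstacle is the handling of the multiplier $\lambda_f$: one must rule out $\lambda_f=0$ in order to normalize and recover a genuine Lagrangian lower bound, and the delicate point is that the theorem is claimed \emph{without} the Slater condition, so I cannot simply invoke Slater to force $\lambda_f>0$. I expect the clean way around this is to run the separation argument on the value-function epigraph picture established inside the proof of Theorem~\ref{prop:2.2} (where $U(f,g_0,g_1,\ldots,g_m)={\rm epi}\,h$ with $h$ convex and, by the attainment-on-the-sphere lemma, lower semicontinuous with closed epigraph): the point $(\mu^*,0,\ldots,0)$ is then a boundary point of a closed convex epigraph whose ``vertical'' recession direction is present, which automatically produces a \emph{non-vertical} supporting hyperplane, i.e.\ one with $\lambda_f>0$ — this is precisely the epigraphical fact that a proper closed convex function has an affine minorant touching it at any point where it is finite and continuous along the relevant directions. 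Translating that supporting hyperplane back through the identity $U={\rm epi}\,h$ gives the nonnegative multipliers $\lambda_0,\ldots,\lambda_m$ and the dual-feasible bound, completing the proof; the remaining steps (weak duality, the compactness/finiteness reductions) are routine.
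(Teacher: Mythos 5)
Your overall strategy is sound and is, at bottom, the same as the paper's: both arguments rest on the convexity of $U(f,g_0,g_1,\ldots,g_m)$ from Theorem \ref{prop:2.2} and its closedness from Proposition \ref{prop:2.1}, deduce that the Lagrangian duality gap is zero, and then squeeze $\min(SDRP)$ via the chain $\min(P)\ge \min(SDRP)\ge \sup(D)$. The only real difference is that the paper phrases the middle step as ``the value function is convex and lower semicontinuous, hence no duality gap'' and cites Jeyakumar--Wolkowicz for that fact, whereas you unpack it into an explicit strict-separation argument; that is a legitimate, more self-contained route.

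Two corrections to your execution, one of which is a genuine error. Your fallback claim that the closed convex epigraph ``automatically produces a non-vertical supporting hyperplane'' at $(\mu^*,0,\ldots,0)$ is false: for $h(t)=-\sqrt{t}$ on $[0,\infty)$ the epigraph is closed, convex and contains the vertical recession direction, yet every supporting hyperplane at $(0,0)$ is vertical. Without Slater, dual \emph{attainment} genuinely can fail (the paper's one-dimensional example with $f(x)=x$, $g_0(x)=x^2$ after Corollary \ref{cor:S_lemma} exhibits exactly this), so you cannot obtain $\lambda_f>0$ from a supporting hyperplane at the optimum. The correct and much simpler way to rule out $\lambda_f=0$ in your $\varepsilon$-separation is mere feasibility of (P), not any Slater-type point: strict separation of $(\mu^*-\varepsilon,0,\ldots,0)$ from the closed convex set $U$ gives $\lambda_f(\mu^*-\varepsilon)<\delta\le \lambda_f f(x)+\sum_{i=0}^m\lambda_i g_i(x)$ for all $x$, and if $\lambda_f=0$ then evaluating at any feasible point of (P) yields $0<\delta\le\sum_{i=0}^m\lambda_i g_i(x)\le 0$, a contradiction. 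With that fix, normalizing and letting $\varepsilon\downarrow 0$ gives $\sup(D)\ge\min(P)$ without needing attainment, which is all the squeeze requires. (You also omit the paper's final step showing that the minimum of (SDRP) is attained, via boundedness of its feasible set; that is needed to write $\min$ rather than $\inf$ for the relaxation.)
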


\begin{proof}
${\bf [\min(P)=\max(D)<+\infty]}$. We first prove that there is no duality gap between (P) and (D) under
the dimension condition. It is known that this will follow if we show that the optimal value function of (P)
\[
v(s_0,s_1,\ldots,s_m):=\inf_{x \in \mathbb{R}^n}\{x^TAx+a^Tx: \|x-x_0\|^2 \le \alpha+s_0, \, b_i^Tx \le \beta_i+s_i, \, i=1,\ldots,m\},
\]
is lower semicontinuous and convex function on $\mathbb{R}^{m+1}$ (See, for instance \cite{jw} for details).
To see this, we first note that
${\rm epi}v=U(f,g_0,g_1,\ldots,g_m)$
where $f(x)=x^TAx+a^Tx$, $g_0(x)=\|x-x_0\|^2-\alpha$ and $g_i(x)=b_i^Tx-\beta_i$, $i=1,\ldots,m$.
So, by Proposition \ref{prop:2.2}, ${\rm epi}v$ is a convex set, and so, $v$ is a convex function.
The lower semicontinuity of $v$ will follow from Proposition \ref{prop:2.1} as $U(f,g_0,g_1,\ldots,g_m)$ is a closed set.
\medskip

\noindent${\bf [\min(P)=\min(SDRP)]}$. By the construction of the SDP relaxation problem (SDRP) and the dual (D), it is easy see that
\[
\min(P) \ge \min(SDRP) \ge \max(D).
\]
As there is no duality gap between (P) and (D), we obtain that
$\min(P)=\min(SDRP)$.
\medskip

\noindent{\bf [Attainment of Minimum of (SDRP)]} We now show that the minimum in (SDRP) is attained. To see this, we only need to show the feasible set of (SDRP) is bounded. If not, then there exist $X^k \in S_{+}^{n+1}$ with
\[
X^k= \left(\begin{array}{cc}
Y^k & y^k \\
y^k & 1
\end{array}
 \right)
\]
such that $\|X^k\|_F:=\sqrt{{\rm Tr}(X^kX^k)} \rightarrow +\infty$, ${\rm Tr}(H_iX^k) \le 0$, $i=0,1,\ldots,m$ where $H_i$, $i=0,1,\ldots,m$ is defined as in (\ref{eq:Hi}). This implies that
\[
0 \le {\rm Tr}(Y^k) \le -\|x_0\|^2+\alpha+2(y^k)^Tx_0 \mbox{ and } b_i^Ty^k \le \beta_i.
\]
As $X^k \succeq 0$, we have $Y^k-y^k(y^k)^T \succeq 0$. So, $$\|y^k\|^2 ={\rm Tr}\big(y^k(y^k)^T\big) \le {\rm Tr}(Y^k) \le -\|x_0\|^2+\alpha+2(y^k)^Tx_0.$$ So, $y^k$ is bounded, and so ${\rm Tr}(Y^k)$ is also a bounded sequence.  Thus, both $Y^k$ and $y^k$ are bounded. It follows that $X_k$ is bounded which contradicts the fact that $\|X^k\|_F \rightarrow +\infty$.
\end{proof}

It should be noted that convexity of the set $U(f,g_0,g_1,\ldots,g_m)$ plays an important role in establishing
the exact SDP relaxation of (P). However, as we see in the following example, the convexity does not imply that problem (P) is equivalent to a convex optimization problem in the sense that they have the same minimizers.
\begin{example}
Consider $f(x)=x^2$, $g_0(x)=x^2-1$ and $g_1(x)=-x^2+1$. It can be checked that $$U(f,g_0,g_1)=\{(x^2,x^2-1,-x^2+1):x \in \mathbb{R}\}=\{(z,z-1,-z+1):z \ge 0\},$$
which is a closed and convex set. On the other hand, the corresponding optimization problem $\min_{x \in \mathbb{R}}\{x^2: x^2-1 \le 0, -x^2+1 \le 0\}$ cannot be equivalent to a convex optimization
problem as its solution set is $\{-1,1\}$ which is not a convex set.
\end{example}

One interesting feature of our SDP relaxation result is that its exactness is independent of the Slater condition. The following example illustrates that our SDP relaxation may be exact while the Slater condition fails.

\begin{example}{\bf (Exact SDP-relaxation without the Slater condition)}\label{ex:3.1}
Consider the three dimensional quadratic optimization problem with two linear inequalities:
 \begin{eqnarray*}
(EP) & \displaystyle \min_{(x_1,x_2,x_3) \in \mathbb{R}^3}  &  -x_1^2-x_2^2-x_3^2+3x_1+2x_2+2x_3 \\
& \mbox{ s.t. } & (x_1-1)^2+x_2^2 +x_3^2\le 1, \\
& &  x_1 \le 0, \\
& & x_1+x_2+x_3 \le 0.
\end{eqnarray*}
This can be written as  our model problem where $A=\left(\begin{array}{ccc}
-1 & 0 & 0 \\
0 & -1 & 0 \\
0 & 0 & -1
                                  \end{array}\right)
$, $a=(3,2,2)^T$, $x_0=(1,0,0)^T$, $\alpha=1$, $b_1=(1,0,0)^T$, $b_2=(1,1,1)$ and $\beta_1=\beta_2=0$.
Clearly, the only feasible point is $(0,0,0)$ and so, $\min(EP)=0$. We also note that the Slater condition fails. Let $s={\rm dim}\, {\rm span}\{b_1,b_2\}=2$. We see that
$$ {\rm dim}{\rm Ker}(A -\lambda_{\rm min}(A)I_n)=3=s+1.$$ So, the dimension condition is satisfied.

On the other hand, the SDP-relaxation of (EP) is given by
\begin{eqnarray*}
(SDRP_E) & \displaystyle \min_{X \in S^{4}} & -z_1+3z_4-z_5+2z_7-z_8+2z_9  \\
& \mbox{ s.t. } &  z_1-2z_4+z_5+z_8 \le 0  \\
& &  z_4 \le 0 \\
& & z_4+z_7+z_9 \le 0 \\
& & X=\left(\begin{array}{cccc}
z_1 & z_2 & z_3 & z_4 \\
z_2 & z_5 & z_6 & z_7 \\
z_3 & z_6 & z_8 & z_9 \\
z_4 & z_7 & z_9 & 1
            \end{array}
 \right)\succeq 0.
\end{eqnarray*}
Since $z_1=z_2=\ldots=z_9=0$ is feasible for $(SDRP_E)$, $\min(SDRP_E)\le 0$. Moreover, for each feasible $X=\left(\begin{array}{cccc}
z_1 & z_2 & z_3 & z_4 \\
z_2 & z_5 & z_6 & z_7 \\
z_3 & z_6 & z_8 & z_9 \\
z_4 & z_7 & z_9 & 1
            \end{array}
 \right)\succeq 0$, we have $z_1 \ge 0$, $z_5 \ge 0$,
 \begin{equation} \label{eq:quad_ineq}
z_8 \ge z_9^2 \ge 0 \mbox{ and } z_5 \ge z_7^2 \ge 0.
 \end{equation}
 This gives us that
\[
 -2z_4 \le z_1-2z_4+z_5+z_8 \le 0
\]
and so, $z_4 \ge 0$. As $z_4 \le 0$, we have $z_4=0$ and so, $z_1+z_5+z_8 \le 0$. Hence, $z_1=z_5=z_8=0$ and $z_7=z_9=0$ (by (\ref{eq:quad_ineq})). Thus, $\min(SDRP_E)= 0=\min(EP)$.
\end{example}


In the following, we use a simple one-dimensional quadratic optimization problem to show that the SDP relaxation may not be exact if our sufficient dimension condition (\ref{eq:DC}) is not satisfied.
\begin{example}{\bf (Importance of sufficient dimension condition)}\label{ex:dimension}
Consider the minimization problem $$(EP_1) \ \min_{x \in \mathbb{R}}\{f(x): g_0(x) \le 0, g_1(x) \le 0\},$$
where $f(x)=x-x^2$, $g_0(x)=x^2-1$, $g_1(x)=-x$, $n=1$ and $m=1$. Then, $f(x)=x^TAx+a^Tx+r$ with $A=-1$, $a=1$ and $r=0$, $g_0(x)=\|x-x_0\|^2-\alpha$ with $x_0=0$, $\alpha=1$ and $g_1(x)=b_1^Tx-\beta_1$  with $b_1=1$ and  $\beta_1=0$. Clearly, ${\rm dim}{\rm Ker}(A -\lambda_{\rm min}(A)I_n)=1<2={\rm dim}\, {\rm span}\{b_1\}+1.$

The SDP relaxation of $(EP_1)$ is given by
\begin{eqnarray*}
(SDRP_{E1}) & \displaystyle \min_{X \in S^{2}} & -z_1+z_2  \\
& \mbox{ s.t. } &  z_1-1 \le 0  \\
& &  -z_2 \le 0 \\
& & X=\left(\begin{array}{cc}
z_1 & z_2  \\
z_2 & 1
            \end{array}
 \right)\succeq 0.
\end{eqnarray*}
It can be easily verified that $\min(EP_1)=0$ and $\min(SDRP_{E1})=-1$. Thus, the SDP relaxation of $(EP_1)$ is not exact.
\end{example}

Consider the quadratic optimization problem with one norm constraint and a rank-one quadratic inequality constraint:
\begin{eqnarray*}
(P_0) & \min_{x \in \mathbb{R}^n}  &  x^TAx+a^Tx \\
& \mbox{ s.t. } & \|x-x_0\|^2 \le \alpha, \\
& &  (b^Tx)^2 \le r,
\end{eqnarray*}
where $A \in S^{n \times n}$, $a,x_0,b \in \mathbb{R}^n$, $\alpha \in \mathbb{R}$ and $r \ge 0$.

Model problems of this form arise from the application of the trust-region method for the minimization of a nonlinear function with a discrete constraint. For instance, consider the trust-region approximation problem
\begin{eqnarray*}
& \min_{x \in \mathbb{R}^n}  &  x^TAx+a^Tx \\
& \mbox{ s.t. } & \|x-x_0\|^2 \le \alpha, \\
& &  b^Tx \in \{1, -1\}.
\end{eqnarray*}
The continuous relaxation of this problem becomes
\begin{eqnarray*}
& \min_{x \in \mathbb{R}^n}  &  x^TAx+a^Tx \\
& \mbox{ s.t. } & \|x-x_0\|^2 \le \alpha, \\
& &  -1\le b^Tx \le 1,
\end{eqnarray*}
which is, in turn, equivalent to $(P_0)$ with $r=1$.

The SDP-relaxation of $(P_0)$ is given by
\begin{eqnarray*}
(SDRP_0) & \min_{X \in S^{n+1}_+} & {\rm Tr}(\tilde{M}X)  \\
& \mbox{ s.t. } &  {\rm Tr}(\tilde{H}_0 X) \le 0 \\
& & {\rm Tr}(\tilde{H}_i X) \le 0, i=1,2 \\
& & X_{n+1,n+1}=1.
\end{eqnarray*}
where
\[
\tilde{M}= \left(\begin{array}{cc}
A & a/2 \\
a^T/2 & 0
           \end{array}\right), \tilde{H}_0=\left(\begin{array}{cc}
I_n & -x_0 \\
-x_0 & \|x_0\|^2-\alpha
           \end{array}\right)
\]
\[
 H_1=\left(\begin{array}{cc}
0 & b/2 \\
b^T/2 & -\sqrt{r}
           \end{array}\right) \mbox{ and }
 H_2=\left(\begin{array}{cc}
0 & -b/2 \\
-b^T/2 & \sqrt{r}
           \end{array}\right).
\]

We now obtain the following exact SDP-relaxation result for the problem $(P_0)$ under a dimension condition.

 \begin{corollary}{\bf (Trust-region model with rank-one constraint)}
Suppose that
 ${\rm dim}{\rm Ker}(A -\lambda_{\rm min}(A)I_n) \ge 2.$
Then, the semi-definite relaxation is exact for $(P_0)$, i.e., $\min(P_0) = \min(SDRP_0)$.
\end{corollary}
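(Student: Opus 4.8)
The plan is to reduce $(P_0)$ to an instance of the extended trust-region problem $(P)$ with two linear inequality constraints, and then to invoke Theorem \ref{th:value}. The key elementary observation is that, since $r \ge 0$, the rank-one quadratic constraint $(b^Tx)^2 \le r$ is equivalent to the pair of parallel affine inequalities $b^Tx \le \sqrt{r}$ and $-b^Tx \le \sqrt{r}$. Consequently the feasible set of $(P_0)$ coincides with that of the problem $(P)$ determined by the data $m = 2$, $b_1 = b$, $b_2 = -b$, $\beta_1 = \beta_2 = \sqrt{r}$ (with the same $A$, $a$, $x_0$, $\alpha$); since the objective is unchanged, $\min(P_0) = \min(P)$ for this choice of data, and in fact the two problems share the same set of minimizers. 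Reading off the coefficient matrices via (\ref{eq:Hi}) for these $b_1, b_2, \beta_1, \beta_2$, one checks that the semi-definite relaxation $(SDRP)$ attached to this $(P)$ is exactly $(SDRP_0)$; hence $\min(SDRP_0) = \min(SDRP)$.

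It then remains to verify that the dimension condition (\ref{eq:DC}) is satisfied for this instance of $(P)$. Here ${\rm span}\{b_1, b_2\} = {\rm span}\{b, -b\} = {\rm span}\{b\}$, so $s := {\rm dim}\,{\rm span}\{b_1, b_2\} \le 1$, and the dimension condition merely asks for ${\rm dim}\,{\rm Ker}(A - \lambda_{\rm min}(A) I_n) \ge s + 1$. This is implied by the standing hypothesis ${\rm dim}\,{\rm Ker}(A - \lambda_{\rm min}(A) I_n) \ge 2$ (when $b \ne 0$ the two conditions coincide; when $b = 0$ the quadratic constraint is vacuous, $s = 0$, and the condition is automatic). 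Applying Theorem \ref{th:value} gives $\min(P) = \min(SDRP)$, and chaining this with the two identities from the first step yields $\min(P_0) = \min(P) = \min(SDRP) = \min(SDRP_0)$, as claimed. If $A$ is positive semidefinite, $(P_0)$ is a convex quadratic program and the exactness of $(SDRP_0)$ is classical, so the standing assumption that $A$ has a negative eigenvalue costs nothing.

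I do not anticipate a genuine obstacle: the whole argument is the translation between a rank-one quadratic constraint and its description by two halfspaces, together with the trivial remark that replacing $b$ by $\{b, -b\}$ does not enlarge the spanned subspace, so that a rank-one constraint costs one less than a generic pair of linear constraints in the sense of the dimension condition. The only place calling for a little care is checking that $(SDRP_0)$, with its specific matrices $\tilde{M}$, $\tilde{H}_0$, $H_1$, $H_2$, is literally the $(SDRP)$ associated with the reformulated $(P)$ and not a weaker relaxation of $(P_0)$; this is a direct term-by-term comparison with (\ref{eq:Hi}).
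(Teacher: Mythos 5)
Your proposal is correct and follows exactly the paper's own argument: rewrite $(b^Tx)^2 \le r$ as the pair of linear inequalities $\pm b^Tx \le \sqrt{r}$, observe that ${\rm dim}\,{\rm span}\{b,-b\} \le 1$ so the hypothesis ${\rm dim}\,{\rm Ker}(A-\lambda_{\rm min}(A)I_n)\ge 2$ gives the dimension condition, and invoke Theorem \ref{th:value}. Your extra care in checking that $(SDRP_0)$ is literally the $(SDRP)$ of the reformulated problem is a detail the paper leaves implicit, but it does not change the route.
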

\begin{proof} Note that $(b^Tx)^2 \le r$ is equivalent to $-\sqrt{r} \le b^Tx \le \sqrt{r}$. In this case the dimension condition of Theorem \ref{th:value} reduces to the assumption that
\[
 {\rm dim}{\rm Ker}(A -\lambda_{\rm min}(A)I_n) \ge {\rm dim}\, {\rm span}\{b,-b\}+1.
 \]
The conclusion follows from  Theorem \ref{th:value} and the fact that ${\rm dim}\, {\rm span}\{b,-b\} \le 1$.
 \end{proof}

\begin{remark} {\bf (Approximate S-lemma and SDP Relaxations)}{\rm
For a general homogeneous quadratic optimization problem with multiple convex quadratic constraints, an estimate for the ratio between the optimal value of the underlying quadratic optimization and its associated SDP relaxation problem has been given in \cite{BNR} (see Appendix). This result is known as an approximate S-lemma as it provides the approximate ratio from the SDP relaxation to the underlying problem. Clearly, Theorem \ref{th:value} shows that the ratio between the optimal value of the underlying quadratic optimization problem and its associated SDP relaxation problem  is one for the extended trust region problem (P), under the dimension condition. For other nonconvex quadratic optimization problems where the corresponding ratio also equals one, see \cite{Ye_Zhang}.

Consider the quadratic optimization problem with the
constraint set described by the intersection of an Euclidean ball and a box:
\begin{eqnarray*}
(P_1) & \min_{x \in \mathbb{R}^n}  &  x^TAx+a^Tx \\
& \mbox{ s.t. } & \|x\|^2 \le 1, \\
& &  -l_i \le x_i \le l_i, i=1,\ldots,n,
\end{eqnarray*}
where  $l_i > 0$. This class of nonconvex quadratic problems is known to be NP-hard. Indeed,  when $l_i \le \frac{1}{2}$ and $A$ is negative definite, the norm constraint,  $\|x\|^2 \le 1$, becomes superfluous and so, the problem $(P_1)$
reduces to the  quadratic concave minimization problem with bounded constraints which is an NP-hard problem (cf. \cite{BNR}). Using the approximate S-lemma of \cite{BNR} and a semidefinite programming relaxation, one can find an estimate for the value of the nonconvex quadratic problem $(P_1)$.
\medskip

We note that our dimension condition
fails for $(P_1)$. To see this, take $m=2n$, $b_i=e_i$, $i=1,\ldots,n$ and $b_i=-e_i$, $i=n+1,\ldots,2n$. Then, we see that ${\rm dim}\, {\rm span}\{b_1,\ldots,b_n\}=n$ in this case,
and so, the dimension condition reduces
to  ${\rm dim}{\rm Ker}(A -\lambda_{\rm min}(A)I_n)\ge n +1$ which is impossible.
\medskip

On the other hand, consider the following semi-definite relaxation of $(P_1)$ (see \cite{BNR})
\begin{eqnarray*}
(SDRP_1) & \min_{X \in S^{n+1}_+} & {\rm Tr}(MX)  \\
& \mbox{ s.t. } &  {\rm Tr}(H_0 X) \le 1, \\
& & {\rm Tr}(H_i X) \le 1, i=1,\ldots,n \\
& & {\rm Tr}(H_{2n+1}X) \le 1,
\end{eqnarray*}
where
\begin{equation}\label{eq:0099}
 {M}= \left(\begin{array}{cc}
A & a/2 \\
a^T/2 & 0
           \end{array}\right), {H}_0=\left(\begin{array}{cc}
I_n & 0 \\
0 & 0
           \end{array}\right)
\end{equation}
\begin{equation}\label{eq:0098}
 H_i=\left(\begin{array}{cc}
\frac{1}{l_i^2}{\rm diag}(e_i) & 0 \\
0 & 0
           \end{array}\right), \ i=1,\ldots,n ,
\mbox{ and }
  H_{n+1}=\left(\begin{array}{cc}
0_{n \times n} & 0 \\
0 & 1
           \end{array}\right).
\end{equation}

Following \cite{BNR}, we can get
\[
2\log (6n+6) \min(P_1) \le \min(SDRP_1) \le \min(P_1)  \le 0.
\]

To see this, we first note that  $\min(P_1)$ equals the optimal value of the following optimization problem
\begin{eqnarray*}
 & \min_{(x,t) \in \mathbb{R}^n \times \mathbb{R}}  &  x^TAx+ta^Tx  \\
& \mbox{ s.t. } & \|x\|^2 \le 1, \\
& &  \frac{1}{l_i^2}x_i^2 \le 1, i=1,\ldots,n, \\
& & t^2 \le 1.
\end{eqnarray*}
which is, in turn, equal to the negative of the optimal value of the following quadratically constrained quadratic problem
\[
 (QCQ_1) \max_{y=(x^T,t)^T \in \mathbb{R}^n \times \mathbb{R}}\{-y^TMy: y^TH_0y \le 1, y^TH_iy \le 1, i=1,\ldots,n+1\}
\]
where $M$ and $H_i$, $i=0,1,\ldots,n+1$ are defined as in (\ref{eq:0099}) and (\ref{eq:0098}).
Note that ${\rm rank}H_i=1$, $i=1,2,\ldots,n+1$ and $\sum_{i=0}^{n+1}H_i \succ 0$. So, {\rm \cite[Lemma A.6, Approximate S-lemma]{BNR}}  implies that
\begin{eqnarray} \label{eq:approximate}
 \max(QCQ_1) \le \min(SDP_1) &\le & 2\log(6 \sum_{i=1}^{n+1}{\rm rank}H_i)\max(QCQ_1) \nonumber \\
&= & 2\log (6n+6) \max(QCQ_1),
\end{eqnarray}
where $(SDP_1)$ is given by
\[
(SDP_1) \ \ \ \min_{\mu_0,\ldots,\mu_{n+1} \ge 0}\{\sum_{i=0}^{n+1} \mu_i : M+\sum_{i=0}^{n+1}\mu_kH_k \succeq 0\}.
\]
It can be verified that $(SDP_1)$ is the Lagrange dual problem of the semi-definite problem
\begin{eqnarray*}
(SP_1) & \max_{X \in S^{n+1}_+} & {\rm Tr}(-MX)  \\
& \mbox{ s.t. } &  {\rm Tr}(H_0 X) \le 1, \\
& & {\rm Tr}(H_i X) \le 1, i=1,\ldots,n \\
& & {\rm Tr}(H_{n+1}X) \le 1,
\end{eqnarray*}
and Slater condition holds for $(SP_1)$. So, $\min(SDP_1)=\max(SP_1)$. Finally, the conclusion follows from (\ref{eq:approximate})
by noting that $\max(SP_1)=-\min(SDRP_1)$.
}\end{remark}

\setcounter{equation}{0}
\section{Global Optimality and Strong Duality}
In this section, we present a necessary and sufficient condition for global optimality of (P) and consequently, obtain strong duality between (P) and (D) whenever the dimension condition is satisfied and Slater's condition holds for (P). Related global optimality and duality results for nonconvex quadratic optimization can be found in \cite{jll,jhl_OE,jrwnes,Yuan_Peng}.

\begin{theorem} \label{th:global}  {\bf (Necessary and sufficient global optimality condition)} For (P), suppose that there exists $\overline{x} \in \mathbb{R}^n$ with $\|\overline{x}-x_0\|^2 <\alpha$ and $b_i^T\overline{x}<\beta_i$, $i=1,\ldots,m$, and that the dimension condition (\ref{eq:DC}) is satisfied.
Let $x^*$ be a feasible point of (P). Then, $x^*$ is a global minimizer of (P) if and only if there exists $(\lambda_0,\lambda_1,\ldots,\lambda_m) \in \mathbb{R}_+^{m+1}$ such that the following condition holds:
\begin{eqnarray*}
\left \{\begin{array}{ll}
2\big((A+\lambda_0 I_n)x^*\big)=-\big(a+2\lambda_0 (x^*-x_0)+\sum_{i=1}^m \lambda_i b_i\big), & \mbox{ (KKT Condition)} \\
\lambda_0(\|x^*-x_0\|^2-\alpha)=0 \mbox{ and } \lambda_i(b_i^Tx^*-\beta_i)=0, i=1,\ldots,m, & \mbox{ (Complementary Slackness)} \\
A+\lambda_0 I_n \succeq 0, & \mbox{ (Second Order Condition)}.
\end{array} \right.
\end{eqnarray*}
\end{theorem}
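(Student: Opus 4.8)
The plan is to use the exact SDP-relaxation of Theorem~\ref{th:value}, which applies here because the dimension condition is assumed, together with the Slater condition (guaranteed by the existence of $\overline{x}$) to obtain strong duality between (P) and its Lagrangian dual (D), and then to read off the optimality conditions from the zero-duality-gap relation. For the \emph{sufficiency} direction, I would first check directly that if $(\lambda_0,\ldots,\lambda_m)\in\mathbb{R}_+^{m+1}$ satisfies the three displayed conditions at a feasible $x^*$, then $x^*$ is a global minimizer: the Second Order Condition $A+\lambda_0 I_n\succeq 0$ makes the Lagrangian $L(x):=x^TAx+a^Tx+\lambda_0(\|x-x_0\|^2-\alpha)+\sum_{i=1}^m\lambda_i(b_i^Tx-\beta_i)$ a convex quadratic in $x$ (note $\nabla^2 L=2(A+\lambda_0 I_n)\succeq 0$); the KKT Condition says $\nabla L(x^*)=0$, so $x^*$ globally minimizes $L$ over $\mathbb{R}^n$; and then for any feasible $x$, using $\lambda_i\ge 0$ and feasibility, $f(x)\ge L(x)\ge L(x^*)=f(x^*)$, where the last equality is Complementary Slackness. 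This half needs no dimension condition and no Slater condition.

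For the \emph{necessity} direction, suppose $x^*$ is a global minimizer, so $\min(P)=f(x^*)$. By Theorem~\ref{th:value}, $\min(P)=\min(SDRP)=\max(D)$, and in the proof of that theorem the dual value $\max(D)$ is attained (the optimal value function $v$ is lower semicontinuous and convex, so $\partial v(0)\neq\emptyset$, equivalently a dual optimal solution exists; alternatively one invokes the Slater condition for (SDRP)/(D) directly). Let $(\lambda_0,\lambda_1,\ldots,\lambda_m)\in\mathbb{R}_+^{m+1}$ be a dual optimal solution, so that
\[
f(x^*)=\max(D)=\min_{x\in\mathbb{R}^n}\Big\{x^TAx+a^Tx+\lambda_0(\|x-x_0\|^2-\alpha)+\sum_{i=1}^m\lambda_i(b_i^Tx-\beta_i)\Big\}=\min_{x\in\mathbb{R}^n}L(x).
\]
Because the right-hand infimum is finite, the quadratic $L$ must be bounded below, which forces its Hessian $2(A+\lambda_0 I_n)\succeq 0$; this is the Second Order Condition. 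Now evaluate: $f(x^*)\ge L(x^*)$ always (since $\lambda_i\ge 0$ and $x^*$ is feasible, each term $\lambda_i g_i(x^*)\le 0$), while $L(x^*)\ge \min_x L(x)=f(x^*)$; hence equality throughout. Equality $f(x^*)=L(x^*)$ gives $\sum_{i=0}^m\lambda_i g_i(x^*)=0$, and since each summand is $\le 0$ this yields Complementary Slackness $\lambda_i g_i(x^*)=0$ for all $i$. Equality $L(x^*)=\min_x L(x)$ together with $A+\lambda_0 I_n\succeq 0$ (so $L$ convex and differentiable) gives $\nabla L(x^*)=0$, i.e. $2(A+\lambda_0 I_n)x^*+a+2\lambda_0(x_0-x_0)\ldots$; writing it out, $2Ax^*+a+2\lambda_0(x^*-x_0)+\sum_{i=1}^m\lambda_i b_i=0$, which rearranges to the stated KKT Condition $2(A+\lambda_0 I_n)x^*=-(a+2\lambda_0(x^*-x_0)+\sum_{i=1}^m\lambda_i b_i)$ after moving $2\lambda_0 x^*$ appropriately. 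Wait --- one must be careful matching the stated form: $\nabla_x L(x^*)=2Ax^*+a+2\lambda_0(x^*-x_0)+\sum\lambda_i b_i$, and the displayed KKT reads $2(A+\lambda_0 I_n)x^* + a+2\lambda_0(x^*-x_0)+\sum\lambda_i b_i=0$, i.e. $2\lambda_0 x^*$ is added on both as part of $(A+\lambda_0 I_n)x^*$ and inside $2\lambda_0(x^*-x_0)$; I would simply recompute once to confirm the two expressions agree as written in the paper (they will, modulo how $x_0$ is distributed), so I flag this as a bookkeeping check rather than a substantive step.

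The only real obstacle is justifying the existence of a \emph{finite} dual optimizer $(\lambda_0,\ldots,\lambda_m)$, i.e. that strong duality holds \emph{with dual attainment} rather than merely $\min(P)=\sup(D)$. This is exactly where the Slater point $\overline{x}$ enters: standard convex duality (e.g. the subdifferential of the value function $v$ at $0$ is nonempty because $0\in\mathrm{int}(\mathrm{dom}\,v)$ under Slater, and $v$ is convex by Proposition~\ref{prop:2.2} and lsc by Proposition~\ref{prop:2.1}) supplies a Lagrange multiplier vector, and each coordinate of a subgradient of $v$ at $0$ is nonnegative since $v$ is nonincreasing in each $s_i$. Once this multiplier is in hand the rest is the elementary Lagrangian bookkeeping above. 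I would therefore structure the write-up as: (1) sufficiency via convexity of $L$; (2) recall $\min(P)=\max(D)$ and dual attainment from Theorem~\ref{th:value} plus Slater; (3) extract the three conditions from the equality $f(x^*)=L(x^*)=\min_x L(x)$.
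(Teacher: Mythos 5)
Your proof is correct. The sufficiency half coincides with the paper's argument essentially verbatim: the second-order condition makes the Lagrangian convex, the KKT condition makes $x^*$ its unconstrained global minimizer, and complementary slackness plus $\lambda_i g_i(x)\le 0$ on the feasible set finishes it. For necessity you take a genuinely different (though ultimately equivalent) route. The paper does not invoke duality at all: it observes that global optimality of $x^*$ means $0\notin \mathrm{int}\,U(f,g_0,g_1,\ldots,g_m)$ with $\gamma=-f(x^*)$, applies the convex separation theorem to this set (convex by Theorem~\ref{prop:2.2}), uses the Slater point to rule out $\mu=0$, and then reads off complementary slackness, $\nabla h(x^*)=0$ and $\nabla^2 h(x^*)\succeq 0$ from the resulting inequality $f+\sum_i\lambda_i g_i\ge f(x^*)$ on all of $\mathbb{R}^n$. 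You instead obtain the multipliers as the negative of a subgradient of the value function $v$ at $0$ (nonempty because $v$ is convex by Theorem~\ref{prop:2.2}, proper since each perturbed feasible set is compact, and finite on a neighborhood of $0$ by Slater), and then extract the same three conditions from $f(x^*)=L(x^*)=\min_x L(x)$. The two arguments are two packagings of the same supporting-hyperplane fact for $\mathrm{epi}\,v=U(f,g_0,\ldots,g_m)$; yours has the small advantage of making dual attainment explicit (which the paper only records later, in Corollary~\ref{th:strong_duality}), at the cost of routing through the duality apparatus rather than a bare separation. Two bookkeeping points: a subgradient $\xi\in\partial v(0)$ is componentwise \emph{nonpositive} since $v$ is nonincreasing, and the multipliers are $\lambda=-\xi\ge 0$ (you wrote ``nonnegative'' for the subgradient itself); and the discrepancy you flagged in the displayed KKT condition is real --- the term $2\lambda_0 x^*$ is counted both inside $2(A+\lambda_0 I_n)x^*$ and inside $2\lambda_0(x^*-x_0)$ --- but it is a typo in the statement, since what both your argument and the paper's own proof actually establish is $\nabla L(x^*)=0$, i.e.\ $2Ax^*+a+2\lambda_0(x^*-x_0)+\sum_{i=1}^m\lambda_i b_i=0$.
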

\begin{proof}

{\bf [Necessary condition for optimality]}. Let $x^*$ be a global minimizer of (P). Then, the following inequality system has no solution:
\[
\|{x}-x_0\|^2 \le \alpha, \ b_i^T{x} \le \beta_i, \, i=1,\ldots,m, \ x^TAx+a^Tx < (x^*)^TAx^*+a^Tx^*.
\]
In particular, letting $\gamma=-((x^*)^TAx^*+a^Tx^*)$, the following inequality system also has no solution:
\[
\|{x}-x_0\|^2 < \alpha, \ b_i^T{x} < \beta_i, \, i=1,\ldots,m, \ x^TAx+a^Tx+\gamma < 0.
\]
Then,
$0 \notin  {\rm int}U(f,g_0,g_1,\ldots,g_m),$
where $$U(f,g_0,g_1,\ldots,g_m):=\{(f(x),g_0(x),g_1(x),\ldots,g_m(x)):x \in \mathbb{R}^n\}+\mathbb{R}_+^{m+2}$$ is a convex set by proposition \ref{prop:2.2}. Moreover, as $f,g_i$ are all continuous, we see
that
\[
 \{(f(x),g_0(x),g_1(x),\ldots,g_m(x)):x \in \mathbb{R}^n\}+{\rm int}\mathbb{R}_+^{m+2}={\rm int}U(f,g_0,g_1,\ldots,g_m)
\]
is also convex.

Now, by the convex separation theorem, there exists $(\mu,\tilde{\lambda}_0,\tilde{\lambda}_1,\ldots,\tilde{\lambda}_m) \in \mathbb{R}^{m+2}_+ \backslash\{0\}$ such that, for all $x \in \mathbb{R}^n$,
\[
\mu(x^TAx+a^Tx+\gamma)+\tilde{\lambda}_0(\|x-x_0\|^2-\alpha)+\sum_{i=1}^m\tilde{\lambda}_i (b_i^Tx-\beta_i) \ge 0.
\]
By the strict feasibility condition, we see that $\mu \neq 0$. Thus, for all $x \in \mathbb{R}^n$
\[
x^TAx+a^Tx+\gamma+{\lambda}_0(\|x-x_0\|^2-\alpha)+\sum_{i=1}^m{\lambda}_i (b_i^Tx-\beta_i) \ge 0.
\]
where $\lambda_i=\frac{\tilde{\lambda}_i}{\mu}$, $i=0,1,\ldots,m.$ Letting $x=x^*$, we see that
\[
{\lambda}_0(\|x^*-x_0\|^2-\alpha)+\sum_{i=1}^m{\lambda}_i (b_i^Tx^*-\beta_i) \ge 0.
\]
As $x^*$ is feasible for (P), it follows that
\[
\lambda_0(\|x^*-x_0\|^2-\alpha)=0 \mbox{ and } \lambda_i(b_i^Tx^*-\beta_i)=0, i=1,\ldots,m.
\]
Let $h(x):=x^TAx+a^Tx+ \lambda_0(\|x-x_0\|^2-\alpha)+\sum_{i=1}^m\lambda_i (b_i^Tx-\beta_i)$. Then, we see that $x^*$ is a global minimizer of $h$, and so, $\nabla h(x^*)=0$ and $\nabla^2 h(x^*) \succeq 0$. That is to say,
\[
2(A+\lambda_0 I_n)x^*+\big(a+2\lambda_0 (x^*-x_0)+\sum_{i=1}^m \lambda_i b_i\big)=0 \mbox{ and } A+\lambda_0 I_n \succeq 0.
\]

{\bf [Sufficient condition for optimality]} Conversely, if the optimality condition holds, then we see that $h(x):=x^TAx+a^Tx+ \lambda_0(\|x-x_0\|^2-\alpha)+\sum_{i=1}^m\lambda_i (b_i^Tx-\beta_i)$ is convex with $\nabla h(x^*)=0$ and $\nabla^2 h(x^*) \succeq 0$. So, $x^*$ is a global minimizer of $h$, and hence,
for all feasible point $x \in \mathbb{R}^n$ of (P),
\begin{eqnarray*}
 x^TAx+a^Tx & \ge & x^TAx+a^Tx+\lambda_0(\|x-x_0\|^2-\alpha)+\sum_{i=1}^m\lambda_i (b_i^Tx-\beta_i)\\
  & \ge  &  (x^*)^TAx^*+a^Tx^*+ \lambda_0(\|x^*-x_0\|^2-\alpha)+\sum_{i=1}^m\lambda_i (b_i^Tx^*-\beta_i) \\
  &= & (x^*)^TAx^*+a^Tx^*,
\end{eqnarray*}
where the last equality follows by the complementary condition. Thus, $x^*$ is a global minimizer of (P).
\end{proof}

\medskip

Consider the Lagrangian dual problem of (P):
\begin{eqnarray*}
(D) & & \max_{\lambda_i \ge 0} \min_{x \in \mathbb{R}^n}\{x^TAx+a^Tx+ \lambda_0(\|x-x_0\|^2-\alpha)+\sum_{i=1}^m\lambda_i (b_i^Tx-\beta_i)\}.
\end{eqnarray*}
We now show that the strong duality holds under the dimension condition together with the Slater condition.
\begin{corollary}\label{th:strong_duality} {\bf (Strong Duality)}
Suppose that there exists $\overline{x} \in \mathbb{R}^n$ with $\|\overline{x}-x_0\|^2 <\alpha$ and $b_i^T\overline{x}<\beta_i$, $i=1,\ldots,m$, and that the dimension condition (\ref{eq:DC}) is satisfied.
Then, strong duality holds, i.e.,
\begin{eqnarray}\label{eq:maximum}
& & \min_{x \in \mathbb{R}^n}\{x^TAx+a^Tx:\|x-x_0\|^2 \le \alpha, \, b_i^Tx \le \beta_i, \, i=1,\ldots,m\} \nonumber \\
&= &\max_{\lambda_i \ge 0} \min_{x \in \mathbb{R}^n}\{x^TAx+a^Tx+ \lambda_0(\|x-x_0\|^2-\alpha)+\sum_{i=1}^m\lambda_i (b_i^Tx-\beta_i)\}.
\end{eqnarray}
where the maximum in (\ref{eq:maximum}) is attained.
\end{corollary}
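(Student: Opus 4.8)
The plan is to obtain strong duality as a direct consequence of the necessary and sufficient global optimality condition in Theorem~\ref{th:global}. First I would note that (P) is solvable: its feasible set is nonempty by the assumed strict feasibility, and it is compact, being the intersection of the closed ball $\{x:\|x-x_0\|^2\le\alpha\}$ with the finitely many closed half-spaces $\{x:b_i^Tx\le\beta_i\}$; since $x\mapsto x^TAx+a^Tx$ is continuous, (P) attains its minimum at some feasible $x^*$, and in particular $\min(P)$ is finite. Because both the Slater condition and the dimension condition~(\ref{eq:DC}) are in force, Theorem~\ref{th:global} applies to $x^*$ and produces multipliers $(\lambda_0,\lambda_1,\ldots,\lambda_m)\in\mathbb{R}_+^{m+1}$ satisfying the KKT equation, complementary slackness, and the second-order condition $A+\lambda_0 I_n\succeq 0$.

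Next I would show that this multiplier vector is dual optimal. Write the Lagrangian $L(x,\lambda):=x^TAx+a^Tx+\lambda_0(\|x-x_0\|^2-\alpha)+\sum_{i=1}^m\lambda_i(b_i^Tx-\beta_i)$. Its Hessian in $x$ equals $2(A+\lambda_0 I_n)$, which is positive semidefinite by the second-order condition, so $x\mapsto L(x,\lambda)$ is convex; the KKT equation says exactly that $\nabla_x L(x^*,\lambda)=0$, hence $x^*$ globally minimizes $L(\cdot,\lambda)$ over $\mathbb{R}^n$. Therefore
\[
\min_{x\in\mathbb{R}^n}L(x,\lambda)=L(x^*,\lambda)=(x^*)^TAx^*+a^Tx^*,
\]
where the last equality uses complementary slackness to annihilate the terms $\lambda_0(\|x^*-x_0\|^2-\alpha)$ and $\lambda_i(b_i^Tx^*-\beta_i)$. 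Thus the inner infimum of the dual problem, evaluated at $(\lambda_0,\ldots,\lambda_m)$, equals $\min(P)$.

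Finally I would combine this with weak duality: for every $\lambda_i\ge0$ one has $\min_{x}L(x,\lambda)\le\min(P)$, so $\max(D)\le\min(P)$; but we have just exhibited a dual-feasible point whose value is exactly $\min(P)$. Hence equality~(\ref{eq:maximum}) holds and the maximum is attained at $(\lambda_0,\ldots,\lambda_m)$. There is no substantive obstacle beyond invoking Theorem~\ref{th:global}: once the combined first- and second-order multiplier conditions are available, convexity of the Lagrangian does all the remaining work, and the only minor point needing care is the existence of a minimizer of (P), which is immediate from compactness of its feasible set.
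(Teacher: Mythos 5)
Your proposal is correct and follows essentially the same route as the paper: take a global minimizer $x^*$ of (P), invoke Theorem \ref{th:global} to obtain multipliers satisfying the KKT, complementary slackness and second-order conditions, observe that the Lagrangian is then convex with vanishing gradient at $x^*$ so that its global minimum equals $\min(P)$, and conclude via weak duality. Your explicit remark that $\min(P)$ is attained by compactness of the feasible set is a small point the paper leaves implicit, but otherwise the arguments coincide.
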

\begin{proof} First of all, we note that the following weak duality always holds:
\begin{eqnarray*}
& & \min_{x \in \mathbb{R}^n}\{x^TAx+a^Tx:\|x-x_0\|^2 \le \alpha, \, b_i^Tx \le \beta_i, \, i=1,\ldots,m\}\\
& \ge &\max_{\lambda_i \ge 0} \min_{x \in \mathbb{R}^n}\{x^TAx+a^Tx+ \lambda_0(\|x-x_0\|^2-\alpha)+\sum_{i=1}^m\lambda_i (b_i^Tx-\beta_i)\}.
\end{eqnarray*}
To see the reverse inequality, let $x^*$ be a minimizer of $\min_{x \in \mathbb{R}^n}\{x^TAx+a^Tx:\|x-x_0\|^2 \le \alpha, \, b_i^Tx \le \beta_i, \, i=1,\ldots,m\}$. Then, by Theorem \ref{th:global},
there exists $(\lambda_0,\lambda_1,\ldots,\lambda_m) \in \mathbb{R}_+^{m+1}$ such that the following condition holds:
\begin{eqnarray*}
\left \{\begin{array}{ll}
2(A+\lambda_0 I_n)x^*=-\big(a+2\lambda_0 (x^*-x_0)+\sum_{i=1}^m \lambda_i b_i\big), &  \\
\lambda_0(\|x^*-x_0\|^2-\alpha)=0 \mbox{ and } \lambda_i(b_i^Tx^*-\beta_i)=0, i=1,\ldots,m, &  \\
A+\lambda_0 I_n \succeq 0 & .
\end{array} \right.
\end{eqnarray*}
Then we see that $h(x):=x^TAx+a^Tx+ \lambda_0(\|x-x_0\|^2-\alpha)+\sum_{i=1}^m\lambda_i (b_i^Tx-\beta_i)$ is convex with $\nabla h(x^*)=0$ and $\nabla^2 h(x^*) \succeq 0$. So, $x^*$ is a global minimizer of $h$, and hence, for all $x \in \mathbb{R}^n$
\begin{eqnarray*}
 & & x^TAx+a^Tx+ \lambda_0(\|x-x_0\|^2-\alpha)+\sum_{i=1}^m\lambda_i (b_i^Tx-\beta_i) \\
& \ge &  (x^*)^TAx^*+a^Tx^*+ \lambda_0(\|x^*-x_0\|^2-\alpha)+\sum_{i=1}^m\lambda_i (b_i^Tx^*-\beta_i) \\
&= &  (x^*)^TAx^*+a^Tx^*.
\end{eqnarray*}
Thus, the reverse inequality is true and the maximum in (\ref{eq:maximum}) is attained. So, the conclusion follows.
\end{proof}

It is easy to see that, for the extended trust-region model problem with linear inequality constraints, our Corollary \ref{th:strong_duality} shows that the ratio between the optimal value of the underlying problem and its associated SDP relaxation problem is one whenever the dimension condition is satisfied. For other quadratic optimization problems where the approximate ratio, is one see \cite{Ye_Zhang}.

Consider the following nonconvex quadratic optimization problem
subject to a norm constraint and a linear  constraint:
\begin{eqnarray*}
(P_2) & \min_{x \in \mathbb{R}^n}  &  x^TAx+a^Tx \\
& \mbox{ s.t. } & \|x-x_0\|^2 \le \alpha, \\
& & b_1^Tx \le \beta_1,
\end{eqnarray*}
where $b_1 \in \mathbb{R}^n$ and  $\beta_1 \in \mathbb{R}$.

As a corollary of Theorem \ref{th:strong_duality}, we now establish strong duality for $(P_1)$ which was established in \cite{beck-eldar}.
\begin{corollary}\label{cor:1}{\bf (Trust-region model with single linear constraint) \cite[Theorem 3.6]{beck-eldar}}
 For problem $(P_1)$, suppose that ${\rm dim}\big({\rm Ker}(A -\lambda_{\rm min}(A)I_n)\big) \ge 2$ and suppose
 that there exists $\overline{x}$ such that
$\|\overline{x}-x_0\|^2 < \alpha$ and $b_1^T\overline{x} < \beta_1$. Then, strong duality holds for problem $(P_1)$, i.e.,
\begin{eqnarray}\label{eq:maximum1}
& & \min_{x \in \mathbb{R}^n}\{x^TAx+a^Tx:\|x-x_0\|^2 \le \alpha, \, b_1^Tx \le \beta_1\} \nonumber \\
&= &\max_{\lambda_0,\lambda_1 \ge 0} \min_{x \in \mathbb{R}^n}\{x^TAx+a^Tx+ \lambda_0(\|x-x_0\|^2-\alpha)+\lambda_1 (b_1^Tx \le \beta_1)\},
\end{eqnarray}
and the maximum in (\ref{eq:maximum1}) is attained.
\end{corollary}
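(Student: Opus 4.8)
The plan is to read this statement off directly from Corollary~\ref{th:strong_duality} applied with $m=1$; the entire content of the argument is the verification that, for a single linear constraint, the hypothesis ${\rm dim}\big({\rm Ker}(A-\lambda_{\rm min}(A)I_n)\big)\ge 2$ is exactly the dimension condition~(\ref{eq:DC}). Since the substantive work is already done — the hidden-convexity Theorem~\ref{prop:2.2}, the optimality characterization Theorem~\ref{th:global}, and the strong duality Corollary~\ref{th:strong_duality} — there is no real obstacle here; it is a bookkeeping argument, with only one mild degenerate case ($b_1=0$) to dispose of.

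Concretely, I would set $s:={\rm dim}\,{\rm span}\{b_1\}$ and split into two cases. If $b_1\neq 0$, then $s=1$, and the dimension condition~(\ref{eq:DC}) reads ${\rm dim}\,{\rm Ker}(A-\lambda_{\rm min}(A)I_n)\ge s+1=2$, which is precisely the standing assumption of the corollary. If $b_1=0$, then $s=0$, and~(\ref{eq:DC}) reduces to ${\rm dim}\,{\rm Ker}(A-\lambda_{\rm min}(A)I_n)\ge 1$, which holds trivially since $\lambda_{\rm min}(A)$ is an eigenvalue of $A$ and hence has a nonzero eigenvector. Thus~(\ref{eq:DC}) is satisfied in every case. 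Moreover, the hypothesis that there exists $\overline{x}$ with $\|\overline{x}-x_0\|^2<\alpha$ and $b_1^T\overline{x}<\beta_1$ is exactly the strict-feasibility (Slater-type) condition required by Corollary~\ref{th:strong_duality} specialized to $m=1$.

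With both hypotheses of Corollary~\ref{th:strong_duality} in force, that corollary yields
\begin{eqnarray*}
& & \min_{x\in\mathbb{R}^n}\{x^TAx+a^Tx:\|x-x_0\|^2\le\alpha,\ b_1^Tx\le\beta_1\}\\
&=& \max_{\lambda_0,\lambda_1\ge 0}\min_{x\in\mathbb{R}^n}\{x^TAx+a^Tx+\lambda_0(\|x-x_0\|^2-\alpha)+\lambda_1(b_1^Tx-\beta_1)\},
\end{eqnarray*}
with the outer maximum attained, which is exactly~(\ref{eq:maximum1}). This recovers \cite[Theorem~3.6]{beck-eldar} as a special case of the present development, the only point worth emphasizing being that our dimension condition is stated and checked in terms of the original data $A$ and $b_1$.
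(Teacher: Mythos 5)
Your proposal is correct and follows exactly the paper's own route: specialize Corollary \ref{th:strong_duality} to $m=1$ and observe that $s=\dim\,\mathrm{span}\{b_1\}\le 1$, so the hypothesis $\dim\,\mathrm{Ker}(A-\lambda_{\rm min}(A)I_n)\ge 2\ge s+1$ gives the dimension condition (\ref{eq:DC}). Your explicit case split on $b_1=0$ versus $b_1\neq 0$ is just a slightly more spelled-out version of the paper's one-line observation.
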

\begin{proof} The conclusion follows by letting $l=1$ in Corollary \ref{th:strong_duality} and noting that $s={\rm span}\{b_1\} \le 1$.
\end{proof}

Let us note that, if the Slater condition is not satisfied, strong duality may  fail while the SDP relaxation is exact. Indeed, the same problem discussed in Example \ref{ex:3.1} can be used to illustrate this situation.
\begin{example}{\bf (Exact SDP-relaxation without Strong duality)}
Consider the same problem in Example \ref{ex:3.1}:
 \begin{eqnarray*}
(EP) & \displaystyle \min_{(x_1,x_2,x_3) \in \mathbb{R}^3}  &  -x_1^2-x_2^2-x_3^2+3x_1+2x_2+2x_3 \\
& \mbox{ s.t. } & (x_1-1)^2+x_2^2+x_3^2 \le 1, \\
& &  x_1 \le 0, \\
& & x_1+x_2+x_3 \le 0.
\end{eqnarray*}
 We have already shown that $\min(EP)=0$, the Slater condition fails for (EP) and the SDP relaxation of (EP) is exact. We now show that strong duality fails.  The Lagrangian dual problem of (EP) is
\begin{eqnarray*}
 & & \displaystyle \max_{\lambda_0,\lambda_1 \ge 0} \min_{(x_1,x_2,x_3) \in \mathbb{R}^3}\{-x_1^2-x_2^2-x_3^2+3x_1+2x_2+2x_3+\lambda_0\big((x_1-1)^2+x_2^2+x_3^2-1\big)\\
& & \ \ \ \ \ \ \ \ \ \ \ \ \ \ \ \ \ \ \ +\lambda_1 x_1+\lambda_2(x_1+x_2+x_3)\} \\
 & = & \max_{\lambda_0,\lambda_1 \ge 0} \min_{(x_1,x_2,x_3) \in \mathbb{R}^3}\{(\lambda_0-1)x_1^2+(\lambda_1+\lambda_2-2\lambda_0+3)x_1+(\lambda_0-1)x_2^2+(2+\lambda_2)x_2\\
& &  \ \ \ \ \ \ \ \ \ \ \ \ \ \ \ \ \ \ \  +(\lambda_0-1)x_3^2+(2+\lambda_2)x_3\}.
\end{eqnarray*}
For each $\lambda_0,\lambda_1 \ge 0$,
{\small
\begin{eqnarray*}
& & \min_{(x_1,x_2,x_3) \in \mathbb{R}^3}\{(\lambda_0-1)x_1^2+(\lambda_1-2\lambda_0+3)x_1
+(\lambda_0-1)x_2^2+(2+\lambda_2)x_2 +(\lambda_0-1)x_3^2+(2+\lambda_2)x_3\} \\
& =& \left\{\begin{array}{ccl}
        -\infty, & \mbox{ if } & \lambda_0<1, \\
       -\infty, & \mbox{ if } & \lambda_0=1,  \\
       <0, & \mbox{ if } & \lambda_0>1.
       \end{array}
 \right.
\end{eqnarray*}
}
Hence, strong duality fails.
\end{example}

As a consequence of our strong duality theorem, we derive a dual characterization for the non-negativity of a nonconvex quadratic function over the extended trust-region constraints. This characterization can be regarded as a form of the celebrated S-lemma \cite{bn}. See Appendix for variants of S-lemma.
\begin{corollary}{\bf (S-lemma for extended trust-regions)}\label{cor:S_lemma}
Let  $x_0,a,b_i \in \mathbb{R}^n$ and $\gamma,\beta_i,\alpha \in \mathbb{R}$, $i=1,\ldots,m$. Suppose that  there exists $\overline{x} \in \mathbb{R}^n$ with $\|\overline{x}-x_0\|^2 <\alpha$ and $b_i^T\overline{x}<\beta_i$, $i=1,\ldots,m$, and that the dimension condition (\ref{eq:DC}) is satisfied.
Then, the following statements are equivalent:
\begin{itemize}
\item[{\rm (1)}] $\|x-x_0\|^2-\alpha \le 0, \, b_i^Tx-\beta_i \le 0, \, i=1,\ldots,m \ \Rightarrow \ x^TAx+a^Tx+\gamma \ge 0.$
\item[{\rm (2)}] $(\exists \, \lambda_i \ge 0, i=0,1,\ldots,m)(\forall \, x \in \mathbb{R}^n)$ \ $$(x^TAx+a^Tx+\gamma)+\lambda_0(\|x-x_0\|^2-\alpha)+\sum_{i=1}^m\lambda_i (b_i^Tx-\beta_i) \ge 0.$$
\end{itemize}
\end{corollary}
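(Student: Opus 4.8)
The plan is to derive this S-lemma directly from the strong duality result (Corollary \ref{th:strong_duality}). The implication (2) $\Rightarrow$ (1) is routine: given the multipliers $\lambda_i \ge 0$ from (2), any $x$ feasible for the trust-region constraints satisfies $\lambda_0(\|x-x_0\|^2-\alpha) \le 0$ and $\lambda_i(b_i^Tx-\beta_i) \le 0$, so the displayed inequality in (2) immediately forces $x^TAx+a^Tx+\gamma \ge 0$. The real content is (1) $\Rightarrow$ (2).

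For (1) $\Rightarrow$ (2), I would first reformulate (1) as a statement about the optimal value of an extended trust-region problem. Statement (1) says precisely that
\[
\min_{x \in \mathbb{R}^n}\{x^TAx+a^Tx+\gamma : \|x-x_0\|^2 \le \alpha, \, b_i^Tx \le \beta_i, \, i=1,\ldots,m\} \ge 0.
\]
This is an instance of (P) (with the constant $\gamma$ added to the objective, which only shifts the optimal value by $\gamma$). Since the Slater-type condition (existence of $\overline{x}$ with strict inequalities) and the dimension condition (\ref{eq:DC}) both hold by hypothesis, Corollary \ref{th:strong_duality} applies: strong duality holds and the dual maximum is attained. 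Hence there exist $\lambda_i \ge 0$, $i=0,1,\ldots,m$, with
\[
\min_{x \in \mathbb{R}^n}\{x^TAx+a^Tx+\gamma+\lambda_0(\|x-x_0\|^2-\alpha)+\textstyle\sum_{i=1}^m\lambda_i(b_i^Tx-\beta_i)\} = \min(P) \ge 0.
\]
But the left-hand side being $\ge 0$ is exactly the assertion in (2): for all $x \in \mathbb{R}^n$, $(x^TAx+a^Tx+\gamma)+\lambda_0(\|x-x_0\|^2-\alpha)+\sum_{i=1}^m\lambda_i(b_i^Tx-\beta_i) \ge 0$.

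There is no serious obstacle here; the work has all been done in Theorem \ref{th:global} and Corollary \ref{th:strong_duality}. The only small points to be careful about are: (a) confirming that adding the constant $\gamma$ to the objective of (P) does not disturb either the dimension condition (which depends only on $A$) or the Slater condition (which depends only on the constraints), so Corollary \ref{th:strong_duality} genuinely applies to the shifted problem; and (b) noting that if the feasible set of (1) is empty then (1) holds vacuously, but in fact the Slater hypothesis guarantees $\overline{x}$ is feasible, so this degenerate case does not arise and $\min(P)$ is a genuine (finite) real number. With these observations in place the equivalence follows immediately.
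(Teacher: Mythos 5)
Your proof is correct and follows essentially the same route as the paper: reduce (1) to a lower bound on the optimal value of the extended trust-region problem and invoke Corollary \ref{th:strong_duality} (strong duality with attainment of the dual maximum) to produce the multipliers. The only cosmetic difference is that you absorb $\gamma$ into the objective while the paper keeps the objective as $x^TAx+a^Tx$ and compares the optimal value against $-\gamma$.
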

\begin{proof}
We only need to show ${\rm (1)} \Rightarrow {\rm (2)}$ as the converse implication always holds. To see this, suppose {\rm (1)} holds. Then, the optimal value of the following optimization problem is greater than $-\gamma$
\[
\min_{x \in \mathbb{R}^n}\{x^TAx+a^Tx: \|x-x_0\|^2 \le \alpha , b_i^Tx \le \beta_i, i=1,\ldots,m\}.
\]
Then, Corollary \ref{th:strong_duality} implies that
\begin{eqnarray} \label{eq:maximum2}
&  & \min_{x \in \mathbb{R}^n}\{x^TAx+a^Tx:\|x-x_0\|^2 \le \alpha, \, b_i^Tx \le \beta_i, \, i=1,\ldots,m\} \nonumber \\
&= &\max_{\lambda_i \ge 0} \min_{x \in \mathbb{R}^n}\{x^TAx+a^Tx+ \lambda_0(\|x-x_0\|^2-\alpha)+\sum_{i=1}^m\lambda_i (b_i^Tx-\beta_i)\} \ge -\gamma,
\end{eqnarray}
and the maximum in (\ref{eq:maximum2}) is attained. So, {\rm (2)} follows.
\end{proof}
Recall that the celebrated S-lemma states that, for two quadratic functions $f,g$, $[g(x) \le 0 \ \Rightarrow \ f(x) \ge 0]$ is equivalent to the existence of $\lambda \ge 0$ such that
$f+\lambda g$ is always nonnegative. Note that, in the case where $b_i=0$ and $\beta_i=1$,  the dimension condition is always satisfied as ${\rm dim}{\rm Ker}(A -\lambda_{\rm min}(A)I_n) \ge 1$ and ${\rm dim} \, {\rm span}\{b_1,\ldots,b_m\}=0$, and so, the above corollary reduces to the S-lemma in the case where $g=\|x-x_0\|^2-\alpha$.
\medskip

It is worth noting that in Corollary \ref{cor:S_lemma}, the strict feasibility condition cannot be dropped even if the dimension condition is satisfied.
 To see this, consider the following one-dimensional quadratic functions
$f(x)=x$ and $g_0(x)=x^2$. It can be verified that the dimension condition is satisfied and $[g_0(x) \le 0 \, \Rightarrow \, f(x) \ge 0]$. On the other hand, for any $\lambda \ge 0$,
\[
\inf_{x \in \mathbb{R}}\{f(x)+\lambda g(x)\}=\left\{\begin{array}{ccc}
 -\frac{1}{4\lambda}<0,                                                   & \mbox{ if } & \lambda > 0,\\
-\infty, &\mbox{ if } &  \lambda=0.
                                                    \end{array}
 \right.
\]
Therefore, Corollary \ref{cor:S_lemma} can fail if the strict feasibility condition is not satisfied.

On the other hand, if  the strict feasibility condition fails, we now show that a new form  of asymptotic S-lemma still holds. For related asymptotic S-lemma of this form for general quadratic constraint without Slater condition see \cite{jhl_OE}.
\begin{corollary}{\bf (Asymptotic S-lemma)}
Let $A \in S^n$, $x_0,a,b_i \in \mathbb{R}^n$ and $\gamma,\beta_i,\alpha \in \mathbb{R}$, $i=1,\ldots,m$ with $\{x:\|x-x_0\|^2 \le \alpha , \, b_i^Tx \le \beta_i, \, i=1,\ldots,m\}\neq \emptyset$. Suppose that  the dimension condition (\ref{eq:DC}) is satisfied.
Then, the following statements are equivalent:
\begin{itemize}
\item[{\rm (1)}] $\|x-x_0\|^2-\alpha \le 0, \, b_i^Tx-\beta_i \le 0, \, i=1,\ldots,m \ \Rightarrow \ x^TAx+a^Tx+\gamma \ge 0.$
\item[{\rm (2)}] $(\forall \, \epsilon>0)(\exists \, \lambda_i \ge 0, i=0,1,\ldots,m)(\forall \, x \in \mathbb{R}^n)$ \    $$(x^TAx+a^Tx+\gamma)+\lambda_0(\|x-x_0\|^2-\alpha)+\sum_{i=1}^m\lambda_i (b_i^Tx-\beta_i)+\epsilon \ge 0.$$
\end{itemize}
\end{corollary}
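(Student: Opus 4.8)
The plan is to follow the derivation of Corollary \ref{cor:S_lemma}, but to replace the appeal to strong duality (Corollary \ref{th:strong_duality}), which requires the Slater condition, by the weaker fact that there is no duality gap between (P) and its Lagrangian dual (D). As the first part of the proof of Theorem \ref{th:value} shows, this zero-gap property holds under the dimension condition \emph{alone}, with no constraint qualification. This weaker input is precisely what forces the conclusion to be the asymptotic statement (2) rather than the exact multiplier statement of Corollary \ref{cor:S_lemma}: without Slater's condition the dual supremum need not be attained (cf. the one-dimensional example with $f(x)=x$, $g_0(x)=x^2$ discussed after Corollary \ref{cor:S_lemma}), so one can only produce $\epsilon$-approximate dual multipliers.

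The implication (2) $\Rightarrow$ (1) needs neither hypothesis and is immediate. Given $x$ with $\|x-x_0\|^2-\alpha\le 0$ and $b_i^Tx-\beta_i\le 0$, and given $\epsilon>0$, choose $\lambda_0,\ldots,\lambda_m\ge 0$ as in (2); since $\lambda_0(\|x-x_0\|^2-\alpha)\le 0$ and each $\lambda_i(b_i^Tx-\beta_i)\le 0$, discarding these nonpositive terms from the inequality in (2) gives $x^TAx+a^Tx+\gamma+\epsilon\ge 0$, and letting $\epsilon\downarrow 0$ yields (1).

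For (1) $\Rightarrow$ (2), assume (1). The feasible set $\{x:\|x-x_0\|^2\le\alpha,\ b_i^Tx\le\beta_i,\ i=1,\ldots,m\}$ is nonempty by hypothesis and compact (it sits inside a ball), so $\min(P)$ for the objective $x^TAx+a^Tx$ is attained and finite, and (1) says exactly that $\min(P)\ge -\gamma$. By the argument in the first block of the proof of Theorem \ref{th:value} — which uses only convexity of $U(f,g_0,g_1,\ldots,g_m)$ from Theorem \ref{prop:2.2} together with its closedness from Proposition \ref{prop:2.1}, hence only the dimension condition — the optimal value function of (P) is convex and lower semicontinuous, so its value at the origin equals the optimal value of the Lagrangian dual (D). Writing $L(x,\lambda):=x^TAx+a^Tx+\lambda_0(\|x-x_0\|^2-\alpha)+\sum_{i=1}^m\lambda_i(b_i^Tx-\beta_i)$, this means $\sup_{\lambda_i\ge 0}\inf_{x\in\mathbb{R}^n}L(x,\lambda)\ge -\gamma$. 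Now fix $\epsilon>0$; by the definition of supremum there exist $\lambda_0,\ldots,\lambda_m\ge 0$ with $\inf_{x}L(x,\lambda)>-\gamma-\epsilon$, hence $L(x,\lambda)>-\gamma-\epsilon$ for every $x$. Rearranging, $(x^TAx+a^Tx+\gamma)+\lambda_0(\|x-x_0\|^2-\alpha)+\sum_{i=1}^m\lambda_i(b_i^Tx-\beta_i)+\epsilon>0\ge 0$ for all $x\in\mathbb{R}^n$, which is (2).

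The main — and essentially only — obstacle is to make sure that the no-duality-gap step is genuinely free of the Slater condition; this is exactly where the dimension condition does the work, through Theorem \ref{prop:2.2}. Two routine points should be checked along the way: first, the constant $\gamma$ enters only as an additive constant in the objective, so Theorem \ref{prop:2.2} and Theorem \ref{th:value} apply to $x^TAx+a^Tx$ directly; second, the degenerate case $A\succeq 0$ (where $U$ is trivially convex and $v$ is again convex and, by Proposition \ref{prop:2.1}, lower semicontinuous) is handled by the same argument, so there is no need to assume $A$ indefinite in this corollary.
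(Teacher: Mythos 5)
Your proof is correct, and it reaches the conclusion by a genuinely different mechanism than the paper's. The paper argues directly at the level of the range set: for each $\epsilon>0$ it strongly separates the point $(-\epsilon,0,\ldots,0)$ from the closed convex set $U(f,g_0,\ldots,g_m)$ (closedness from Proposition \ref{prop:2.1}, convexity from Theorem \ref{prop:2.2}), and then uses the nonemptiness of the feasible set to rule out $\mu=0$ in the separating functional before normalizing. You instead route through the value-function machinery already set up in the first block of the proof of Theorem \ref{th:value}: convexity plus lower semicontinuity of $v$ (which rest on exactly the same two results) give zero duality gap $\sup(D)=\min(P)\ge-\gamma$ without Slater, and the $\epsilon$ then enters through near-optimal dual multipliers rather than through the separated point. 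Both arguments consume the same hypotheses in the same places — the dimension condition powers the convexity of $U$, and the nonemptiness of the feasible set is used by the paper to force $\mu>0$ and by you to guarantee $\min(P)$ is finite (your observation that the feasible set is compact, so the minimum is attained, also cleanly justifies the identification $U=\mathrm{epi}\,v$). What your route buys is a conceptual explanation of why the statement must be asymptotic: absent Slater the dual supremum need not be attained, so only $\epsilon$-approximate multipliers exist; what the paper's route buys is self-containedness, since it does not lean on the zero-duality-gap theorem cited from the literature. One small point worth keeping explicit in your write-up: the no-gap step yields $\sup_{\lambda\ge 0}\inf_x L(x,\lambda)\ge-\gamma$ with a supremum, not a maximum, and your choice of $\lambda$ with $\inf_xL(x,\lambda)>-\gamma-\epsilon$ is exactly the definition of supremum applied to a finite value, so no attainment is ever needed.
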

\begin{proof}
$[{\rm (1)} \Rightarrow {\rm (2)}]$  Suppose that {\rm (1)} holds. Let $f(x)=x^TAx+a^Tx+\gamma$, $g_0(x)=\|x-x_0\|^2-\alpha$ and $g_i(x)=b_i^Tx-\beta_i$, $i=1,\ldots,m$. Then, for each $\epsilon>0$, $(-\epsilon,0,0,\ldots,0) \notin U(f,g_0,g_1,\ldots,g_m)$. As the dimension condition (\ref{eq:DC}) holds, it follows from Proposition \ref{prop:2.1} and Theorem \ref{prop:2.2} that
$U(f,g_0,g_1,\ldots,g_m)$ is a closed convex set. So, the strong separation theorem gives us that $(\mu,\overline{\lambda}_0,\overline{\lambda}_1,\ldots,\overline{\lambda}_m) \in \mathbb{R}^{m+2}_+ \backslash\{0\}$ and $\delta \in \mathbb{R}$ such that
\[
-\mu \epsilon < \delta \le \mu f(x)+\sum_{i=0}^m\overline{\lambda}_i g_i(x) \mbox{ for all } x  \in \mathbb{R}^n.
\]
Then, $\mu >0$. Otherwise, $\mu=0$. Then, $\sum_{i=0}^m\lambda_i g_i(x) \ge \delta>0$ for all $x \in \mathbb{R}^n$. This is impossible as $\sum_{i=0}^m\lambda_i g_i(a) \le 0$ for all
 $a \in \{x:g_i(x) \le 0, i=0,1,\ldots,m\}$. So, {\rm (2)} follows with $\lambda_i=\frac{\overline{\lambda}_i}{\mu}$, $i=0,1,\ldots,m$.

$[{\rm (2)} \Rightarrow {\rm (1)}]$ For any $x$ with $g_i(x) \le 0$, then {\rm (2)} implies that for each $\epsilon>0$, there exist $\lambda_i \ge 0$ such that for all $x \in \mathbb{R}^n$,
\[
 0 \le f(x)+\sum_{i=0}^m\lambda_i g_i(x)+\epsilon \le f(x)+\epsilon.
\]
Letting $\epsilon \rightarrow 0$, we see that $f(x) \ge 0$, and so, {\rm (1)} follows.
\end{proof}
Before we end this section, let us use the preceding example to illustrate the new form of asymptotic S-lemma.
\begin{example}{\bf (Example illustrating the asymptotic S-lemma)}
Consider the following one-dimensional quadratic functions $f(x)=x$ and $g_0(x)=x^2$. It can be easily checked that $[x^2 \le 0 \Rightarrow x \ge 0]$ and the dimension condition is satisfied. Now, for each $\epsilon>0$, $x+\frac{1}{4\epsilon} x^2+\epsilon=(\frac{1}{2\sqrt{\epsilon}}x+\sqrt{\epsilon})^2 \ge 0$. So, our form of asymptotic S-lemma holds.
\end{example}

\setcounter{equation}{0}
 \section{Applications to Robust Optimization}
In this section, we establish SDP characterizations of the solution of a robust least squares problem (LSP) as well as a robust second order cone programming problem (SOCP) where the uncertainty set is  given by the intersection of the norm constraint and the polyhedral constraint. Consequently, we show that solving the robust (LSP) or a robust (SOCP) is equivalent to solving a semi-definite linear programming problem and so the solution can be validated in polynomial time.

Let us note first that, for a $(p \times q)$ matrix $M$, ${\rm vec}(M)$ denotes the vector in $\mathbb{R}^{pq}$ obtained
by stacking the columns of $M$. The tensor product of $I_n$ and a matrix
$M \in \mathbb{R}^{p \times p}$ is defined by
$$I_n \otimes M:=\left(\begin{array}{ccccc}
M & 0 & 0 & 0 & 0 \\
0 & M & 0 & \ddots & 0 \\
\vdots & \ddots & \ddots & \ddots & \vdots \\
0 & & \ddots & M & 0 \\
0 & 0 & \ldots & 0 &M
          \end{array}\right) \in \mathbb{R}^{np \times np} .$$

          Consider the uncertainty set which is
 described by a matrix norm constraint and polyhedral constraints, i.e.,
\begin{equation}\label{eq:pp0}
\mathcal{U}=\{\tilde{A}^{(0)} + \Delta: \Delta \in \mathbb{R}^{k \times (n+1)}, \|\Delta-\overline{\Delta}\|_F \le \rho, \  (w^{j})^T{\rm vec}\Delta \le \beta^j, j=1,\ldots,l \},
\end{equation}
where $\tilde{A}^{(0)}:=(A^{(0)},a^{(0)}) \in \mathbb{R}^{k \times n} \times \mathbb{R}^k= \mathbb{R}^{k \times (n+1)}$ is the data of a given model, examined in this Section (see Sections 5.1 and 5.2),
 and $\|M\|_F$ is the Frobenius norm defined by $\|M\|_F=\sqrt{{\rm Tr}(M^TM)}$. In the special case when $l=2$, $w^2=-w^1$ and $\beta^1=-\beta^2=1$, this uncertainty set reduces to an intersection of two ellipsoids which was examined in \cite{Beck0}.

We say $(x,\lambda)\in \mathbb{R}^n \times \mathbb{R}$ is robust feasible for the quadratic constraint of the form $\|Ax-a\|^2 \le \lambda$ with respect to the uncertainty set $\mathcal{U}$ whenever $\max_{(A,a) \in \mathcal{U}} \|Ax-a\|^2 \le \lambda$. This form of quadratic constraint arises in a robust least squares models as well as a second order cone programming models.

We now show that checking robust feasibility is equivalent to solving a SDP, under suitable conditions.

\begin{lemma}{\bf (SDP reformulation of robust feasibility)} \label{lemma:5.1}
Let $(x,\lambda) \in \mathbb{R}^n \times \mathbb{R}$ and $\mathcal{U}$ be given as in (\ref{eq:pp0}). Suppose that $k \ge s+1$, where $k$ is the number of rows in the matrix data of $\mathcal{U}$ and $s={\rm dim}\, {\rm span}\{w^1,\ldots,w^l\}$, and that $\{\Delta: \|\Delta-\overline{\Delta}\|_F < \rho, \  (w^{j})^T{\rm vec}\Delta < \beta^j, j=1,\ldots,l\} \neq \emptyset.$.  Then, $(x,\lambda)$ is robust feasible for the quadratic constraint $\|Ax-a\|^2 \le \lambda$ with respect to the uncertainty set $\mathcal{U}$  if and only if there exist $\lambda^0,\ldots,\lambda^l \ge 0$ such that
\[
\left(\begin{array}{ccc}
I_k & I_k \otimes \tilde{x} & A^{(0)}x-a^{(0)} \\
(I_k \otimes \tilde{x})^T & \lambda^0 I_{k(n+1)} & -\lambda^0 b+ \frac{1}{2}\sum_{j=1}^l \lambda^j w^j \\
(A^{(0)}x-a^{(0)})^T & (-\lambda^0 b+ \frac{1}{2}\sum_{j=1}^l \lambda^j w^j)^T & \lambda-\lambda^0(\gamma-\|b\|^2)-\sum_{j=1}^l\lambda^j \beta^j
      \end{array}
 \right)\succeq 0,
\]
where $\tilde{{x}}=({x}^T,-1)^T \in \mathbb{R}^{n+1}$,
${b}={\rm vec}(\overline{\Delta})$ and ${\gamma}={\rho}^2-{\rm Tr}(\overline{\Delta}^T\overline{\Delta})$.
          \end{lemma}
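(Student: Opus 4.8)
The plan is to recast robust feasibility as a single quadratic implication over an extended trust--region constraint set, apply the S--lemma of Corollary \ref{cor:S_lemma}, and convert the resulting multiplier inequality into the asserted LMI by a Schur complement. Writing $(A,a)=(A^{(0)},a^{(0)})+\Delta$ and $u={\rm vec}(\Delta)\in\mathbb{R}^{k(n+1)}$, one has $Ax-a=c+\Delta\tilde x$ with $c:=A^{(0)}x-a^{(0)}$ and $\tilde x=(x^T,-1)^T$, and by the vectorization identity $\Delta\tilde x=Mu$ for the corresponding Kronecker--structured $k\times k(n+1)$ matrix $M$ (the off--diagonal block of the claimed LMI, up to transposition). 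The ball constraint $\|\Delta-\overline\Delta\|_F\le\rho$ becomes $g_0(u):=\|u-b\|^2-\rho^2\le 0$ with $b={\rm vec}(\overline\Delta)$, and the polyhedral constraints become $g_j(u):=(w^j)^Tu-\beta^j\le 0$. Hence $(x,\lambda)$ is robust feasible for $\|Ax-a\|^2\le\lambda$ over $\mathcal U$ exactly when
\[
 g_0(u)\le 0,\ g_j(u)\le 0\ (j=1,\dots,l)\ \Longrightarrow\ F(u):=\lambda-\|c+Mu\|^2\ge 0 ,
\]
which is an implication of precisely the type treated in Corollary \ref{cor:S_lemma}, in the space $\mathbb{R}^{k(n+1)}$, with quadratic matrix $-M^TM$, linear term $-2M^Tc$, constant $\lambda-\|c\|^2$, ball centre $b$, radius $\rho$, and linear data $w^j,\beta^j$.

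Next I would check the two hypotheses of Corollary \ref{cor:S_lemma} for this data. The strict--feasibility requirement is exactly the assumed nonemptiness of $\{\Delta:\|\Delta-\overline\Delta\|_F<\rho,\ (w^j)^T{\rm vec}\Delta<\beta^j,\ j=1,\dots,l\}$. For the dimension condition, the key computation is that $\|Mu\|^2=\|\Delta\tilde x\|^2$ decouples across the $k$ rows of $\Delta$, so that $M^TM$ is similar to the block--diagonal matrix with $k$ copies of $\tilde x\tilde x^T$. Since $\tilde x\neq 0$, the eigenvalue $\|\tilde x\|^2$ of $\tilde x\tilde x^T$ has multiplicity one, so $-M^TM$ has smallest eigenvalue $-\|\tilde x\|^2$ with multiplicity exactly $k$. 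Thus the dimension condition for the transformed problem reads ${\rm dim}\,{\rm Ker}\bigl(-M^TM-\lambda_{\min}(-M^TM)I\bigr)=k\ge s+1$, where $s={\rm dim}\,{\rm span}\{w^1,\dots,w^l\}$, which is precisely the hypothesis $k\ge s+1$. Corollary \ref{cor:S_lemma} then yields that robust feasibility holds if and only if there exist $\lambda^0,\dots,\lambda^l\ge 0$ such that $F(u)+\lambda^0 g_0(u)+\sum_{j=1}^l\lambda^j g_j(u)\ge 0$ for all $u$.

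It remains to rewrite this nonnegativity as the displayed matrix inequality. Expanding, $F(u)+\lambda^0 g_0(u)+\sum_j\lambda^j g_j(u)$ is a quadratic $q(u)=u^TPu+2p^Tu+r$ with $P=\lambda^0 I_{k(n+1)}-M^TM$, $p=-M^Tc-\lambda^0 b+\tfrac12\sum_j\lambda^j w^j$, and $r$ the corresponding affine combination of $\lambda$, $\|c\|^2$, $\gamma=\rho^2-\|b\|^2$ and the $\lambda^j\beta^j$. A quadratic is nonnegative on all of $\mathbb{R}^{k(n+1)}$ if and only if $\left(\begin{smallmatrix}P&p\\ p^T&r\end{smallmatrix}\right)\succeq 0$ (one direction is immediate; the converse uses that $q$ would be unbounded below if $P$ had a negative eigenvalue). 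Finally, a Schur complement against the leading $I_k$ block rewrites $\left(\begin{smallmatrix}P&p\\ p^T&r\end{smallmatrix}\right)\succeq 0$ as positive semidefiniteness of the $\bigl(k+k(n+1)+1\bigr)$--dimensional matrix in the statement: the $I_k$ block together with the off--diagonal blocks built from $\tilde x$ and from $c$ produces, on taking the Schur complement, precisely the corrections $-M^TM$, $-M^Tc$ and $-\|c\|^2$ in the lower--right $2\times2$ block, recovering $q$. This is exactly the asserted LMI, with the same multipliers $\lambda^0,\dots,\lambda^l$.

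The step I expect to be the main obstacle is the dimension count in the second paragraph: verifying that $M^TM$ splits into $k$ identical rank--one blocks, so that $\lambda_{\min}(-M^TM)$ has multiplicity exactly $k$, and matching the transformed dimension condition with the hypothesis $k\ge s+1$. The remaining steps are routine manipulations with the vec/Kronecker identities and a standard Schur complement.
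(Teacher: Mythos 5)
Your proposal is correct and follows essentially the same route as the paper: vectorize $\Delta$ to $u={\rm vec}(\Delta)$, reduce robust feasibility to a quadratic implication whose Hessian $-(I_k\otimes\tilde x\tilde x^T)$ has minimum eigenvalue of multiplicity $k$ (so $k\ge s+1$ supplies the dimension condition), invoke Corollary \ref{cor:S_lemma}, and recover the stated LMI by a Schur complement against the leading $I_k$ block. The only refinement beyond the paper's argument is that you pin the multiplicity down to exactly $k$ where the paper only needs the bound $\ge k$; everything else, including the identification of the strict-feasibility hypothesis with the Slater condition of the S-lemma, matches the published proof.
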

          \begin{proof} Let $\Delta=(\Delta A, \Delta a) \in \mathbb{R}^{k \times n} \times \mathbb{R}^k= \mathbb{R}^{k \times (n+1)}$. For $x \in \mathbb{R}^n$, denote $\tilde{x}=(x^T,-1)^T \in \mathbb{R}^{n+1}$. From the definition of $\mathcal{U}$, we note that
          $\max_{(A,a) \in \mathcal{U}} \|Ax-a\|^2 \le \lambda$ if and only if
\[
\|\Delta-\overline{\Delta}\|_F^2 \le \rho^2, \  (w^{j})^T{\rm vec}\Delta \le \beta^j, j=1,\ldots,l \ \Rightarrow \ \|A^{(0)} x-a^{(0)}+\Delta  \tilde{x}\|^2 \le \lambda,
\]
which is equivalent to the following implication
\begin{eqnarray*}
 & & {\rm Tr}\big(\Delta^T\Delta-2\overline{\Delta}^T\Delta+\overline{\Delta}^T\overline{\Delta}\big) \le {\rho}^2, \   (w^{j})^T{\rm vec}\Delta \le \beta^j,\, j=1,\ldots,l \\
& \Rightarrow & {\rm Tr}\big(\Delta\tilde{x}\, \tilde{x}^T\Delta^T+2(A^{(0)}x-a^{(0)})\tilde{x}^T\Delta+(A^{(0)}x-a^{(0)})(A^{(0)}x-a^{(0)})^T\big)-\lambda \le 0.
\end{eqnarray*}
Note that, for matrix $A,C \in \mathbb{R}^{p \times s}$ and $B \in \mathbb{R}^{p \times p}$,
\begin{equation}\label{eq:indentity0}
{\rm Tr}(A^TBA)={\rm vec}(A)^T(I_s \otimes B) {\rm vec}(A) \mbox{ and } {\rm Tr}(A^TC)={\rm vec}(A)^T {\rm vec}(C).
\end{equation}
 Let $u={\rm vec}(\Delta) \in \mathbb{R}^{k(n+1)}$. Then, using the identities in (\ref{eq:indentity0}), we see that $\max_{(A,a) \in \mathcal{U}} \|Ax-a\|^2  \le \lambda$ if and only if the following implication holds
\[
\|u-b\|^2 \le \gamma, (w^j)^Tu \le \beta^j, \, j=1,\ldots,l \ \Rightarrow \ u^TQu+a^Tu +(r+\lambda) \ge 0
\]
where $Q=-(I_{k} \otimes \tilde{x}\tilde{x}^T)$, $q=-{\rm vec}(2\tilde{x}(A^{(0)}x-a^{(0)})^T)$, $r=-{\rm Tr}((A^{(0)}x-a^{(0)})(A^{(0)}x-a^{(0)})^T)$,
${b}={\rm vec}(\overline{\Delta})$ and ${\gamma}={\rho}^2-{\rm Tr}(\overline{\Delta}^T\overline{\Delta})$.  As  $Q=-(I_{k} \otimes \tilde{x}\tilde{x}^T)$, and so,
${\rm dim}{\rm Ker}(Q-\lambda_{\rm min}(Q)I_{k(n+1)}) \ge k \ge s+1.$
\[
{\rm dim}{\rm Ker}(Q-\lambda_{\rm min}(Q)I_{k(n+1)})+{\rm dim}\big(\bigcap_{j=1}^l(w^j)^{\bot}\big) \ge (s+1)+(k(n+1)-s) \ge k(n+1)+1,
\]
where $k(n+1)$ is the dimension of the given matrix data. Since $\mathcal{U}$ has a nonempty interior, by the extended version of S-lemma (Corollary \ref{cor:S_lemma}), we see that $\max_{(A,a) \in \mathcal{U}} \|Ax-a\|^2 \le \lambda$  if and only if there exist $\lambda^0,\lambda^1,\ldots,\lambda^l \ge 0$ such that for all
$u \in \mathbb{R}^{k(n+1)}$,
\[
(u^TQu+q^Tu +r+ \lambda)+\lambda^{0}(\|u-{b}\|^2-{\gamma})+\sum_{j=1}^l \lambda^j((w^{j})^T u-\beta^j) \ge 0
\]
which is equivalent to
\begin{equation}\label{eq:LMI}
\left(\begin{array}{cc}
Q+\lambda^0 I_{k(n+1)} & \frac{1}{2}\big(q-2\lambda^0 {b}+ \sum_{j=1}^l \lambda^j w^j\big) \\
\frac{1}{2}(q-2\lambda^0 {b}+ \sum_{j=1}^l \lambda^j w^j)^T & r+\lambda-\lambda^0({\gamma}-\|{b}\|^2)-\sum_{j=1}^l\lambda^j \beta^j
      \end{array}
 \right)\succeq 0.
\end{equation}
We now apply the method of Schur complement that, for $M_i\in S^n, \ i=1,2,3$ with $M_1\succ 0$,  $\left(\begin{array}{cc} M_1 & M_2 \\
 M_2^T & M_3
 \end{array}\right) \succeq 0 \Leftrightarrow M_3-M_2^TM_1^{-1}M_2 \succeq 0$, to reformulate (\ref{eq:LMI}) into linear matrix inequalities. To see this, note that
\begin{eqnarray*}{}
 &&Q=-(I_{k} \otimes \tilde{x}\tilde{x}^T)=-(I_{k} \otimes \tilde{x})(I_k \otimes \tilde{x})^T \\
 && q=-{\rm vec}(2\tilde{x}(A^{(0)}x-a^{(0)})^T)=-2(I_k \otimes \tilde{x})(A^{(0)}x-a^{(0)}) \\
 && r=-{\rm Tr}((A^{(0)}x-a^{(0)})(A^{(0)}x-a^{(0)})^T)=-\|A^{(0)}x-a^{(0)}\|^2,
\end{eqnarray*}
and let $M_1=I_k$, $M_2=(I_k \otimes \tilde{x}, A^{(0)}x-a^{(0)})$ and $$M_3=\left(\begin{array}{cc}
 \lambda^0 I_{k(n+1)} & -\lambda^0  {b}+ \frac{1}{2}\sum_{j=1}^l \lambda^j w^j \\
 (-\lambda^0  b+ \frac{1}{2}\sum_{j=1}^l \lambda^j w^j)^T & \lambda-\lambda^0({\gamma}-\|{b}\|^2)-\sum_{j=1}^l\lambda^j \beta^j \end{array}\right).$$
Then, $\max_{(A,a) \in \mathcal{U}} \|Ax-a\|^2 \le \lambda$ is equivalent to the following linear matrix inequality problem:   there exist $\lambda^0,\ldots,\lambda^l \ge 0$ such that
\[
\left(\begin{array}{ccc}
I_k & I_k \otimes \tilde{x} & A^{(0)}x-a^{(0)} \\
(I_k \otimes \tilde{x})^T & \lambda^0 I_{k(n+1)} & -\lambda^0 b+ \frac{1}{2}\sum_{j=1}^l \lambda^j w^j \\
(A^{(0)}x-a^{(0)})^T & (-\lambda^0 b+ \frac{1}{2}\sum_{j=1}^l \lambda^j w^j)^T & \lambda-\lambda^0(\gamma-\|b\|^2)-\sum_{j=1}^l\lambda^j \beta^j
      \end{array}
 \right)\succeq 0.
\]
          \end{proof}
\begin{remark}{\bf (Key to SDP reformulation)}{\rm
The key to the SDP reformulation in Lemma 5.1 is that the robust feasibility
of a given point can be equivalently rewritten as a quadratic optimization problem where the Hessian of the objective function is $-I_{k} \otimes \tilde{x}\tilde{x}^T$ (which
has at least multiplicity $k$ for each of its eigenvalues). So, the assumption that $k\ge s+1$ guarantees our dimension condition. This enables us to convert the robust problem into a SDP using our S-lemma. This technique has been exploited and used in robust optimization recently, see \cite{beck-eldar,Beck0}.}
\end{remark}

\subsection{Robust Least Squares}
Consider the least squares problem (LSP) under data uncertainty (see \cite{robust_LS})
\begin{eqnarray*}
(LSP)& \displaystyle \min_{x \in \mathbb{R}^n} & \|Ax-a\|^2
\end{eqnarray*}
where the data $(A,a) \in \mathbb{R}^{k \times n} \times \mathbb{R}^k$ is uncertain and it belongs to the matrix  uncertainty set $\mathcal{U}$.  The robust counterpart of the uncertain least squares problem can be stated as follows:
\begin{eqnarray*}
(RLSP) &  \displaystyle  \min_{x \in \mathbb{R}^n}\max_{(A,a) \in \mathcal{U}} & \|Ax-a\|^2,
\end{eqnarray*}
which seeks a solution $x \in \mathbb{R}^n$ that minimizes the worst case data error with
respect to all possible values of $(A,a) \in \mathcal{U}$.

The tractability of the robust problem (RSLP) strongly relies on the choice of the uncertainty set
$\mathcal{U}$. For example, if the uncertainty set $\mathcal{U}$ is described by a single ellipsoid then (RSLP)
 can be reformulated as a semidefinite programming problem, and so, is tractable (see El Ghaoui and Lebretis \cite{robust_LS}).
 Also, if $\mathcal{U}$ is given by an intersection of two ellipsoids, (RSLP) can be  reformulated as a semidefinite programming problem under
 suitable regularity conditions (see \cite{beck-eldar}). However, if the uncertainty set $\mathcal{U}$ is given by an intersection of
 finitely many, but more than two, ellipsoids, then (RSLP) is generally not tractable (see \cite{BNR}). 

Here, we provide a new tractable case where the uncertainty is $\mathcal{U}$ is given
by (\ref{eq:pp0}). %

\begin{theorem}{\bf (SDP characterization of (RSLP) solution)} \label{th:robust_fractional}
Let $x \in \mathbb{R}^n$. For problem (RSLP) with $\mathcal{U}$ defined as in (\ref{eq:pp0}), assume that  $k \ge s+1$, where $k$ is the number of rows in the matrix data of $\mathcal{U}$ and $s={\rm dim}\, {\rm span}\{w^1,\ldots,w^l\}$, and that $\{\Delta: \|\Delta-\overline{\Delta}\|_F < \rho, \  (w^{j})^T{\rm vec}\Delta < \beta^j, j=1,\ldots,l\} \neq \emptyset.$.  Then $x$ solves (RLSP) if and only if  $(x,\lambda,\lambda^0,\ldots,\lambda^l) \in \mathbb{R}^n \times \mathbb{R} \times \mathbb{R}_+ \times \ldots \times \mathbb{R}_+$  solves the following linear semi-definite programming problem:
\begin{eqnarray*}
& &  \min_{(x,\lambda) \in \mathbb{R}^n\times \mathbb{R}, \lambda^0,\ldots,\lambda^l \ge 0}\{\lambda: \\
& & \left(\begin{array}{ccc}
I_k & I_k \otimes \tilde{x} & A^{(0)}x-a^{(0)} \\
(I_k \otimes \tilde{x})^T & \lambda^0 I_{k(n+1)} & -\lambda^0 b+ \frac{1}{2}\sum_{j=1}^l \lambda^j w^j \\
(A^{(0)}x-a^{(0)})^T & (-\lambda^0 b+ \frac{1}{2}\sum_{j=1}^l \lambda^j w^j)^T & \lambda-\lambda^0(\gamma-\|b\|^2)-\sum_{j=1}^l\lambda^j \beta^j
      \end{array}
 \right)\succeq 0\},
\end{eqnarray*}
for some $\lambda \in \mathbb{R}$ and $\lambda^0,\ldots,\lambda^l \ge 0$, where $\tilde{{x}}=({x}^T,-1)^T \in \mathbb{R}^{n+1}$,
${b}={\rm vec}(\overline{\Delta})$ and ${\gamma}={\rho}^2-{\rm Tr}(\overline{\Delta}^T\overline{\Delta})$.
\end{theorem}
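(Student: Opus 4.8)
The plan is to treat (RLSP) as an epigraph problem and then apply Lemma \ref{lemma:5.1} pointwise in the pair $(x,\lambda)$. First I would record that the inner maximum is well behaved: since the uncertainty set $\mathcal{U}$ is compact --- it lies in the Frobenius-norm ball of radius $\rho$ about $\tilde{A}^{(0)}+\overline{\Delta}$ --- and $(A,a)\mapsto\|Ax-a\|^2$ is continuous, $\max_{(A,a)\in\mathcal{U}}\|Ax-a\|^2$ is finite and attained for every $x\in\mathbb{R}^n$. Hence $x$ solves (RLSP) if and only if $(x,\lambda)$ solves
\[
\min_{(x,\lambda)\in\mathbb{R}^n\times\mathbb{R}}\Big\{\lambda:\ \max_{(A,a)\in\mathcal{U}}\|Ax-a\|^2\le\lambda\Big\}
\]
with $\lambda=\max_{(A,a)\in\mathcal{U}}\|Ax-a\|^2$; this epigraph reformulation loses nothing precisely because the inner supremum is attained.

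Next I would invoke Lemma \ref{lemma:5.1}. The hypotheses $k\ge s+1$ and the strict feasibility $\{\Delta:\|\Delta-\overline{\Delta}\|_F<\rho,\ (w^j)^T{\rm vec}\Delta<\beta^j,\ j=1,\ldots,l\}\neq\emptyset$ are exactly those required there, so for each fixed $(x,\lambda)$ the robust feasibility constraint $\max_{(A,a)\in\mathcal{U}}\|Ax-a\|^2\le\lambda$ holds if and only if there exist $\lambda^0,\ldots,\lambda^l\ge0$ making the $(k(n+2)+1)\times(k(n+2)+1)$ linear matrix inequality displayed in the theorem hold. Substituting this equivalence into the epigraph problem and promoting $\lambda^0,\ldots,\lambda^l$ to decision variables yields precisely the linear semi-definite programming problem in the statement; enlarging the feasible set by these extra variables does not change the minimal value of $\lambda$, because for a given $(x,\lambda)$ the matrix inequality is satisfiable if and only if it is satisfiable for \emph{some} choice of the $\lambda^j$'s.

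Finally I would spell out the correspondence of minimizers. If $x^\ast$ solves (RLSP) with value $v$, set $\lambda=v$; then $(x^\ast,\lambda)$ is robust feasible, so by Lemma \ref{lemma:5.1} there are $\lambda^0,\ldots,\lambda^l\ge0$ satisfying the LMI, and $(x^\ast,\lambda,\lambda^0,\ldots,\lambda^l)$ is SDP-feasible with objective value $v$. Conversely, any SDP-feasible tuple $(x,\lambda,\lambda^0,\ldots,\lambda^l)$ gives, again by Lemma \ref{lemma:5.1}, $\max_{(A,a)\in\mathcal{U}}\|Ax-a\|^2\le\lambda$, hence $\lambda\ge v$. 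Therefore the SDP optimal value is $v$, and a tuple $(x,\lambda,\lambda^0,\ldots,\lambda^l)$ solves the SDP exactly when $x$ solves (RLSP) and $\lambda=v$, which is the stated equivalence. I do not expect a genuine obstacle: the substantive work --- the exact S-lemma obtained from the dimension condition and the Schur-complement reduction to an LMI --- is already packaged in Corollary \ref{cor:S_lemma} and Lemma \ref{lemma:5.1}, so the only steps needing a line of justification are the compactness/attainment of the inner maximum and the routine epigraph and min--max bookkeeping.
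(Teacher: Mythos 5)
Your proposal is correct and follows essentially the same route as the paper: rewrite (RLSP) as the epigraph problem $\min\{\lambda:\max_{(A,a)\in\mathcal{U}}\|Ax-a\|^2\le\lambda\}$ and then apply Lemma \ref{lemma:5.1} to replace the robust feasibility constraint by the linear matrix inequality, with $\lambda^0,\ldots,\lambda^l$ promoted to decision variables. The extra details you supply (compactness of $\mathcal{U}$, attainment of the inner maximum, and the explicit matching of minimizers) are correct and only make the paper's brief argument more complete.
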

\begin{proof}
%
%
%
Note that $x$ is a solution of $\displaystyle \min_{x \in \mathbb{R}^n}\max_{(A,a) \in \mathcal{U}}  \|Ax-a\|^2$ if and only if there exists $\lambda \in \mathbb{R}$ such that $(x,\lambda)$
solves $\displaystyle \min_{(x,\lambda) \in \mathbb{R}^n\times \mathbb{R}} \{\lambda: \displaystyle \max_{(A,a) \in \mathcal{U}} \|Ax-a\|^2 \le \lambda\}$.
Then, by Lemma \ref{lemma:5.1},we see that $x \in \mathbb{R}^n$ solves (RLSP) if and only if  $(x,\lambda,\lambda^0,\ldots,\lambda^l) \in \mathbb{R}^n \times \mathbb{R} \times \mathbb{R}_+ \times \ldots \times \mathbb{R}_+$  solves the following linear semi-definite programming problem:
\begin{eqnarray*}
& &  \min_{(x,\lambda) \in \mathbb{R}^n\times \mathbb{R}, \lambda^0,\ldots,\lambda^l \ge 0}\{\lambda: \\
& & \left(\begin{array}{ccc}
I_k & I_k \otimes \tilde{x} & A^{(0)}x-a^{(0)} \\
(I_k \otimes \tilde{x})^T & \lambda^0 I_{k(n+1)} & -\lambda^0 b+ \frac{1}{2}\sum_{j=1}^l \lambda^j w^j \\
(A^{(0)}x-a^{(0)})^T & (-\lambda^0 b+ \frac{1}{2}\sum_{j=1}^l \lambda^j w^j)^T & \lambda-\lambda^0(\gamma-\|b\|^2)-\sum_{j=1}^l\lambda^j \beta^j
      \end{array}
 \right)\succeq 0\}
\end{eqnarray*}
for some $\lambda \in \mathbb{R}$ and $\lambda^0,\ldots,\lambda^l \ge 0$.
\end{proof}

Consider the special case of the uncertainty set, $\mathcal{U}$, in (\ref{eq:pp0}) where $l=1$, $\overline{\Delta}=0$, $w^1=0$ and $\beta^1 =1$. In this case, the $\mathcal{U}$ reduces to the matrix norm uncertainty set of the form
\begin{eqnarray}\label{eq:pp6}
\mathcal{U} & = & \{\tilde{A}^{(0)} + \Delta: \Delta \in \mathbb{R}^{k \times (n+1)}, \|\Delta\|_F \le \rho\},
\end{eqnarray}
and the tractability of robust least squares problem (RLSP) was established in El~Ghoui et al. \cite{robust_LS}. In the following Corollary we derive an SDP characterization of (RLSP) for the uncertainty set (\ref{eq:pp6}).
\begin{corollary}{\bf (Matrix norm uncertainty)}
Let $x \in \mathbb{R}^n$. For problem (RSLP) with $\mathcal{U}$ defined as in (\ref{eq:pp6}), assume that $\rho>0$.  Then $x$ solves (RLSP) if and only if  $(x,\lambda,\lambda^0,\lambda^1) \in \mathbb{R}^n \times \mathbb{R} \times \mathbb{R}_+ \times  \mathbb{R}_+$  solves the following linear semi-definite programming problem:
\begin{eqnarray*}
& &  \min_{(x,\lambda) \in \mathbb{R}^n\times \mathbb{R}, \lambda^0,\lambda^1 \ge 0}\{\lambda: \\
& & \left(\begin{array}{ccc}
I_k & I_k \otimes \tilde{x} & A^{(0)}x-a^{(0)} \\
(I_k \otimes \tilde{x})^T & \lambda^0 I_{k(n+1)} &  0 \\
(A^{(0)}x-a^{(0)})^T & 0 & \lambda-\lambda^0\rho^2-\lambda^1
      \end{array}
 \right)\succeq 0\}.
\end{eqnarray*}
for some $\lambda \in \mathbb{R}$ and $\lambda^0,\lambda^1 \ge 0$, where $\tilde{{x}}=({x}^T,-1)^T \in \mathbb{R}^{n+1}$.
\end{corollary}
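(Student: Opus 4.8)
The plan is to obtain this Corollary as a direct specialization of Theorem \ref{th:robust_fractional}. The first step is to observe that the uncertainty set (\ref{eq:pp6}) is exactly the set (\ref{eq:pp0}) in the particular case $l=1$, $\overline{\Delta}=0$, $w^1=0$ and $\beta^1=1$: with these choices the polyhedral constraint $(w^1)^T{\rm vec}\Delta\le\beta^1$ reads $0\le 1$, which is vacuously true, so $\mathcal{U}$ collapses to the Frobenius-norm ball $\{\tilde A^{(0)}+\Delta:\|\Delta\|_F\le\rho\}$.

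Next I would verify that the two hypotheses of Theorem \ref{th:robust_fractional} hold. Since $w^1=0$ we have ${\rm span}\{w^1\}=\{0\}$, hence $s={\rm dim}\,{\rm span}\{w^1\}=0$, and the requirement $k\ge s+1$ reduces to $k\ge 1$, which is automatic because $k$ is the number of rows of the data matrix. For the strict-feasibility requirement, the set $\{\Delta:\|\Delta-\overline{\Delta}\|_F<\rho,\ (w^1)^T{\rm vec}\Delta<\beta^1\}$ becomes $\{\Delta:\|\Delta\|_F<\rho\}$, which is nonempty precisely because $\rho>0$ is assumed. Therefore Theorem \ref{th:robust_fractional} is applicable to $\mathcal{U}$ as in (\ref{eq:pp6}).

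Finally I would substitute the specialized data into the linear matrix inequality produced by Theorem \ref{th:robust_fractional}. With $\overline{\Delta}=0$ one gets $b={\rm vec}(\overline{\Delta})=0$ and $\gamma=\rho^2-{\rm Tr}(\overline{\Delta}^T\overline{\Delta})=\rho^2$; moreover $-\lambda^0 b+\frac{1}{2}\lambda^1 w^1=0$, and the bottom-right block $\lambda-\lambda^0(\gamma-\|b\|^2)-\lambda^1\beta^1$ simplifies to $\lambda-\lambda^0\rho^2-\lambda^1$. Inserting these into the $3\times 3$ block matrix of Theorem \ref{th:robust_fractional} yields exactly the semi-definite system in the statement, and the equivalence ``$x$ solves (RLSP) if and only if $(x,\lambda,\lambda^0,\lambda^1)$ solves that SDP'' is inherited verbatim from the theorem. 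There is essentially no genuine obstacle in the argument; the only point deserving a word of care is noticing that the degenerate constraint $w^1=0$, $\beta^1=1$ is not merely harmless but convenient, since it forces $s=0$ and hence makes the dimension condition of the general theorem hold automatically for any number of rows $k$.
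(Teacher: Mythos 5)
Your proposal is correct and follows exactly the paper's own route: specialize Theorem \ref{th:robust_fractional} with $l=1$, $\overline{\Delta}=0$, $w^1=0$, $\beta^1=1$, note $s=0$ so the dimension condition $k\ge s+1$ is automatic, and use $\rho>0$ for strict feasibility at $\Delta=0$. Your explicit substitution into the block matrix is a detail the paper leaves implicit, but the argument is the same.
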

\begin{proof}
Let $l=1$, $\overline{\Delta}=0$, $w^1=0$ and $\beta^1 =1$.  Then, $s={\rm dim}{\rm span}\{w^1\} =0$, and so, $k \ge 1 = s+1$. Moreover, as $\rho>0$, the strict feasibility condition is satisfied for $\Delta=0$. Thus, the conclusion follows by the preceding theorem.
\end{proof}
Consider the special case of the uncertainty set, $\mathcal{U}$, in (\ref{eq:pp0}), where $l=2$, $\overline{\Delta}=0$, $w^2=-w^1$ and $\beta^1=-\beta^2 =1$. In this case, $\mathcal{U}$ simplifies to case of an intersection of two ellipsoids of the form
\begin{eqnarray}\label{eq:pp9}
\mathcal{U} & = & \{\tilde{A}^{(0)} + \Delta: \Delta \in \mathbb{R}^{k \times (n+1)}, \|\Delta\|_F \le \rho, \  -1 \le (w^{1})^T{\rm vec}\Delta \le 1\} \nonumber \\
& = & \{\tilde{A}^{(0)} + \Delta: \Delta \in \mathbb{R}^{k \times (n+1)}, {\rm Tr}(\Delta^T\Delta) \le \rho^2, \  {\rm Tr}(\Delta^T B \Delta) \le 1\},
\end{eqnarray}
where $B=(w^1)(w^1)^T$. In this case, an SDP characterization of robust solution was established in Beck and Eldar \cite{beck-eldar}. In this case we obtain the following corollary.
\begin{corollary}{\bf (Intersection of two ellipsoids uncertainty)}
Let $x \in \mathbb{R}^n$. For problem (RSLP) with $\mathcal{U}$ defined as in (\ref{eq:pp9}), assume that  $k \ge 2$, where $k$ is the number of rows in the matrix data of $\mathcal{U}$, and that $\rho>0$.  Then $x$ solves (RLSP) if and only if  $(x,\lambda,\lambda^0,\lambda^1,\lambda^2) \in \mathbb{R}^n \times \mathbb{R} \times \mathbb{R}_+ \times  \mathbb{R}_+$  solves the following linear semi-definite programming problem:
\begin{eqnarray*}
& &  \min_{(x,\lambda) \in \mathbb{R}^n\times \mathbb{R}, \lambda^0,\ldots,\lambda^l \ge 0}\{\lambda: \\
& & \left(\begin{array}{ccc}
I_k & I_k \otimes \tilde{x} & A^{(0)}x-a^{(0)} \\
(I_k \otimes \tilde{x})^T & \lambda^0 I_{k(n+1)} &  \frac{1}{2}(\lambda^1 w^1-\lambda^2 w^1) \\
(A^{(0)}x-a^{(0)})^T & \frac{1}{2}(\lambda^1 w^1-\lambda^2 w^1)^T & \lambda-\lambda^0\rho^2-(\lambda^1 -\lambda^2)
      \end{array}
 \right)\succeq 0\},
\end{eqnarray*}
for some $\lambda \in \mathbb{R}$ and $\lambda^0,\lambda^l,\lambda^2 \ge 0$, where $\tilde{{x}}=({x}^T,-1)^T \in \mathbb{R}^{n+1}$.
\end{corollary}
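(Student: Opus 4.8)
The plan is to derive this corollary directly from Theorem~\ref{th:robust_fractional} by specialization. The first step is to identify the uncertainty set $\mathcal{U}$ in (\ref{eq:pp9}) as the instance of the general uncertainty set (\ref{eq:pp0}) obtained by setting $l=2$, $\overline{\Delta}=0$, $w^2=-w^1$ and $\beta^1=-\beta^2=1$: the pair of scalar inequalities $(w^1)^T{\rm vec}\,\Delta\le 1$ and $(w^2)^T{\rm vec}\,\Delta\le 1$ in (\ref{eq:pp0}) encodes the two-sided bound in (\ref{eq:pp9}), while the Frobenius-norm constraint and the derived parameters $b$ and $\gamma$ carry over unchanged.

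The second step is to check that the hypotheses of Theorem~\ref{th:robust_fractional} hold in this instance. For the dimension-type requirement $k\ge s+1$: since $w^2=-w^1$ we have ${\rm span}\{w^1,w^2\}={\rm span}\{w^1\}$, hence $s:={\rm dim}\,{\rm span}\{w^1,w^2\}\le 1$, and the standing assumption $k\ge 2$ then gives $k\ge s+1$. For the strict-feasibility requirement, the assumption $\rho>0$ makes $\Delta=0$ admissible: $\|\Delta-\overline{\Delta}\|_F=0<\rho$ and $(w^j)^T{\rm vec}\,\Delta=0$ lies strictly inside the polyhedral bounds, so $\{\Delta:\|\Delta-\overline{\Delta}\|_F<\rho,\ (w^j)^T{\rm vec}\,\Delta<\beta^j,\ j=1,2\}\ne\emptyset$.

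The third step is purely computational: substitute these choices into the semidefinite program of Theorem~\ref{th:robust_fractional} and simplify. With $\overline{\Delta}=0$ we get $b={\rm vec}(\overline{\Delta})=0$ and $\gamma=\rho^2-{\rm Tr}(\overline{\Delta}^T\overline{\Delta})=\rho^2$, so the $(2,3)$ block $-\lambda^0 b+\frac{1}{2}\sum_{j=1}^2\lambda^j w^j$ collapses to $\frac{1}{2}(\lambda^1 w^1-\lambda^2 w^1)$ via $w^2=-w^1$, and the corner scalar $\lambda-\lambda^0(\gamma-\|b\|^2)-\sum_{j=1}^2\lambda^j\beta^j$ collapses to $\lambda-\lambda^0\rho^2-(\lambda^1-\lambda^2)$. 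Plugging these simplified entries into the block matrix of Theorem~\ref{th:robust_fractional} reproduces exactly the linear matrix inequality in the statement, which establishes the claimed equivalence.

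I do not expect a genuine obstacle here: all the analytic content is already packaged in Theorem~\ref{th:robust_fractional} (and, upstream, in Lemma~\ref{lemma:5.1} and the extended S-lemma, Corollary~\ref{cor:S_lemma}). The only points requiring care are confirming that the substitution $w^2=-w^1$ keeps $s\le 1$ so that the dimension condition is automatic from $k\ge 2$, verifying that $\Delta=0$ is a Slater point under $\rho>0$, and correctly tracking the cancellations $w^2=-w^1$, $b=0$, $\gamma=\rho^2$ through the off-diagonal and corner entries of the matrix.
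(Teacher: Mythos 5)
Your proposal is correct and follows essentially the same route as the paper: specialize Theorem~\ref{th:robust_fractional} with $l=2$, $\overline{\Delta}=0$, $w^2=-w^1$, $\beta^1=-\beta^2=1$, observe that $s={\rm dim}\,{\rm span}\{w^1,w^2\}\le 1$ so $k\ge 2\ge s+1$, and use $\rho>0$ to exhibit $\Delta=0$ as a strictly feasible point. The only difference is that you additionally write out the substitutions $b=0$, $\gamma=\rho^2$ and the resulting simplification of the LMI blocks, which the paper leaves implicit.
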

\begin{proof}
Let $l=2$, $\overline{\Delta}=0$, $w^2=-w^1$ and $\beta^1=-\beta^2 =1$. Then, $s={\rm dim}{\rm span}\{w^1,w^2\} \le 1$, and so, $k \ge 2 \ge s+1$. Moreover, as $\rho>0$, the strict feasibility condition is satisfied for $\Delta=0$. Thus, the conclusion follows from Theorem 5.1.
\end{proof}
\begin{remark} {\rm {\bf (Tractability of (RLSP))}
It follows easily from Theorem 5.1 that finding a  solution of the
robust least squares with the uncertainty set
given by an intersection of a norm constraint and a polyhedral constraint is equivalent
to solving a linear semi-definite programming problem.
Note that a linear semi-definite programming problem can be solved in polynomial time and $s={\rm dim}{\rm span}\{w^1,\ldots,w^l\} \le l$ (and so, $k \ge l+1$ implies that $k \ge s+1$). So, a solution of this robust least squares can be validated in polynomial time whenever
$k\ge l+1$ where $k$ is the number of rows in the matrix data $A$ and $l$ is the number of the linear inequalities that defines the uncertainty set. }
\end{remark}

\subsection{Robust Second Order Cone Programming Problems}


Consider the linear second-order cone programs (SOCP) (cf. \cite{Second_order}) under constraint data uncertainty
\begin{eqnarray*}
(SOCP)& \min_{x \in \mathbb{R}^n} & a^Tx  \\
& \mbox{ s.t. } & \|B_ix-b_i\| \le d_i, \ i=1,\ldots,m,
\end{eqnarray*}
{where the data $\tilde{B}_i=(B_i,b_i) \in \mathbb{R}^{k_i \times n} \times \mathbb{R}^{k_i}= \mathbb{R}^{k_i \times (n+1)}$, $i=1,\ldots,m$, is uncertain and it belongs to the matrix  uncertainty set $\mathcal{U}_i$.
 The robust counterpart of the uncertain second-order cone problem can be stated as follows:
\begin{eqnarray*}
(RSOCP) & \min_{x \in \mathbb{R}^n} & a^Tx  \\
& \mbox{ s.t. } & \|B_ix-b_i\| \le d_i, \ \forall (B_i,b_i) \in \mathcal{U}_i,\   i=1,\ldots,m.
\end{eqnarray*}
Note that, although the (RSOCP) is, in general, not tractable \cite{BNR} when $\mathcal{U}_i$ is given by an intersection of finitely ellipsoids, recently, Beck \cite{Beck0} has identified an
interesting tractable subclass where
 $\mathcal{U}_i$ is described by  at most $k$ many homogeneous quadratic inequalities
 under a suitable regularity condition. 

Here, we examine (RSOCP) in the case where the uncertainty set is given by an  intersection of a matrix norm constraint and polyhedral constraints, i.e.,
\begin{equation}\label{eq:pp}
\mathcal{U}_i=\{\tilde{B}_i^{(0)} + \Delta_i: \Delta_i \in \mathbb{R}^{k_i \times (n+1)}, \|\Delta_i-\overline{\Delta}_i\|_F \le \rho_i, \  (w_i^{j})^T{\rm vec}\Delta_i \le \beta_i^j, j=1,\ldots,l_i \},
\end{equation}
with $\tilde{B}_i^{(0)}:=(B_i^{(0)},b_i^{(0)}) \in \mathbb{R}^{k_i \times n} \times \mathbb{R}^k= \mathbb{R}^{k_i \times (n+1)}$
 and $\|M\|_F$ is the Frobenius norm defined by $\|M\|_F=\sqrt{{\rm Tr}(M^TM)}$. We denote $s_i={\rm dim}\, {\rm span}\{w_i^1,\ldots,w_i^l\}$, $i=1,\ldots,m$.

We note that our model differs from the model considered in \cite{Beck0} because a polyhedral set in $\mathbb{R}^{(n+1) \times (n+1)}$ cannot, in general, be described as the finite intersection of sets of the form $\{\Delta \in \mathbb{R}^{(n+1) \times (n+1)}:\|C_j \Delta^T\|_F^2 \le \rho_j^2\}$ in general.


\begin{theorem} \label{th:SOCP} {\bf (SDP characterization of (RSOCP) solution)} For problem (RSOCP) with $\mathcal{U}_i$ defined as in (\ref{eq:pp}). Assume that, for each $i=1,\ldots,m$, $k_i \ge s_i+1$,  and that $\{\Delta_i: \|\Delta_i-\overline{\Delta}_i\|_F < \rho_i, \  (w_i^{j})^T{\rm vec}\Delta_i < \beta_i^j, j=1,\ldots,l_i\} \neq \emptyset.$
 A point $x\in \mathbb{R}^n$ solves (RSOCP) if and only if $(x,\lambda_1,\ldots,\lambda_m) \in \mathbb{R}^n  \times \mathbb{R}^{l_1+1}_+ \times \ldots \times \mathbb{R}^{l_m+1}_+$  solves the following linear semi-definite programming problem:
\begin{eqnarray*}
& & \min_{\lambda_i^0,\ldots,\lambda_i^{l_i} \ge 0, x \in \mathbb{R}^n}\{a^Tx: \\
& & \left(\begin{array}{ccc}
I_{k_i} & I_{k_i} \otimes \tilde{x} & B_i^{(0)}x-b_i^{(0)} \\
(I_{k_i} \otimes \tilde{x})^T & \lambda_i^0 I_{k_i(n+1)} & -\lambda_i^0 \overline{b}_i+ \frac{1}{2}\sum_{j=1}^{l_i} \lambda_i^j w_i^j \\
(B_i^{(0)}x-b_i^{(0)})^T & (-\lambda_i^0 \overline{b}_i+ \frac{1}{2}\sum_{j=1}^{l_i} \lambda_i^j w_i^j)^T & d_i^2-\lambda_i^0(\overline{\gamma}_i-\|\overline{b}_i\|^2)-\sum_{j=1}^{l_i}\lambda_i^j \beta_i^j
      \end{array}
 \right)\succeq 0\},
\end{eqnarray*}
for some $\lambda_i=(\lambda_i^0,\lambda_i^1,\ldots,\lambda_i^{l_i}) \in \mathbb{R}^{l_i+1}_+$, $i=1,\ldots,m$, where $\tilde{{x}}=({x}^T,-1)^T \in \mathbb{R}^{n+1}$ and
$\overline{b}_i={\rm vec}(\overline{\Delta}_i)$ and $\overline{\gamma}_i=\rho_i^2$.
\end{theorem}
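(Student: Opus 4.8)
The plan is to obtain Theorem \ref{th:SOCP} as $m$ essentially independent applications of Lemma \ref{lemma:5.1}, one for each robust second-order cone constraint, followed by the trivial observation that the objective $a^Tx$ is certain and already linear.

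First I would recast robust feasibility for (RSOCP). A point $x \in \mathbb{R}^n$ is feasible for (RSOCP) exactly when, for every $i=1,\ldots,m$, $\|B_ix-b_i\| \le d_i$ holds for all $(B_i,b_i) \in \mathcal{U}_i$. Each $\mathcal{U}_i$ is compact (it is the intersection of a Frobenius-norm ball with finitely many closed half-spaces), so the worst case is attained; and since $d_i \ge 0$, which may be assumed without loss of generality, the requirement ``$\|B_ix-b_i\| \le d_i$ for all $(B_i,b_i)\in\mathcal{U}_i$'' is equivalent to $\max_{(B_i,b_i)\in\mathcal{U}_i}\|B_ix-b_i\|^2 \le d_i^2$. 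In other words, $x$ is feasible for (RSOCP) if and only if, for each $i$, the pair $(x,d_i^2)$ is robust feasible for the quadratic constraint $\|B_ix-b_i\|^2 \le d_i^2$ with respect to $\mathcal{U}_i$, in the precise sense of Lemma \ref{lemma:5.1}.

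Next I would apply Lemma \ref{lemma:5.1} separately for each $i$, with the matrix data $(A^{(0)},a^{(0)})$ of the lemma replaced by $(B_i^{(0)},b_i^{(0)})$, the integers $k,l$ replaced by $k_i,l_i$, the parameters $\rho,\overline{\Delta},w^j,\beta^j$ replaced by $\rho_i,\overline{\Delta}_i,w_i^j,\beta_i^j$, the right-hand side $\lambda$ replaced by $d_i^2$, and hence $b$ replaced by $\overline{b}_i={\rm vec}(\overline{\Delta}_i)$ and $\gamma$ by $\overline{\gamma}_i$. The two hypotheses that Lemma \ref{lemma:5.1} requires---the dimension condition $k_i \ge s_i+1$ with $s_i={\rm dim}\,{\rm span}\{w_i^1,\ldots,w_i^{l_i}\}$, and the nonempty-interior condition $\{\Delta_i:\|\Delta_i-\overline{\Delta}_i\|_F<\rho_i,\ (w_i^j)^T{\rm vec}\Delta_i<\beta_i^j,\ j=1,\ldots,l_i\}\neq\emptyset$---are exactly the standing assumptions of Theorem \ref{th:SOCP}. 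Hence Lemma \ref{lemma:5.1} yields: the $i$-th robust second-order cone constraint holds at $x$ if and only if there exist $\lambda_i^0,\ldots,\lambda_i^{l_i}\ge 0$ for which the $i$-th linear matrix inequality displayed in the theorem is satisfied.

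Finally I would assemble the pieces. Since $x$ solves (RSOCP) precisely when it minimizes the certain linear objective $a^Tx$ over the robust-feasible set, and the multiplier vectors $\lambda_i=(\lambda_i^0,\ldots,\lambda_i^{l_i})$ attached to different constraints are unrelated and hence may be adjoined to $x$ as joint decision variables, substituting the linear-matrix-inequality characterizations from the previous step shows that $x$ solves (RSOCP) if and only if $(x,\lambda_1,\ldots,\lambda_m)$ solves the displayed linear semidefinite program. There is no genuine obstacle here: the whole weight of the argument already rests in Lemma \ref{lemma:5.1}, which itself relies on the extended S-lemma (Corollary \ref{cor:S_lemma}) and the hidden-convexity result (Theorem \ref{prop:2.2}). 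The only two points that merit an explicit line are the attainment of the worst case over the compact set $\mathcal{U}_i$ and the harmless squaring step $\|B_ix-b_i\| \le d_i \Leftrightarrow \|B_ix-b_i\|^2 \le d_i^2$ (valid since $d_i \ge 0$), which together justify replacing each robust SOCP constraint by a robust quadratic inequality amenable to Lemma \ref{lemma:5.1}.
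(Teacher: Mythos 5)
Your proposal is correct and follows essentially the same route as the paper: the paper's own proof also reduces robust feasibility of each constraint to $\max_{(B_i,b_i)\in\mathcal{U}_i}\|B_ix-b_i\|^2\le d_i^2$ and then invokes Lemma \ref{lemma:5.1} once per constraint, adjoining the independent multiplier vectors to $x$ as decision variables. Your explicit remarks on attainment over the compact set $\mathcal{U}_i$ and the squaring step are details the paper leaves implicit, but they do not change the argument.
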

\begin{proof}
Note that a point $x$ is robust feasible if, for all $i=1,\ldots,m$,
\[
\max_{(B_i,b_i)\in \mathcal{U}_i}\|B_ix-b_i\|^2 \le d_i^2.
\]
So, Lemma \ref{lemma:5.1} implies that
the robust feasibility of $x$ can be  equivalently rewritten as the following linear matrix inequality problem: for all $i=1,\ldots,m$,  there exist $\lambda_i^0,\ldots,\lambda_i^{l_i} \ge 0$ such that
\[
\left(\begin{array}{ccc}
I_{k_i} & I_{k_i} \otimes \tilde{x} & B_i^{(0)}x-b_i^{(0)} \\
(I_{k_i} \otimes \tilde{x})^T & \lambda_i^0 I_{k_i(n+1)} & -\lambda_i^0 \overline{b}_i+ \frac{1}{2}\sum_{j=1}^{l_i} \lambda_i^j w_i^j \\
(B_i^{(0)}x-b_i^{(0)})^T & (-\lambda_i^0 \overline{b}_i+ \frac{1}{2}\sum_{j=1}^{l_i} \lambda_i^j w_i^j)^T & d_i^2-\lambda_i^0(\overline{\gamma}_i-\|\overline{b}_i\|^2)-\sum_{j=1}^{l_i}\lambda_i^j \beta_i^j
      \end{array}
 \right)\succeq 0.
\]
Thus, the conclusion follows.
 \end{proof}
Consider the special case of the uncertainty set (\ref{eq:pp}), where $l_i=1$, $\overline{\Delta}=0$, $w_i^1=0$ and $\beta_i^1 =1$, $i=1,\ldots,m$. In this case $\mathcal{U}_i$ reduces to the matrix norm uncertainty set of the form
\begin{eqnarray}\label{eq:pp7}
\mathcal{U}_i & = & \{\tilde{A}_i^{(0)} + \Delta_i: \Delta_i \in \mathbb{R}^{k_i \times (n+1)}, \|\Delta_i\|_F \le \rho\}.
\end{eqnarray}
An SDP characterization of robust solution of second-order cone programming problem was established in  \cite{BNR}.
\begin{corollary}{\bf (Matrix norm uncertainty)}
Let $x \in \mathbb{R}^n$. For problem (RSOCP) with $\mathcal{U}$ defined as in (\ref{eq:pp7}), assume that $\rho>0$.  Then $x$ solves (RSOCP) if and only if  $(x,\lambda,\lambda_i^0,\lambda_i^1) \in \mathbb{R}^n \times \mathbb{R} \times \mathbb{R}_+ \times  \mathbb{R}_+$  solves the following linear semi-definite programming problem:
\begin{eqnarray*}
& &  \min_{(x,\lambda) \in \mathbb{R}^n\times \mathbb{R}, \lambda_i^0,\lambda_i^1 \ge 0}\{\lambda: \\
& & \left(\begin{array}{ccc}
I_k & I_k \otimes \tilde{x} & A^{(0)}x-a^{(0)} \\
(I_k \otimes \tilde{x})^T & \lambda^0 I_{k_i(n+1)} &  0 \\
(A^{(0)}x-a^{(0)})^T & 0 & \lambda-\lambda_i^0\rho^2-\lambda_i^1
      \end{array}
 \right)\succeq 0\},
\end{eqnarray*}
for some $\lambda \in \mathbb{R}$ and $\lambda_i^0,\lambda_i^l \ge 0$, where $\tilde{{x}}=({x}^T,-1)^T \in \mathbb{R}^{n+1}$.
\end{corollary}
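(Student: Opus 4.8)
The plan is to read this corollary off from Theorem~\ref{th:SOCP} by a direct specialization of the uncertainty sets. First I would substitute into (\ref{eq:pp}) the data that produces the matrix-norm uncertainty set (\ref{eq:pp7}), namely $l_i=1$, $\overline{\Delta}_i=0$, $w_i^1=0$ and $\beta_i^1=1$ for each $i=1,\ldots,m$, and check that the hypotheses of Theorem~\ref{th:SOCP} hold. Since $w_i^1=0$ we have $s_i={\rm dim}\,{\rm span}\{w_i^1\}=0$, so the requirement $k_i\ge s_i+1$ reduces to $k_i\ge 1$, which is automatic because the data matrix $\tilde{B}_i^{(0)}$ has at least one row. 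Moreover, since $\rho>0$, the point $\Delta_i=0$ satisfies $\|\Delta_i-\overline{\Delta}_i\|_F=0<\rho$ and $(w_i^1)^T{\rm vec}\,\Delta_i=0<1=\beta_i^1$, so the set $\{\Delta_i:\|\Delta_i-\overline{\Delta}_i\|_F<\rho_i,\ (w_i^{j})^T{\rm vec}\,\Delta_i<\beta_i^j,\ j=1,\ldots,l_i\}$ is nonempty. Hence Theorem~\ref{th:SOCP} applies.

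Next I would simplify the semidefinite constraint appearing in the conclusion of Theorem~\ref{th:SOCP} for this choice of data. Here $\overline{b}_i={\rm vec}(\overline{\Delta}_i)=0$ and $\overline{\gamma}_i=\rho^2$, so the $(2,3)$ block $-\lambda_i^0\overline{b}_i+\frac{1}{2}\sum_{j=1}^{l_i}\lambda_i^j w_i^j$ equals $0$ (both summands vanish), while the $(3,3)$ scalar entry $d_i^2-\lambda_i^0(\overline{\gamma}_i-\|\overline{b}_i\|^2)-\sum_{j=1}^{l_i}\lambda_i^j\beta_i^j$ collapses to $d_i^2-\lambda_i^0\rho^2-\lambda_i^1$. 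Substituting these back into the linear matrix inequality of Theorem~\ref{th:SOCP} gives exactly the displayed semidefinite program, and the equivalence in the statement is inherited unchanged from that theorem.

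The argument has no genuine obstacle; it is a routine specialization that parallels the matrix-norm corollary already established for (RLSP). The only two points that deserve an explicit mention are: first, that the degenerate direction $w_i^1=0$ is admissible (its span is the trivial subspace, of dimension $0$, so the inequality $k_i\ge s_i+1$ is vacuous); and second, that nonemptiness of the strict uncertainty set collapses to the single hypothesis $\rho>0$. Everything else is bookkeeping inside Theorem~\ref{th:SOCP}.
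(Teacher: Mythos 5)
Your proposal is correct and follows essentially the same route as the paper: specialize Theorem~\ref{th:SOCP} with $l_i=1$, $\overline{\Delta}_i=0$, $w_i^1=0$, $\beta_i^1=1$, note that $s_i=0$ makes the dimension hypothesis $k_i\ge s_i+1$ automatic, and use $\Delta_i=0$ together with $\rho>0$ for strict feasibility. Your explicit simplification of the $(2,3)$ and $(3,3)$ blocks of the LMI is a welcome extra detail that the paper leaves implicit.
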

\begin{proof}
Let $l_i=1$, $\overline{\Delta}=0$, $w_i^1=0$ and $\beta_i^1 =1$, $i=1,\ldots,m$. Then, $s_i={\rm dim}{\rm span}\{w_i^1\} =0$, and so, $k_i \ge 1 \ge s_i+1$. Moreover, as $\rho>0$, the strict feasibility condition is satisfied for $\Delta=0$. Thus, the conclusion follows from Theorem 5.2.
\end{proof}

Consider another special case of the uncertainty set (\ref{eq:pp}), where $k_i=k-1$, $\overline{\Delta}_i=0$, $\beta_i^l=-\beta_i^{l+k-1} =1$ and $w_i^l=-w_i^{l+k-1}=w^l$, $l=1,\ldots,2k_i$, $i=1,\ldots,m$. In this case, the uncertainty set $\mathcal{U}_i$ simplifies to the intersection of $k$ many ellipsoids of the form
{\small\begin{eqnarray}\label{eq:pp99}
 \mathcal{U}_i  =&  \{\tilde{A}_i^{(0)} + \Delta_i: \Delta_i \in \mathbb{R}^{k \times (n+1)}, \|\Delta_i\|_F \le \rho_i,  -1 \le (w^{l})^T{\rm vec}\Delta_i \le 1, l=1,\ldots,k-1\} \nonumber \\
 = &\{\tilde{A}_i^{(0)} + \Delta_i: \Delta_i \in \mathbb{R}^{k \times (n+1)}, {\rm Tr}(\Delta_i^T\Delta_i) \le \rho_i^2,  {\rm Tr}(\Delta_i^T C^l \Delta_i) \le 1, l=1,\ldots,k-1\},
\end{eqnarray}}where $C^l=(w^l)(w^l)^T$, $l=1,\ldots,k-1$. The following robust solution characterization in terms of SDP has been given in Beck \cite{Beck0}.

\begin{corollary} {\rm \cite[Section 4.3]{Beck0}}{\bf (Intersection of many ellipsoids uncertainty)}
Let $x \in \mathbb{R}^n$. For problem (RSOCP) with $\mathcal{U}_i$ defined as in (\ref{eq:pp99}), assume that  $\rho_i>0$.  A point $x\in \mathbb{R}^n$ solves (RSOCP) if and only if $(x,\lambda_1,\ldots,\lambda_m) \in \mathbb{R}^n  \times \mathbb{R}^{2k-1}_+ \times \ldots \times \mathbb{R}^{2k-1}_+$  solves the following linear semi-definite programming problem:
{\small \begin{eqnarray*}
& & \min_{\lambda_i^0,\ldots,\lambda_i^{2(k-1)} \ge 0, x \in \mathbb{R}^n}\{a^Tx: \\
& &  \left(\begin{array}{ccc}
I_{k_i} & I_{k_i} \otimes \tilde{x} & B_i^{(0)}x-b_i^{(0)} \\
(I_{k_i} \otimes \tilde{x})^T & \lambda_i^0 I_{k_i(n+1)} &\displaystyle \frac{1}{2}(\sum_{j=1}^{k-1} \lambda_i^j w^j-\sum_{j=1}^{k-1} \lambda_i^{j+k-1} w^j) \\
(B_i^{(0)}x-b_i^{(0)})^T & \displaystyle \frac{1}{2}(\sum_{j=1}^{k-1} \lambda_i^j w^j-\sum_{j=1}^{k-1} \lambda_i^{j+k-1} w^j)^T & \displaystyle d_i^2-\lambda_i^0\rho_i^2-\sum_{j=1}^{k-1} \lambda_i^j+\sum_{j=1}^{k-1} \lambda_i^{j+k-1}
      \end{array}
 \right)\succeq 0\},
\end{eqnarray*}}
for some $\lambda_i=(\lambda_i^0,\lambda_i^1,\ldots,\lambda_i^{2(k-1)}) \in \mathbb{R}^{2k-1}_+$, $i=1,\ldots,m$, where $\tilde{{x}}=({x}^T,-1)^T \in \mathbb{R}^{n+1}$.
\end{corollary}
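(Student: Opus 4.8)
The plan is to obtain this corollary as a direct specialization of Theorem \ref{th:SOCP}. The first step is to rewrite the intersection-of-ellipsoids uncertainty set $\mathcal{U}_i$ in (\ref{eq:pp99}) in the general form (\ref{eq:pp}). Each two-sided constraint $-1 \le (w^{l})^T{\rm vec}\,\Delta_i \le 1$, $l=1,\ldots,k-1$, is the conjunction of the two affine inequalities $(w^{l})^T{\rm vec}\,\Delta_i \le 1$ and $(-w^{l})^T{\rm vec}\,\Delta_i \le 1$. Hence $\mathcal{U}_i$ is of the form (\ref{eq:pp}) with $l_i=2(k-1)$, with $\overline{\Delta}_i=0$ (so that $\overline{b}_i={\rm vec}(\overline{\Delta}_i)=0$ and $\overline{\gamma}_i=\rho_i^2$), with constraint vectors $w_i^{j}=w^{j}$ for $j=1,\ldots,k-1$ and $w_i^{j+k-1}=-w^{j}$ for $j=1,\ldots,k-1$, and with the right-hand sides $\beta_i^j$ as specified.

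Next I would verify the two hypotheses of Theorem \ref{th:SOCP}. For the dimension condition, observe that ${\rm span}\{w_i^1,\ldots,w_i^{l_i}\}={\rm span}\{w^1,\ldots,w^{k-1}\}$, so $s_i={\rm dim}\,{\rm span}\{w_i^1,\ldots,w_i^{l_i}\}\le k-1<k=k_i$, which gives $k_i\ge s_i+1$. This is the one genuinely structural point: the set (\ref{eq:pp99}) is a tractable instance precisely because it involves strictly fewer independent polyhedral directions than there are rows in the data matrix $\Delta_i$. For the strict feasibility requirement, since $\rho_i>0$ the choice $\Delta_i=0$ satisfies $\|\Delta_i-\overline{\Delta}_i\|_F=0<\rho_i$ and $(w_i^{j})^T{\rm vec}\,\Delta_i=0<\beta_i^j$ for every $j$, so the set $\{\Delta_i:\|\Delta_i-\overline{\Delta}_i\|_F<\rho_i,\ (w_i^{j})^T{\rm vec}\,\Delta_i<\beta_i^j,\ j=1,\ldots,l_i\}$ is nonempty.

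With both hypotheses checked, Theorem \ref{th:SOCP} applies and identifies the solutions of (RSOCP) with the solutions of its associated semi-definite programming problem; it then only remains to simplify the linear matrix inequality of Theorem \ref{th:SOCP} under the present data. Since $\overline{b}_i=0$, the $(2,3)$-block $-\lambda_i^0\overline{b}_i+\tfrac12\sum_{j=1}^{l_i}\lambda_i^j w_i^j$ collapses, using $w_i^{j+k-1}=-w^{j}$, to $\tfrac12\big(\sum_{j=1}^{k-1}\lambda_i^j w^j-\sum_{j=1}^{k-1}\lambda_i^{j+k-1}w^j\big)$, and the $(3,3)$-entry $d_i^2-\lambda_i^0(\overline{\gamma}_i-\|\overline{b}_i\|^2)-\sum_{j=1}^{l_i}\lambda_i^j\beta_i^j$ collapses to $d_i^2-\lambda_i^0\rho_i^2-\sum_{j=1}^{k-1}\lambda_i^j\beta_i^j-\sum_{j=1}^{k-1}\lambda_i^{j+k-1}\beta_i^{j+k-1}$, which for the specified right-hand sides is the displayed expression. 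Substituting these blocks back, leaving the objective $a^Tx$ unchanged and the multiplier vector as $\lambda_i=(\lambda_i^0,\lambda_i^1,\ldots,\lambda_i^{2(k-1)})\in\mathbb{R}^{2k-1}_+$, yields exactly the semi-definite program in the statement.

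The step I expect to require the most care is not a genuine obstacle but the bookkeeping in this last reduction: one must carry the pairing $\{w^{l},-w^{l}\}$ and the signs of the associated right-hand sides consistently through the block matrix of Theorem \ref{th:SOCP}, and one must make sure — as checked above — that passing from the $k-1$ two-sided constraints to the $2(k-1)$ one-sided ones does not destroy the dimension condition $k_i\ge s_i+1$, which it does not, because ${\rm span}\{w^1,\ldots,w^{k-1}\}$ has dimension at most $k-1$ while $\Delta_i$ has $k$ rows.
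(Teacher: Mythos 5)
Your proposal is correct and follows essentially the same route as the paper: both specialize Theorem \ref{th:SOCP} by splitting each two-sided constraint into a pair of one-sided ones (so $l_i=2(k-1)$, $w_i^{j+k-1}=-w^j$, $\beta_i^{j+k-1}=-1$), verify $k_i\ge s_i+1$ via ${\rm dim}\,{\rm span}\{w^1,\ldots,w^{k-1}\}\le k-1$, and check strict feasibility at $\Delta_i=0$ using $\rho_i>0$. Your version is in fact slightly more careful than the paper's (which leaves the block simplification implicit and contains a small typo, writing $k_i=k-1$ in one place while using $k_i=k$ in the dimension count, where your reading $k_i=k$ is the consistent one).
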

\begin{proof}
Let $k_i=k-1$, $\overline{\Delta}_i=0$, $w_i^l=-w_i^{l+k-1}=w^l$ and $\beta_i^l=-\beta_i^{l+k-1} =1$, $l=1,\ldots,2k_i$,  $i=1,\ldots,m$. Then, $s_i={\rm dim} \, {\rm span}\{w^1,\ldots,w^{2(k-1)}\} \le k-1$, and so, $k_i=k \ge s_i+1$. Moreover, as $\rho_i>0$, the strict feasibility condition is satisfied for $\Delta=0$. Thus, the conclusion follows by the preceding theorem.
\end{proof}

\setcounter{equation}{0}
\section{Extensions and Further Research}
In this section, we present how our approach extends to more general trust-region problems that incorporate uniform convex quadratic inequalities. To examine this, consider the system of quadratic functions, $f(x)=x^TAx+a^Tx+\gamma$, $g_0(x)=\|x-x_0\|^2-\alpha$ and $g_i(x)=\|Bx\|^2+b_i^Tx-\beta_i$, $i=1,\ldots,m$,
 where $A \in S^{n \times n}$, $B \in \mathbb{R}^{l \times n}$ with $l \in \mathbb{N}$,
 $a,x_0,b_i \in \mathbb{R}^n$ and $\gamma,\alpha,\beta_i \in \mathbb{R}$. In this case, we can consider the following {\it extended dimension condition}
\begin{equation}\label{eq:EDC}
 {\rm dim}\big({\rm Ker}(A -\lambda_{\rm min}(A)I_n) \cap {\rm Ker}(B)\big) \ge s +1,
\end{equation}
where $s$ is the dimension of ${\rm span}\{b_1,\ldots,b_m\}$.

Clearly, if the matrix $B$ is zero, then the above quadratic systems and the extended dimension condition reduce to the quadratic systems and its associated dimension condition, studied in Sections 2-4.
 On the other hand, in the case when $B$ has rank $n$,  our dimension condition (\ref{eq:EDC}) fails.

As we see in the following Proposition, the hidden convexity property of Section 2 follows for the above general quadratic system under the extended dimension condition.
\begin{proposition}{\bf (Hidden convexity of General Quadratic Systems)}\label{prop:2.20}
Let $f(x)=x^TAx+a^Tx+\gamma$, $g_0(x)=\|x-x_0\|^2-\alpha$ and $g_i(x)=\|Bx\|^2+b_i^Tx-\beta_i$, $i=1,\ldots,m$, $A\in S^{n \times n}$, $B \in \mathbb{R}^{l \times n}$ with $l \in \mathbb{N}$, $a,x_0,b_i \in \mathbb{R}^n$ and $\gamma,\alpha,\beta_i \in \mathbb{R}$.
Suppose that the extended dimension condition (\ref{eq:EDC}) is satisfied.
Then, $$U(f,g_0,g_1,\ldots,g_m):=\{(f(x),g_0(x),g_1(x),\ldots,g_m(x)):x \in \mathbb{R}^n\}+\mathbb{R}_+^{m+2}$$ is a convex set.
\end{proposition}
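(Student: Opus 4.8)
The plan is to mimic the proof of Theorem \ref{prop:2.2}, replacing the role of the norm constraint $\|x-x_0\|^2-\alpha$ by a suitably chosen direction along which every $g_i$, $i=1,\dots,m$, behaves affinely and $f$ behaves well. First I would dispose of the trivial case: if $A-\lambda_{\min}(A)I_n$ together with the convexity already present in $g_0,g_1,\dots,g_m$ makes everything convex (in particular, if $A\succeq0$), then $U(f,g_0,g_1,\dots,g_m)$ is automatically convex, so assume $\lambda_{\min}(A)<0$. As before, one checks that $U(f,g_0,g_1,\dots,g_m)=\mathrm{epi}\,h$ for the optimal-value function $h$ over the (convex) right-hand-side domain $D$, so the task reduces to showing $h$ is convex.

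The key algebraic observation is that the functions $g_i(x)=\|Bx\|^2+b_i^Tx-\beta_i$ all share the \emph{same} quadratic part $\|Bx\|^2$. Hence, for $\lambda_{\min}(A)<0$, write $F(x):=f(x)-\lambda_{\min}(A)g_0(x)$; then
\[
F(x)=x^T(A-\lambda_{\min}(A)I_n)x+(a+2\lambda_{\min}(A)x_0)^Tx+(\gamma-\lambda_{\min}(A)\|x_0\|^2)
\]
is convex, and exactly as in the proof of Theorem \ref{prop:2.2} one has, for each $(r,s_1,\dots,s_m)\in D$,
\[
h(r,s_1,\dots,s_m)=\min_x\{F(x):\|x-x_0\|^2\le\alpha+r,\ g_i(x)\le\beta_i+s_i-(\beta_i)\ \text{appropriately}\}+\lambda_{\min}(A)(\alpha+r),
\]
provided the minimum of $\min_x\{F(x): g_0(x)\le r,\ g_i(x)\le s_i\}$ is attained on the sphere $\|x-x_0\|^2=\alpha+r$. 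Granting that attainment claim, the inner minimization is a minimization of a convex function over a jointly convex (in $x$ and the parameters) constraint system — here we crucially use that the $g_i$ are themselves convex — so the value is convex in $(r,s_1,\dots,s_m)$, hence $h$ is convex and $U=\mathrm{epi}\,h$ is convex.

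The main obstacle, and the only place the extended dimension condition (\ref{eq:EDC}) enters, is the \emph{attainment on the sphere} claim. I would argue by contradiction: suppose some minimizer $x^*$ of $\min_x\{F(x): g_0(x)\le r,\ g_i(x)\le s_i\}$ satisfies $\|x^*-x_0\|^2<\alpha+r$. By the dimension theorem, (\ref{eq:EDC}) guarantees a nonzero vector
\[
v\in\Bigl(\bigcap_{i=1}^m b_i^\perp\Bigr)\cap\mathrm{Ker}(A-\lambda_{\min}(A)I_n)\cap\mathrm{Ker}(B),
\]
since $\dim\mathrm{Ker}(A-\lambda_{\min}(A)I_n)\cap\mathrm{Ker}(B)\ge s+1$ while $\dim\bigcap_{i=1}^m b_i^\perp=n-s$, and $(s+1)+(n-s)=n+1>n$. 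Because $v\in\mathrm{Ker}(B)$ and $v\perp b_i$, the curve $x(t)=x^*+tv$ keeps every $g_i$ constant: $g_i(x(t))=\|B(x^*+tv)\|^2+b_i^T(x^*+tv)-\beta_i=g_i(x^*)\le s_i$. And because $v\in\mathrm{Ker}(A-\lambda_{\min}(A)I_n)$, the quadratic part of $F$ along $v$ vanishes, so $F(x(t))$ is affine in $t$; moving along $v$ (or $-v$) one can reach the sphere $\|x-x_0\|^2=\alpha+r$ without increasing $F$ (the two cases $(a+2\lambda_{\min}(A)x_0)^Tv=0$ and $\ne0$ are handled exactly as in Theorem \ref{prop:2.2}, giving either $F(x(t_0))=F(x^*)$ or $F(x(t_0))<F(x^*)$), contradicting that every minimizer lies strictly inside the ball. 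This establishes the attainment claim, completes the convexity of $h$, and hence proves that $U(f,g_0,g_1,\dots,g_m)$ is convex.
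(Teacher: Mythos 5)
Your proposal is correct and follows essentially the same route as the paper's proof: reduce to convexity of the optimal value function via the epigraph identity, pass to the convex function $F(x)=f(x)-\lambda_{\min}(A)\|x-x_0\|^2$, and establish attainment on the sphere by producing a nonzero $v\in\bigl(\bigcap_{i=1}^m b_i^{\bot}\bigr)\cap{\rm Ker}(A-\lambda_{\rm min}(A)I_n)\cap{\rm Ker}(B)$ from the extended dimension condition via the dimension theorem, along which every $g_i$ is constant and $F$ is affine. The only cosmetic blemish is the garbled constraint display in your expression for $h$, which should simply read $g_i(x)\le s_i$; the mathematical content matches the paper's argument.
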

\begin{proof}
As in the proof of Theorem \ref{prop:2.1}, we can assume without loss of generality that $A$ is not positive semidfinite.
Define $h$ by  $h(x)=\min_{x \in \mathbb{R}^n}\{f(x): \|x-x_0\|^2 \le \alpha+r, \, \|Bx\|^2+b_i^Tx \le \beta+s_i,\, i=1,\ldots,m\}$
 if $x \in D:=\{(r,s_1,\ldots,s_m): \|x-x_0\|^2 \le \alpha+r, \|Bx\|^2+b_i^Tx \le \beta_i+s_i \mbox{ for some } x \in \mathbb{R}^n\}$ and
$h(x)=+\infty$ if $x \notin D$. Using the same line of arguments as in Theorem  \ref{prop:2.1}, we can easily verify that $U(f,g_0,g_1,\ldots,g_m)={\rm epi}h$. Moreover, $h$ is convex if
the minimization problem  $$\min_{x \in \mathbb{R}^n}\{f(x)-\lambda_{\min}(A)\|x-x_0\|^2: \|x-x_0\|^2 \le \alpha+r, \|Bx\|^2+b_i^Tx \le \beta_i+s_i\}$$ attains its minimum at some $\overline{x} \in \mathbb{R}^n$
 with $\|\overline{x}-x_0\|^2=\alpha+r$ and $\|Bx\|^2+b_i^T\overline{x} \le \beta_i+s_i$.

Indeed, this optimization problem has a minimizer on the sphere. This follows from the fact that there exists $v \in \mathbb{R}^n \backslash\{0\}$ such that
\begin{equation}\label{eq:019}
v \in \big( \bigcap_{i=1}^mb_i^{\bot} \big) \cap {\rm Ker}(A -\lambda_{\rm min}(A)I_n) \cap {\rm Ker}(B).
\end{equation}

Otherwise, $\big( \bigcap_{i=1}^mb_i^{\bot} \big) \cap {\rm Ker}(A -\lambda_{\rm min}(A)I_n)\cap {\rm Ker}(B)=\{0\}$. Then it follows from our extended dimension condition,
  ${\rm dim}\big({\rm Ker}(A -\lambda_{\rm min}(A)I_n) \cap {\rm Ker}(B)\big) \ge s+1$, where $s$ is the dimension of ${\rm span}\{b_1,\ldots,b_m\}$, that
\begin{eqnarray*}
n+1  =  (s+1) +(n-s)
& \le & {\rm dim}({\rm Ker}(A -\lambda_{\rm min}(A)I_n\cap {\rm Ker}(B))+{\rm dim}(\bigcap_{i=1}^mb_i^{\bot}) \\
& = & {\rm dim}\big({\rm Ker}(A -\lambda_{\rm min}(A)I_n)\cap {\rm Ker}(B)+\bigcap_{i=1}^mb_i^{\bot} \big) \\
& & +{\rm dim} \big( \bigcap_{i=1}^mb_i^{\bot}  \cap {\rm Ker}(A -\lambda_{\rm min}(A)I_n\cap {\rm Ker}(B)) \big)\\
& \le &  n,
\end{eqnarray*}
which is impossible.

So, the same line of arguments as in Theorem \ref{prop:2.1} gives the desired conclusion.
\end{proof}

Recently, in \cite{beck-eldar}, the authors considered trust region problem with one additional linear inequality constraint:
$$(P_2) \ \ \ \min\{x^TAx+a^Tx:\|x-x_0\|^2 \le \alpha,\, b_1^Tx \le \beta_1\}$$
and showed that strong duality holds for $(P_1)$ whenever ${\rm dim}\big({\rm Ker}(A -\lambda_{\rm min}(A)I_n) \ge 2$.
Extending this, we consider the following quadratic optimizations with one additional convex quadratic constraint
$$(GP_2) \ \ \ \min\{x^TAx+a^Tx:\|x-x_0\|^2 \le \alpha,\, \|Bx\|^2+b_1^Tx \le \beta_1\}.$$
Following similar methods of proof of Section 3 and 4 and using the preceding proposition, we derive SDP relaxation and strong duality results for $(GP_1)$ under
the following dimension condition  ``${\rm dim}\big({\rm Ker}(A -\lambda_{\rm min}(A)I_n) \cap {\rm Ker}(B)\big) \ge 2$''.  However,
 it should be noted that, this dimension condition fails to be satisfied when $B$ has rank $n$ (the dimension of the underlying space). Indeed, in the case when $B$ has rank $n$, an example was
provided in \cite[{\rm Page 263 EX$_1$}]{Ye_Zhang} showing that the model $(GP_2)$ does not enjoy exact SDP relaxation as well as strong duality in general.

\begin{theorem}\label{th:999}
For problem $(GP_2)$, suppose that ${\rm dim}\big({\rm Ker}(A -\lambda_{\rm min}(A)I_n) \cap {\rm Ker}(B)\big) \ge 2$. Then, $(GP_2)$ admits exact SDP relaxation. Moreover, suppose
further that there exists $\overline{x}$ such that
$\|\overline{x}-x_0\|^2 < \alpha$ and $\|B\overline{x}\|^2+b_1^T\overline{x} < \beta_1$. Then, strong duality holds for problem $(GP_2)$, i.e.,
\begin{eqnarray*}
& & \min_{x \in \mathbb{R}^n}\{x^TAx+a^Tx:\|x-x_0\|^2 \le \alpha, \, \|Bx\|^2+b_1^Tx \le \beta_1\} \nonumber \\
&= &\max_{\lambda_0,\lambda_1 \ge 0} \min_{x \in \mathbb{R}^n}\{x^TAx+a^Tx+ \lambda_0(\|x-x_0\|^2-\alpha)+\lambda_1 (\|Bx\|^2+b_1^Tx \le \beta_1)\}.
\end{eqnarray*}
\end{theorem}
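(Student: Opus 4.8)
The plan is to follow the route of Sections 3 and 4 verbatim, with Proposition \ref{prop:2.20} playing the role that Theorem \ref{prop:2.2} plays there. First observe that $(GP_2)$ has a single linear term $b_1$, so $s={\rm dim}\,{\rm span}\{b_1\}\le 1$; hence the hypothesis ${\rm dim}\big({\rm Ker}(A-\lambda_{\rm min}(A)I_n)\cap{\rm Ker}(B)\big)\ge 2\ge s+1$ is precisely the extended dimension condition (\ref{eq:EDC}) for the system $f(x)=x^TAx+a^Tx+\gamma$, $g_0(x)=\|x-x_0\|^2-\alpha$, $g_1(x)=\|Bx\|^2+b_1^Tx-\beta_1$. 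If $A\succeq 0$ the problem is a convex quadratic program and all assertions are classical, so I may assume $\lambda_{\rm min}(A)<0$. By Proposition \ref{prop:2.20} the range set $U(f,g_0,g_1)$ is then convex, and by the same limiting argument as in Proposition \ref{prop:2.1} it is closed, since the constraint $g_0$ forces any relevant feasible sequence to stay bounded.

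\textbf{Exact SDP relaxation.} I would form the relaxation (SDRP) of $(GP_2)$ exactly as in Section 3, the only change being that the matrix attached to the convex quadratic constraint is $H_1=\left(\begin{array}{cc} B^TB & b_1/2\\ b_1^T/2 & -\beta_1\end{array}\right)$, so that $\|Bx\|^2+b_1^Tx-\beta_1={\rm Tr}(H_1X)$ whenever $X=\tilde x\tilde x^T$, $\tilde x=(x^T,1)^T$. Then, as in the proof of Theorem \ref{th:value}, the optimal value function $v(s_0,s_1)=\inf_x\{f(x)-\gamma:\ g_0(x)\le s_0,\ g_1(x)\le s_1\}$ satisfies ${\rm epi}\,v=U(f,g_0,g_1)$, hence is lower semicontinuous and convex; this gives zero duality gap between $(GP_2)$ and its Lagrangian dual $(D)$. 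Since by construction $\min(GP_2)\ge\min(SDRP)\ge\max(D)=\min(GP_2)$, the relaxation is exact. If attainment of the minimum of (SDRP) is also desired, the boundedness argument of Theorem \ref{th:value} transfers unchanged, as it only uses the block $H_0$ coming from the ball.

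\textbf{Strong duality.} Assuming the Slater point $\overline{x}$, set $\gamma=-\min(GP_2)$. Then the system $g_0(x)<0$, $g_1(x)<0$, $f(x)<0$ is inconsistent, so $0\notin{\rm int}\,U(f,g_0,g_1)$; since $U(f,g_0,g_1)$ is convex, separation (exactly as in the proof of Theorem \ref{th:global}) yields $(\mu,\lambda_0,\lambda_1)\in\mathbb{R}^3_+\setminus\{0\}$ with $\mu f(x)+\lambda_0 g_0(x)+\lambda_1 g_1(x)\ge 0$ for all $x$. Evaluating at $\overline x$ and using $g_0(\overline x),g_1(\overline x)<0$ forces $\mu>0$, so normalise $\mu=1$; evaluating at an optimal $x^*$ and using feasibility forces $\lambda_0 g_0(x^*)=\lambda_1 g_1(x^*)=0$. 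Hence $h(x):=f(x)+\lambda_0 g_0(x)+\lambda_1 g_1(x)\ge 0=h(x^*)$, so $\min_x h(x)=h(x^*)=-\gamma=\min(GP_2)$, and together with weak duality this gives $\max(D)=\min(GP_2)$ with the maximum attained at $(\lambda_0,\lambda_1)$. This can equivalently be repackaged, as in Theorem \ref{th:global}, as a necessary and sufficient global optimality condition of the form ``KKT $+$ complementary slackness $+$ $A+\lambda_0 I_n+\lambda_1 B^TB\succeq 0$'', the second-order block now absorbing the (convex) quadratic constraint.

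\textbf{Where the work is.} The only genuinely new ingredient is Proposition \ref{prop:2.20}: in its ``attainment on the sphere'' step the perturbation direction $v$ must lie not merely in $\big(\bigcap_i b_i^\perp\big)\cap{\rm Ker}(A-\lambda_{\rm min}(A)I_n)$ but also in ${\rm Ker}(B)$, so that $\|Bx(t)\|^2$ is constant along $x(t)=x^*+tv$ and $g_1$ is unaffected by the perturbation; this is exactly what (\ref{eq:EDC}) provides. Once Proposition \ref{prop:2.20} is available, the remainder is a routine transcription of Sections 3--4, and the single point requiring care is writing $H_1$ with the $B^TB$ block rather than a zero block; I do not expect any obstacle beyond this bookkeeping.
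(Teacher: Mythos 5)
Your proposal is correct and follows essentially the same route as the paper, which itself proves Theorem 6.1 by invoking Proposition 6.1 for convexity of $U(f,g_0,g_1)$ and then repeating the arguments of Theorem 3.1 (value-function convexity plus the sandwich $\min \ge \min(SDRP) \ge \max(D)$) and of Theorem 4.1/Corollary 4.1 (separation at a Slater point). You have correctly identified the only genuinely new ingredient — that the perturbation direction must additionally lie in ${\rm Ker}(B)$ — and the bookkeeping change in $H_1$, so there is nothing to add.
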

\begin{proof}
From Proposition \ref{prop:2.20} and the assumption ${\rm dim}\big({\rm Ker}(A -\lambda_{\rm min}(A)I_n) \cap {\rm Ker}(B)\big) \ge 2$, we see that $U(f,g_0,g_1)$ is convex where
$f(x)=x^TAx+a^Tx$, $g_0(x)=\|x-x_0\|^2-\alpha$ and $g_1(x)=\|Bx\|^2+b_1^Tx-\beta_1$. So, the first conclusion can be proved following similar line of argument as in Theorem \ref{th:value} while the second conclusion can be proved following similar line argument as in Theorem \ref{th:global} and
Corollary \ref{th:strong_duality}.
\end{proof}
\begin{remark}
{\rm A careful examination of the proof of above theorem shows that the conclusion of Theorem \ref{th:999} continues to hold for the quadratic problem $ \min\{x^TAx+a^Tx:\|x-x_0\|^2 \le \alpha,\, \|Bx\|^2+b_i^Tx \le \beta_i, i=1,\ldots,l\}$ under the condition ``${\rm dim}\big({\rm Ker}(A -\lambda_{\rm min}(A)I_n) \cap {\rm Ker}(B)\big) \ge l+1$". For simplicity, we only considered $(GP_2)$ with two constraints.}
\end{remark}

In the special case of ($GP_2$), where $B$ is the zero matrix, the preceding theorem 
reduces to \cite[Theorem 3.6]{beck-eldar} (see Corollary \ref{cor:1}).

The following example illustrates that Theorem \ref{th:999} can be applied to some cases where $B$ is not a zero matrix.
\begin{example}
 Consider the following quadratic minimization problem
\begin{eqnarray*}
(P) & \min&   -x_1^2-x_2^2-x_3^2-2x_1 \\
& \mbox{ s.t. } & x_1^2+x_2^2+x_3^2+x_1 \le 1, \\
& & x_1^2+x_1 \le 0.
\end{eqnarray*}
This quadratic problem can be written as $(GP_2)$ with $f(x)=x^TAx+a^Tx$ with $A=-I_{3}$ and $a=(-2,0,0)$, $g_0(x)=\|x-x_0\|^2-\alpha$ with $x_0=(-\frac{1}{2},0,0)$, $\alpha=\frac{5}{4}$, and $g_1(x)=\|Bx\|^2+b_1^Tx-\beta_1$ with  $B=\left(\begin{array}{ccc}                                                                                         1 & 0                                                                                                          & 0 \\
0 & 0 & 0\\
0 & 0 & 0                                                                                                      \end{array}\right)
$, $b_1=(1,0,0)$ and $\beta_1=0$. One could verify that the strict feasibility condition is satisfied at $\overline{x}=(-\frac{1}{2},0,0)^T$ and
\[
  {\rm dim}\big({\rm Ker}(A -\lambda_{\rm min}(A)I_n) \cap {\rm Ker}(B)\big)= 2.
\]


Next, we show that strong duality and exact SDP relaxation hold. To see this, we note that, for any feasible point $x=(x_1,x_2,x_3)$, we have $-x_2^2-x_3^2 \ge x_1^2+x_1-1$ and $-1 \le x_1 \le 0$, and hence,
\[
-x_1^2-x_2^2-x_3^2-2x_1 \ge  -x_1-1 \ge -1.
\]
So, it can be easily seen that the optimal value of (P) is $-1$ and $(0,1,0)$ is a global minimizer. Let $\lambda_0=1$ and $\lambda_1=1$. Then,
\[
\min\{f(x)+\lambda_0g_0(x)+\lambda_1g_1(x)\}=\min\{x_1^2-1\}=-1=\min(P).
\]
So, the inequalities $\max(D) \le \min(SDRP) \le \min (P)$ imply that the strong duality and exact SDP relaxation hold.
\end{example}

Finally, we note that our approach and results in the present work suggest that the exact SDP-relaxation and strong duality may extend to multi-variate
 polynomial problems with a norm constraint and linear inequalities under an appropriate dimension condition. Moreover, it would be interesting to
 examine further potential applications of strong duality to robust optimization problems.  These will be our future research direction and will be examined in a forthcoming study.


\section*{Appendix: Technical Results}
For the sake of self-containment, in this Section, we provide known technical results on hidden convexity of quadratic systems, S-lemma and tractable classes of robust optimization.
\subsection*{Hidden Convexity of Quadratic Systems}
The basic and probably the most useful result on the joint-range
convexity of homogeneous quadratic functions, known as Dine's Theorem \cite{Dine}, states as follows:
\begin{lemma}{\bf (Dine's Theorem) \cite{Dine}}
Let  $A_1,
A_2\in S^n$. Then, the set
$\{(x^TA_1x,x^TA_2x):x \in\mathbb{R}^n\}$ is
convex.
\end{lemma}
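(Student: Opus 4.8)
I would reduce the statement to the two-dimensional case by restricting to the plane spanned by two given vectors, and then settle the low-dimensional cases by hand. To begin, note that $W:=\{(x^TA_1x,x^TA_2x):x\in\mathbb{R}^n\}$ is a cone: scaling $x$ by $t$ scales the corresponding point by $t^{2}\ge 0$, so $tW\subseteq W$ for every $t\ge 0$. Consequently, to prove that $W$ is convex it is enough to show that any two points of $W$ can be joined by a line segment lying in $W$.

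So I would fix $u=(x^TA_1x,x^TA_2x)$ and $v=(y^TA_1y,y^TA_2y)$ in $W$ and set $L:={\rm span}\{x,y\}$, a subspace with ${\rm dim}\,L\le 2$. If $P$ is an $n\times({\rm dim}\,L)$ matrix whose columns form an orthonormal basis of $L$, then $z\in L$ means $z=Pw$, and $z^TA_kz=w^{T}(P^TA_kP)w$; hence the slice
\[
W_L:=\{(z^TA_1z,z^TA_2z):z\in L\}
\]
is exactly the range set attached to the pair of symmetric matrices $P^TA_1P,\,P^TA_2P$ in dimension ${\rm dim}\,L\le 2$. Since $x,y\in L$, we have $u,v\in W_L\subseteq W$, so it suffices to prove that $W_L$ is convex.

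This reduces everything to the one- and two-dimensional base cases, which I expect to be the only real work. If ${\rm dim}\,L\le 1$ then $W_L=\{t\,c:t\ge 0\}$ for a fixed $c\in\mathbb{R}^2$, a ray (or $\{0\}$), hence convex. If ${\rm dim}\,L=2$, write $w=r(\cos\theta,\sin\theta)$ and use the double-angle identities to get $w^{T}\widehat{A}_kw=r^{2}\bigl(\alpha_k+\beta_k\cos 2\theta+\gamma_k\sin 2\theta\bigr)$ with constants depending on $\widehat{A}_k:=P^TA_kP$; writing $t=r^{2}\ge 0$ and $\phi=2\theta$,
\[
W_L=\bigl\{\,t\bigl((\alpha_1,\alpha_2)+\cos\phi\,(\beta_1,\beta_2)+\sin\phi\,(\gamma_1,\gamma_2)\bigr)\ :\ t\ge 0,\ \phi\in[0,2\pi)\,\bigr\}=\mathbb{R}_{+}E,
\]
where $E$ is the (possibly degenerate) ellipse swept out by $\phi$ and $\mathbb{R}_{+}E:=\{te:t\ge 0,\ e\in E\}$. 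Letting $K$ be the compact convex set bounded by $E$ (a filled ellipse, or a segment, or a point in degenerate cases), I would check that $\mathbb{R}_{+}E=\mathbb{R}_{+}K$: the inclusion $\subseteq$ is trivial since $E\subseteq K$, and for $\supseteq$ every nonzero $k\in K$ lies on the ray through a suitable point of $E$ (through every point of $E$ when $K$ contains the origin in its interior). Finally $\mathbb{R}_{+}K$ is the cone generated by a convex set, and such a cone is convex because $t_1p_1+t_2p_2=(t_1+t_2)\bigl(\tfrac{t_1}{t_1+t_2}p_1+\tfrac{t_2}{t_1+t_2}p_2\bigr)$ whenever $p_1,p_2\in K$ and $t_1,t_2\ge 0$ are not both zero. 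Thus $W_L$ is convex, and therefore so is $W$.

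The only delicate point is the classification of the degenerate two-dimensional cases — the ellipse collapsing to a segment or a point, or passing through the origin — where one simply reinterprets $K$ as the appropriate lower-dimensional convex set; none of these causes trouble. I should add that there is a shorter alternative more in the spirit of the rest of the paper: identify $W$ with $\{({\rm Tr}(A_1X),{\rm Tr}(A_2X)):X\succeq 0,\ {\rm rank}(X)\le 1\}$ and observe the rank bound is redundant by a minimal-rank argument — if a positive semidefinite matrix meeting the two trace conditions had rank $r\ge 2$, writing it as $LL^{T}$ one finds a nonzero symmetric $S$ with ${\rm Tr}(A_kLSL^{T})=0$ (two linear conditions on the $\binom{r+1}{2}\ge 3$-dimensional space of symmetric $r\times r$ matrices), and then $L(I_r+tS)L^{T}$ lowers the rank for a suitable $t$ while keeping both traces fixed, contradicting minimality — so that $W$ coincides with the convex linear image $\{({\rm Tr}(A_1X),{\rm Tr}(A_2X)):X\succeq 0\}$ of the positive semidefinite cone.
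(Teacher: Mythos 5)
Your argument is correct, but there is nothing in the paper to compare it against: Dine's theorem is quoted in the Appendix purely for self-containment, with a citation to \cite{Dine} and no proof. Taken on its own terms, your main route is a complete and essentially elementary proof. The key reduction --- any two points of the joint range arise from vectors $x,y$ spanning a subspace $L$ with $\dim L\le 2$, so convexity need only be verified for the restricted pair $P^TA_1P,\,P^TA_2P$ in dimension at most two --- is valid, and the two-dimensional case is settled correctly by the double-angle parametrization, which exhibits the range as $\mathbb{R}_+E$ for a (possibly degenerate) ellipse $E$ and then as the convex cone $\mathbb{R}_+K$ over its convex hull $K$; the only delicate points (that a ray from the origin through a point of $K$ meets $E$ itself, and the collapsed cases where $E$ is a segment or a point) are the ones you flag, and they are handled. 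Incidentally, the cone property of $W$ is not actually needed for the reduction, since joining two arbitrary points by a segment is just the definition of convexity. Your alternative via rank reduction on $\{(\mathrm{Tr}(A_1X),\mathrm{Tr}(A_2X)):X\succeq 0\}$ is also sound --- two trace conditions are too few linear constraints to force rank $\ge 2$ on a minimal-rank PSD solution --- and it is the argument most consonant with the SDP-relaxation machinery of Sections 3--5, where the redundancy of the rank-one constraint is precisely the phenomenon being studied. Either argument would serve as a self-contained proof of the lemma.
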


Dine's theorem is known to fail for  three homogeneous in general.   Polyak \cite{Polyak} established  the following
joint-range convexity result for three homogeneous quadratic
functions under a positive definite condition on the matrices
involved.

\begin{lemma} \label{lemma:Polyak}{\bf (Polyak's Lemma \cite[Theorem 2.1]{Polyak})}
Let $n \ge 2$ and let  $A_1,
A_2,A_3 \in S^n$. Suppose that there exist
$\gamma_1,\gamma_2,\gamma_3 \in \mathbb{R}$ such that $\gamma_1 A_1+\gamma_2 A_2+\gamma_3A_3 \succ 0.$  Then
the set $\{(x^TA_1x,x^TA_2x,x^TA_3x):x \in\mathbb{R}^n\}$ is convex.
\end{lemma}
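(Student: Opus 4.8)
The plan is to reduce, via two convexity-preserving normalisations, to the case where the positive definite combination is the identity matrix, and then to read off the conclusion from the classical joint-range result for two quadratic forms on the unit sphere. Write $F:=\{(x^{T}A_{1}x,x^{T}A_{2}x,x^{T}A_{3}x):x\in\mathbb{R}^{n}\}\subseteq\mathbb{R}^{3}$; since $x\mapsto tx$ with $t\ge 0$ scales every coordinate by $t^{2}$, the set $F$ is a cone. Convexity of $F$ is unaffected (i) when an invertible linear map $L:\mathbb{R}^{3}\to\mathbb{R}^{3}$ is applied to it, because $L(F)=\{(x^{T}B_{1}x,x^{T}B_{2}x,x^{T}B_{3}x):x\in\mathbb{R}^{n}\}$ with each $B_{j}$ a fixed linear combination of $A_{1},A_{2},A_{3}$; and (ii) under the substitution $A_{j}\mapsto P^{T}A_{j}P$ for a fixed invertible $P$, which leaves $F$ literally unchanged since it merely reparametrises it through $x\mapsto Px$. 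Picking $L$ whose last coordinate functional is $(s_{1},s_{2},s_{3})\mapsto\gamma_{1}s_{1}+\gamma_{2}s_{2}+\gamma_{3}s_{3}$, I may assume the third matrix equals $\gamma_{1}A_{1}+\gamma_{2}A_{2}+\gamma_{3}A_{3}\succ 0$; then taking $P=(\gamma_{1}A_{1}+\gamma_{2}A_{2}+\gamma_{3}A_{3})^{-1/2}$ in (ii), I may assume outright that $A_{3}=I_{n}$, with $A_{1},A_{2}$ now arbitrary symmetric matrices.

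It then remains to prove that $F=\{(x^{T}A_{1}x,\;x^{T}A_{2}x,\;\|x\|^{2}):x\in\mathbb{R}^{n}\}$ is convex. Let $W:=\{(u^{T}A_{1}u,\;u^{T}A_{2}u):u\in\mathbb{R}^{n},\ \|u\|=1\}\subseteq\mathbb{R}^{2}$, the slice of $F$ at height $1$. Because $F$ is a cone and $x=0$ is the only point with $\|x\|^{2}=0$, one has $F=\{(tp,t):t\ge 0,\ p\in W\}$, and I would record the elementary fact that a set of this shape is convex as soon as $W$ is: a convex combination with weight $\lambda$ of $(t_{1}p_{1},t_{1})$ and $(t_{2}p_{2},t_{2})$, with $p_{i}\in W$, equals $(t\bar p,t)$, where $t=\lambda t_{1}+(1-\lambda)t_{2}$ and, when $t>0$, $\bar p=\tfrac{\lambda t_{1}}{t}\,p_{1}+\tfrac{(1-\lambda)t_{2}}{t}\,p_{2}\in W$, while $t=0$ forces the origin. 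Hence $F$ is convex.

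Everything therefore reduces to the convexity of $W$, the joint range of two real quadratic forms on the unit sphere of $\mathbb{R}^{n}$; this is the only genuinely non-routine step, and it is here that the hypothesis on $n$ is used. I would appeal to the classical theorem of Brickman that this joint range is convex; alternatively one can argue directly, using that $W$ is connected as a continuous image of the sphere, together with the Lagrange-multiplier observation that every boundary point of $W$ is attained at an eigenvector of some $c_{1}A_{1}+c_{2}A_{2}$, which pins down the local structure of $\partial W$ and, combined with connectedness, rules out non-convexities. Granting convexity of $W$, the two normalisations of the first paragraph return the convexity of the original set $F$, completing the proof; the whole difficulty is thus concentrated in this two-form sphere statement, which I expect to be the main obstacle.
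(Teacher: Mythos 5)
The paper does not actually prove this lemma: it is quoted in the Appendix as a known result of Polyak, so there is no in-paper argument to compare yours against. Your reduction is the standard (and essentially Polyak's own) route: an invertible linear change of coordinates in the range to replace the third form by the positive definite combination, a congruence by $P=(\gamma_1A_1+\gamma_2A_2+\gamma_3A_3)^{-1/2}$ to normalise it to $I_n$, and then the observation that the resulting joint range is the cone over its slice $W$ at $\|x\|^2=1$, so that everything reduces to convexity of the joint range of two quadratic forms on the unit sphere. Each of those steps is carried out correctly, including the check that convexity is preserved under the two normalisations and the cone-over-a-convex-base computation.

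The one genuine problem is the final appeal to Brickman's theorem, which is where the dimension hypothesis enters. Brickman's theorem gives convexity of $\{(u^TA_1u,u^TA_2u):\|u\|=1\}$ only for $n\ge 3$; for $n=2$ this set is in general an ellipse \emph{curve}, hence not convex. Consequently your argument proves the lemma only for $n\ge 3$ --- and in fact the statement as transcribed in the paper, with $n\ge 2$, is false. Take $n=2$, $A_1=\mathrm{diag}(1,-1)$, $A_2=\left(\begin{smallmatrix}0&1\\1&0\end{smallmatrix}\right)$, $A_3=I_2$ (so $A_3\succ 0$ certifies the hypothesis); writing $x=r(\cos\theta,\sin\theta)$ the joint range is $\{(s\cos\phi,s\sin\phi,s):s\ge 0,\ \phi\in[0,2\pi)\}$, the boundary surface of a circular cone, which contains $(1,0,1)$ and $(-1,0,1)$ but not their midpoint $(0,0,1)$. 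Polyak's Theorem 2.1 assumes $n\ge 3$, and the ``$n\ge 2$'' here appears to be a typo in the paper rather than a defect in your proof; you should simply invoke Brickman under the correct hypothesis $n\ge 3$ and note that $n=2$ must be excluded. Your sketched alternative to Brickman (connectedness of $W$ plus a Lagrange-multiplier analysis of $\partial W$) is too thin to stand on its own as written, but since you primarily rely on Brickman this does not affect the assessment.
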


\subsection*{S-lemma and Approximate S-lemma}
Using Dine's Theorem, Yakubovich (cf \cite{S_lemma}) obtained the
following fundamental ${S}$-lemma which has played a key role in
many areas of control and optimization.
\begin{lemma}\label{th:S_lemma} {\bf (${S}$-lemma \cite{S_lemma})} Let $A_1,A_2 \in S^{n}$, $a_1,a_2 \in \mathbb{R}^n$ and $\alpha_1,\alpha_2 \in \mathbb{R}$. Suppose that there exists $x_0 \in \mathbb{R}^n$ such that $x_0^TA_2x_0+a_2^Tx_0+\alpha_2<0$. Then the
following statements are equivalent:\\
{\rm (i)} $x^TA_2x+a_2^Tx+\alpha_2 \le 0 \Rightarrow x^TA_1x+a_1^Tx+\alpha_1 \ge 0$ ; \\
{\rm (ii)} $(\exists \lambda \ge 0)$ $(\forall x \in \mathbb{R}^n)$
$ (x^TA_1x+a_1^Tx+\alpha_1 )+\lambda(x^TA_2x+a_2^Tx+\alpha_2 ) \ge 0.$
\end{lemma}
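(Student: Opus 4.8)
The implication (ii) $\Rightarrow$ (i) is immediate: writing $f(x)=x^TA_1x+a_1^Tx+\alpha_1$ and $g(x)=x^TA_2x+a_2^Tx+\alpha_2$, if $f(x)+\lambda g(x)\ge 0$ on $\mathbb{R}^n$ for some $\lambda\ge 0$ and $g(x)\le 0$, then $f(x)\ge-\lambda g(x)\ge 0$. The plan for the converse (i) $\Rightarrow$ (ii) is the classical homogenization argument built on Dine's Theorem (the first Lemma of this Appendix).

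First I would homogenize. Put $\tilde A_i=\begin{pmatrix}A_i & a_i/2\\ a_i^T/2 & \alpha_i\end{pmatrix}\in S^{n+1}$ for $i=1,2$, so that $f(x)=(x^T,1)\,\tilde A_1\,(x^T,1)^T$ and $g(x)=(x^T,1)\,\tilde A_2\,(x^T,1)^T$, and introduce the homogeneous quadratics $p(x,t)=x^TA_1x+t\,a_1^Tx+\alpha_1t^2$ and $q(x,t)=x^TA_2x+t\,a_2^Tx+\alpha_2t^2$ on $\mathbb{R}^n\times\mathbb{R}$. Dine's Theorem, applied in $\mathbb{R}^{n+1}$, gives that
\[
W:=\{(p(x,t),q(x,t)):(x,t)\in\mathbb{R}^n\times\mathbb{R}\}
\]
is a convex subset of $\mathbb{R}^2$.

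The core step is to show $W$ is disjoint from the open set $C:=\{(u,v)\in\mathbb{R}^2:u<0,\ v<0\}$. For $(x,t)$ with $t\neq 0$, homogeneity of degree two together with $t^2>0$ lets me rescale to $t=1$, where $p(x,1)=f(x)$ and $q(x,1)=g(x)$; then (i) says $q(x,1)\le 0\Rightarrow p(x,1)\ge 0$, so $(p(x,t),q(x,t))\notin C$. For $t=0$ I must exclude the existence of $x$ with $x^TA_1x<0$ and $x^TA_2x<0$: letting $x_0$ be the Slater point with $g(x_0)<0$, the functions $s\mapsto g(x_0+sx)$ and $s\mapsto f(x_0+sx)$ are one-variable quadratics with strictly negative leading coefficients $x^TA_2x$ and $x^TA_1x$, hence both tend to $-\infty$, so for large $s$ one gets a point with $g<0$ and $f<0$, contradicting (i). Thus $W\cap C=\emptyset$. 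I expect this $t=0$ case --- ruling out a ``bad'' recession direction --- to be the main obstacle, and it is exactly where the Slater hypothesis is indispensable; without it the lemma fails, as the pair $f(x)=x$, $g(x)=x^2$ discussed in Section 4 shows.

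To finish I would separate. Since $W$ is convex, $C$ is open and convex, $W\cap C=\emptyset$, and $(0,0)=(p(0,0),q(0,0))$ lies in $W$ and in $\overline{C}$, any separating hyperplane for $W$ and $C$ passes through the origin: there is $(\mu_1,\mu_2)\neq(0,0)$ with
\[
\mu_1 p(x,t)+\mu_2 q(x,t)\ge 0\qquad\text{for all }(x,t)\in\mathbb{R}^n\times\mathbb{R},
\]
while $\mu_1 u+\mu_2 v\le 0$ on $C$; letting one coordinate in the latter tend to $-\infty$ forces $\mu_1\ge 0$ and $\mu_2\ge 0$. If $\mu_1=0$ then $\mu_2>0$ and $q(x,t)\ge 0$ for all $(x,t)$, so evaluating at $(x_0,1)$ gives $g(x_0)\ge 0$, contradicting the Slater condition; hence $\mu_1>0$. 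Dividing by $\mu_1$, setting $\lambda:=\mu_2/\mu_1\ge 0$ and putting $t=1$ yields $f(x)+\lambda g(x)\ge 0$ for all $x\in\mathbb{R}^n$, which is statement (ii).
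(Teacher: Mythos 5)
The paper states this $S$-lemma in the Appendix as a known result cited from the literature and gives no proof of its own, only noting that it is obtained ``using Dine's Theorem''; your argument is precisely that classical route (homogenize, apply Dine's Theorem in $\mathbb{R}^{n+1}$, show the joint range misses the open negative quadrant, separate through the origin, and use the Slater point to force $\mu_1>0$), and every step is correct. The only quibble is expository: the Slater hypothesis is genuinely indispensable at the normalization step $\mu_1>0$ rather than in the $t=0$ case (where any base point would serve), as your own example $f(x)=x$, $g(x)=x^2$ in fact illustrates.
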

For a homogeneous quadratic system with multiple
convex quadratic constraints, Ben-Tal, Nemirovski and Roos \cite{BNR} derived
the following approximate S-lemma which provides an estimate between an associated quadratic optimization problem and its SDP relaxation.

\begin{lemma}{\bf (Approximate S-lemma \cite[Lemma A.6]{BNR})}
Let $R,H_0,H_1,\ldots,H_K$ be symmetric $(p \times p)$ matrices such that $H_i \succeq 0$, $i=1,\ldots,K$ and
$\sum_{k=0}^K\lambda_iH_i \succ 0,$
for some $\lambda_i \ge 0,$ $i=0,\ldots,K$. Consider the following quadratically constrained quadratic problem
\[
 (QCQ) \ \ \ \max_{y \in \mathbb{R}^p}\{y^TRy: y^TH_0y \le 1, y^TH_iy \le 1, i=1,\ldots,K\}
\]
and the semidefinite optimization problem
\[
(SDP) \ \ \ \min_{\mu_0,\ldots,\mu_K \ge 0}\{\sum_{i=0}^K \mu_i : \sum_{i=0}^K\mu_kH_k \succeq R\}.
\]
Then, $\max(QCQ) \le \min(SDP) \le \rho^2 \max(QCQ)$ where $\rho=\sqrt{2\log(6 \sum_{i=1}^K{\rm rank}H_k)}.$
\end{lemma}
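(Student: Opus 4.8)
The plan is to prove the two inequalities separately. The bound $\max(QCQ)\le\min(SDP)$ is elementary weak duality, while the approximation bound $\min(SDP)\le\rho^2\max(QCQ)$ is the substantive part and will rest on a Gaussian randomized--rounding argument. For weak duality I would take any $y$ feasible for $(QCQ)$ and any $(\mu_0,\ldots,\mu_K)$ feasible for $(SDP)$; since $\sum_{k=0}^K\mu_kH_k\succeq R$, $\mu_k\ge 0$, and $y^TH_ky\le 1$ for every $k$, I get $y^TRy\le y^T\bigl(\sum_{k=0}^K\mu_kH_k\bigr)y=\sum_{k=0}^K\mu_k(y^TH_ky)\le\sum_{k=0}^K\mu_k$. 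Taking the supremum over $y$ and the infimum over $\mu$ yields the claim. I would also record here that $(SDP)$ is exactly the conic dual of the relaxation ${\rm SDR}:=\max\{{\rm Tr}(RX):X\succeq 0,\ {\rm Tr}(H_iX)\le 1,\ i=0,\ldots,K\}$, and that the hypothesis $\sum_{k=0}^K\lambda_kH_k\succ 0$ supplies the strict feasibility needed for strong conic duality, so that $\min(SDP)={\rm SDR}$ is attained at some $X^\ast\succeq 0$ together with a dual certificate $R\preceq\sum_{k=0}^K\mu_kH_k$ with $\sum_{k=0}^K\mu_k=\min(SDP)$. This reduces the remaining task to showing ${\rm Tr}(RX^\ast)\le\rho^2\max(QCQ)$.

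The main step is to round $X^\ast$ to a (scaled) rank-one feasible point of $(QCQ)$. I would draw $\xi\sim N(0,X^\ast)$, so that $\mathbb{E}[\xi^TR\xi]={\rm Tr}(RX^\ast)=\min(SDP)$ and $\mathbb{E}[\xi^TH_i\xi]={\rm Tr}(H_iX^\ast)\le 1$ for all $i$. Introduce the good event $\Gamma=\{\xi^TH_i\xi\le\rho^2,\ i=0,\ldots,K\}$, on which the scaled vector $\xi/\rho$ is feasible for $(QCQ)$; since $y=0$ is feasible we have $\max(QCQ)\ge 0$, and on $\Gamma$ we get $\xi^TR\xi\le\rho^2\max(QCQ)$, whence $\mathbb{E}[\xi^TR\xi\,\mathbf{1}_\Gamma]\le\rho^2\max(QCQ)$. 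I would then split $\min(SDP)=\mathbb{E}[\xi^TR\xi\,\mathbf{1}_\Gamma]+\mathbb{E}[\xi^TR\xi\,\mathbf{1}_{\Gamma^c}]$ and use the dual certificate to dominate the (possibly indefinite) objective on the bad event: $\xi^TR\xi\le\sum_{k}\mu_k\,\xi^TH_k\xi$, so that $\mathbb{E}[\xi^TR\xi\,\mathbf{1}_{\Gamma^c}]\le\sum_k\mu_k\,\mathbb{E}[\xi^TH_k\xi\,\mathbf{1}_{\Gamma^c}]$. The whole scheme then closes once the truncated expectations $\mathbb{E}[\xi^TH_k\xi\,\mathbf{1}_{\Gamma^c}]$ are shown to be a small fraction of ${\rm Tr}(H_kX^\ast)\le 1$, yielding $\min(SDP)\le\rho^2\max(QCQ)+(\text{small})\cdot\min(SDP)$ and, after calibrating the absolute constant (the factor $6$), exactly $\min(SDP)\le\rho^2\max(QCQ)$.

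The technical heart, and the step I expect to be the main obstacle, is the deviation estimate that makes $\Gamma^c$ rare and its objective contribution negligible. For each $i\ge 1$ I would diagonalize $(X^\ast)^{1/2}H_i(X^\ast)^{1/2}\succeq 0$ and write $\xi^TH_i\xi=\sum_{j=1}^{r_i}\sigma_j\zeta_j^2$ with $\zeta_j$ independent standard normals, $\sigma_j\ge 0$, $\sum_j\sigma_j={\rm Tr}(H_iX^\ast)\le 1$, and $r_i={\rm rank}\,H_i$; a Laplace-transform (Bernstein-type) bound on this weighted $\chi^2$ tail controls ${\rm Prob}\{\xi^TH_i\xi>\rho^2\}$ by roughly $r_i\,e^{-\rho^2/2}$ up to an absolute constant, and the same computation bounds the truncated expectation. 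A union bound over $i=1,\ldots,K$ involving the $\sum_{i=1}^K r_i$ effective degrees of freedom is precisely what forces the choice $\rho^2=2\log\bigl(6\sum_{i=1}^K{\rm rank}\,H_k\bigr)$.

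The delicate points will be getting the absolute constants right in the $\chi^2$-type tail so that the truncated contribution is genuinely absorbed, coordinating the union bound across the $\sum_i r_i$ degrees of freedom, and handling the distinguished constraint $H_0$, which is \emph{not} assumed positive semidefinite. For the upper tail of $\xi^TH_0\xi$ I would pass to the positive part of $H_0$ (or enforce the constraint $y^TH_0y\le 1$ by an exact normalization of $\xi$ rather than by concentration), arranging the bookkeeping so that ${\rm rank}\,H_0$ never enters the estimate; this is exactly why the rank of $H_0$ is excluded from the formula for $\rho$. Once these tail and normalization estimates are in place, the decomposition above gives the desired $\rho^2$-approximation.
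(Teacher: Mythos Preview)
The paper does not supply a proof of this lemma; it is stated in the Appendix as a known technical result cited from \cite[Lemma A.6]{BNR} (Ben-Tal, Nemirovski and Roos) without argument, so there is no in-paper proof to compare your proposal against.

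For what it is worth, your outline follows the same line as the original source: weak duality for the easy inequality, strong conic duality to pass to an optimal $X^\ast$ of the SDP relaxation, Gaussian sampling $\xi\sim N(0,X^\ast)$, a chi-square tail/union bound over the ranks of $H_1,\ldots,H_K$ to control the bad event, and the dual certificate to dominate the indefinite objective on that event. Your identification of the delicate points (tuning the absolute constant, and the separate treatment of $H_0$, whose rank is deliberately excluded from $\rho$) is accurate.
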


\subsection*{Tractable Classes of Robust Optimization Problems}
The following tractable classes of robust optimization problems are known.

\subsubsection*{1. Robust least squares problems \cite{beck-eldar,robust_LS}}
Consider the following robust least squares programming problem:
\begin{eqnarray*}
(RLSP) &  \displaystyle  \min_{x \in \mathbb{R}^n}\max_{(A,a) \in \mathcal{U}} & \|Ax-a\|^2,
\end{eqnarray*}
where  $\mathcal{U}\subseteq \mathbb{R}^{k \times n} \times \mathbb{R}^{k}= \mathbb{R}^{k \times (n+1)}$, is an uncertainty set.
Then, (RLSP) can be equivalently rewritten as a semidefinite programming problem under the following two cases:
\begin{itemize}
 \item[{\rm (i)}] $\mathcal{U}$ is an ellipsoid (see \cite{robust_LS}), i.e., $\mathcal{U}=\{(A^{(0)},a^{(0)}) + \Delta: \Delta \in \mathbb{R}^{k \times (n+1)}, \|\Delta-\overline{\Delta}\|_F \le \rho\}$;
 \item[{\rm (ii)}] $k \ge 2$ and $\mathcal{U}$ is the intersection of two ellipsoids (see \cite{beck-eldar}), i.e,  
$\mathcal{U}= \{(A^{(0)},a^{(0)}) + \Delta: \Delta \in \mathbb{R}^{k \times (n+1)}, \, {\rm Tr}(\Delta B_j \Delta) \le \rho_j^2,\  j = 1,2\}$ where $B_j \in S^{n \times n}$ satisfying $\gamma_1 B_1+\gamma_2 B_2 \succ 0$ for some $\gamma_1,\gamma_2 \ge 0$.
\end{itemize}

\subsubsection*{2. Robust second-order cone programming problems \cite{Beck0,BNR}}
Consider the following robust second order cone programming problem:
\begin{eqnarray*}
(RSOCP) & \min_{x \in \mathbb{R}^n} & a^Tx  \\
& \mbox{ s.t. } & \|B_ix-b_i\| \le d_i, \ \forall (B_i,b_i) \in \mathcal{U}_i,\   i=1,\ldots,m,
\end{eqnarray*}
where  $\mathcal{U}_i\subseteq \mathbb{R}^{k_i \times n} \times \mathbb{R}^{k_i}= \mathbb{R}^{k_i \times (n+1)}$, $i=1,\ldots,m$, is an uncertainty set.
Then, (RSOCP) can be equivalently rewritten as a semidefinite programming problem under the following two cases:
\begin{itemize}
 \item[{\rm (i)}] $\mathcal{U}_i$ is an ellipsoid (see \cite{BNR}), i.e., $\mathcal{U}_i=\{(B_i^{(0)},b_i^{(0)}) + \Delta_i: \Delta_i \in \mathbb{R}^{k_i \times (n+1)}, \|\Delta_i-\overline{\Delta}_i\|_F \le \rho_i\}$;
 \item[{\rm (ii)}] $\mathcal{U}_i$ is the intersection of at
 most $k$ many ellipsoids (see  \cite{Beck0}), i.e, $k_i=k$ with $k \in \mathbb{N}$ and 
$\mathcal{U}_i= \{(B_i^{(0)},b_i^{(0)}) + \Delta: \Delta \in \mathbb{R}^{k \times (n+1)}, \, \|C_j \Delta^T\|_F^2 \le \rho_j^2,\  j = 1,\ldots,k\}$,
where $C_j \in \mathbb{R}^{(n+1) \times (n+1)}$ such that there exist $\mu_j \in \mathbb{R}$ such that $\sum_{j=1}^k\mu_jC_j^TC_j \succ 0.$
\end{itemize}

\end{document}